\newtheorem*{lemma*}{\bf Lemma}
\newtheorem*{sublemma*}{\bf Sublemma}
\newtheorem*{claim*}{\bf Claim}
\newtheorem*{complement*}{\bf Complement}
\renewcommand{\epsilon}{\varepsilon}
\newcommand{\cal}[1]{\mathcal{#1}}
\newcommand{\wt}[1]{\widetilde{#1}}
\newcommand{\setof}[2]{\big\{{#1}\,\big|\,{#2}\big\}}
\newcounter{rememberItem}
\def\bEA{\begin{eqnarray*}}
\def\eEA{\end{eqnarray*}}
\def\bEAn{\begin{eqnarray}}
\def\eEAn{\end{eqnarray}}
\def\ds{\displaystyle}
\def\tend{\longrightarrow}
\def\ds{\displaystyle}
\def\wh{\widehat}
\def\on{\operatorname}
\def\cal{\mathcal}
\def\C{{\mathbb C}}
\def\D{{\mathbb D}}
\def\N{{\mathbb N}}
\def\remark{\vskip.2cm \noindent{\bf Remark. }}
\def\endremark{\par\medskip}
\definecolor{dkgreen}{rgb}{0,0.5,0}
\newtheorem{lemma}{Lemma}
\newtheorem{theorem}{Theorem}
\newtheorem*{theorem*}{Theorem}
\newtheorem{question}{Question}
\newcommand{\ov}{\overline}
\newcommand{\AutC}{\on{Aut}(\wh\C)}
\begin{document}

\title[Tan Lei and Shishikura's example]{Tan Lei and Shishikura's example of non-mateable degree 3 polynomials without a Levy cycle}
\author{Arnaud Chéritat}
\address{Centre National de la Recherche Scientifique\\
Institut de Mathématiques de Toulouse\\
Université Paul Sabatier
118 Route de Narbonne
31062 Toulouse Cedex 9,
France}
\email{arnaud.cheritat@math.univ-toulouse.fr}
\thanks{This research was funded by the grant ANR--08--JCJC--0002 of the Agence Nationale de la Recherche.}

\abstract After giving an introduction to the procedure dubbed \emph{slow polynomial mating} and stating a conjecture relating this to other notions of polynomial mating, we show conformally correct pictures of the slow mating of two degree $3$ post critically finite polynomials introduced by Shishikura and Tan Lei as an example of a non matable pair of polynomials without a Levy cycle. The pictures show a limit for the Julia sets, which seems to be related to the Julia set of a degree $6$ rational map. We give a conjectural interpretation of this in terms of pinched spheres and show further conformal representations.
\endabstract

\maketitle

\tableofcontents

\section{Introduction} 

In \cite{TS}, Tan Lei and Shishikura gave an example of two post critically finite polynomials of degree $3$ whose ``formal'' mating has an obstruction which is not a levy cycle.

Let us recall the context:

\subsection{Context}

One thing making the discussion difficult, but also interesting, is that there are several non-equivalent definitions of polynomial matings.
Let us recall the definition of the slow variant (see \cite{M} p.~54):

\subsubsection{Definition of slow polynomial mating}

\begin{figure}%
\begin{tikzpicture}
\node at (0,0) {\includegraphics[width=11.5cm]{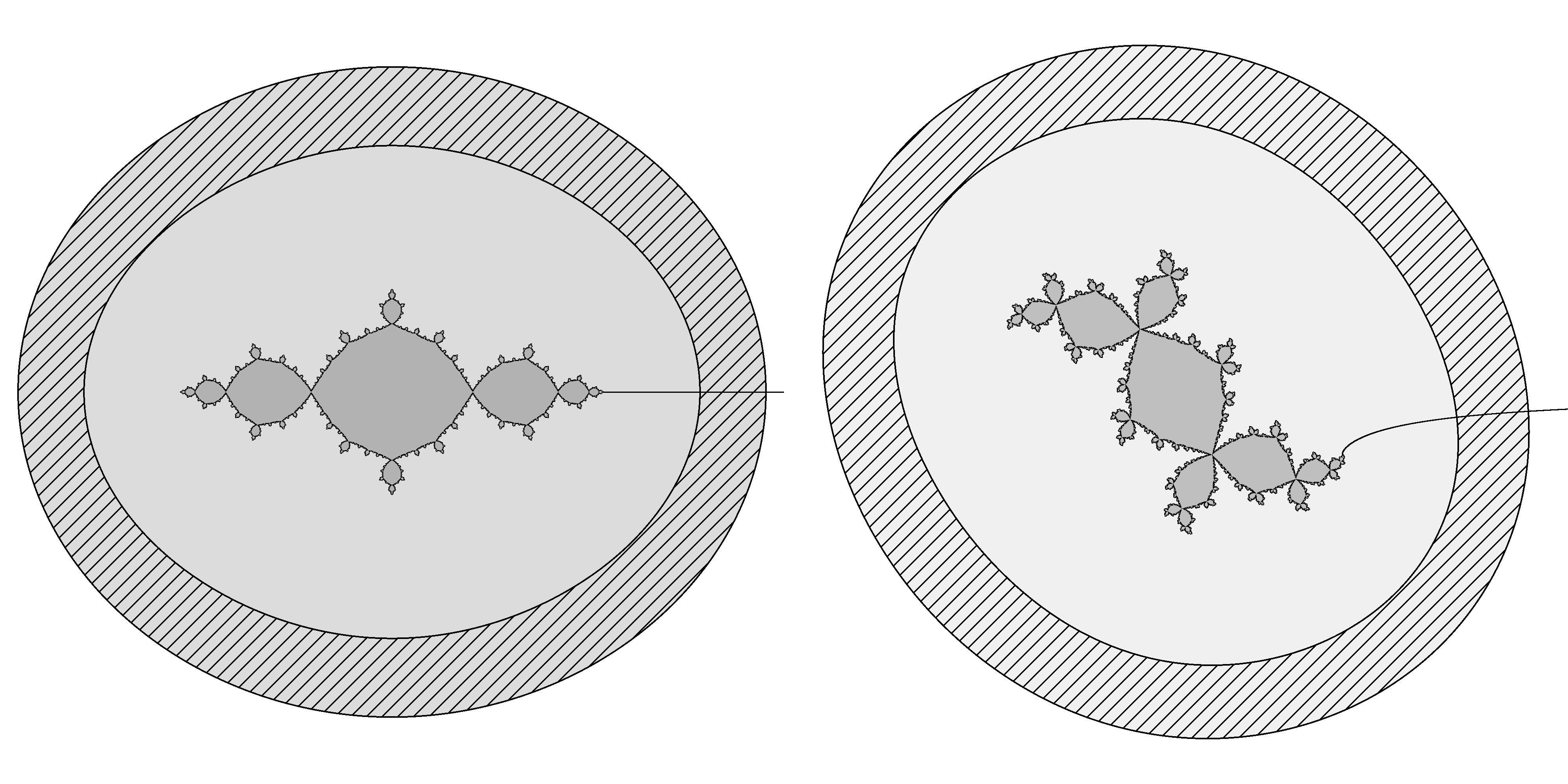}};
\draw (-5.45,-2.1) node {$U_1$};
\draw (0.75,-2.1) node {$U_2$};
\node at (0,-6.5) {\includegraphics[height=6cm]{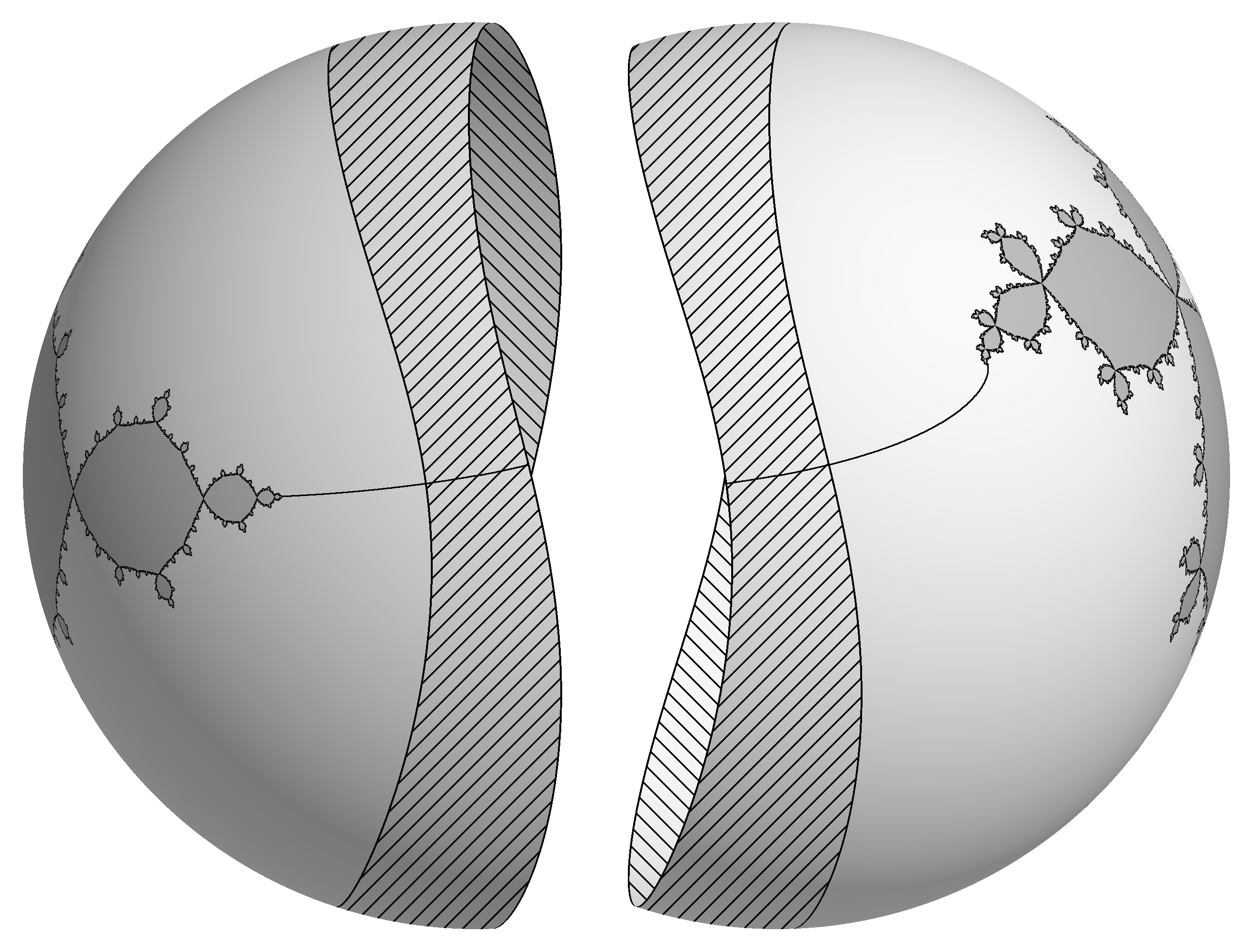}};
\end{tikzpicture}
\caption{Example of construction of the Riemann surface $\cal S_R$. The images depict the Julia sets nicknamed the Basilica and the Rabbit. Some value of $R$ has been chosen and the domain bounded by the equipotential $P'>\log R$ is indicated by shades of gray. The respective external rays of argument $0$ have also been drawn. The parts of $U_i$ that get glued together are hatched. In the image below we presented the two charts facing each other in a 3D space so that points glued together are close.}
\label{fig:sr1}
\end{figure}

\begin{figure}%
\begin{tikzpicture}[>=latex]
\node at (0,0) {\includegraphics[width=12cm]{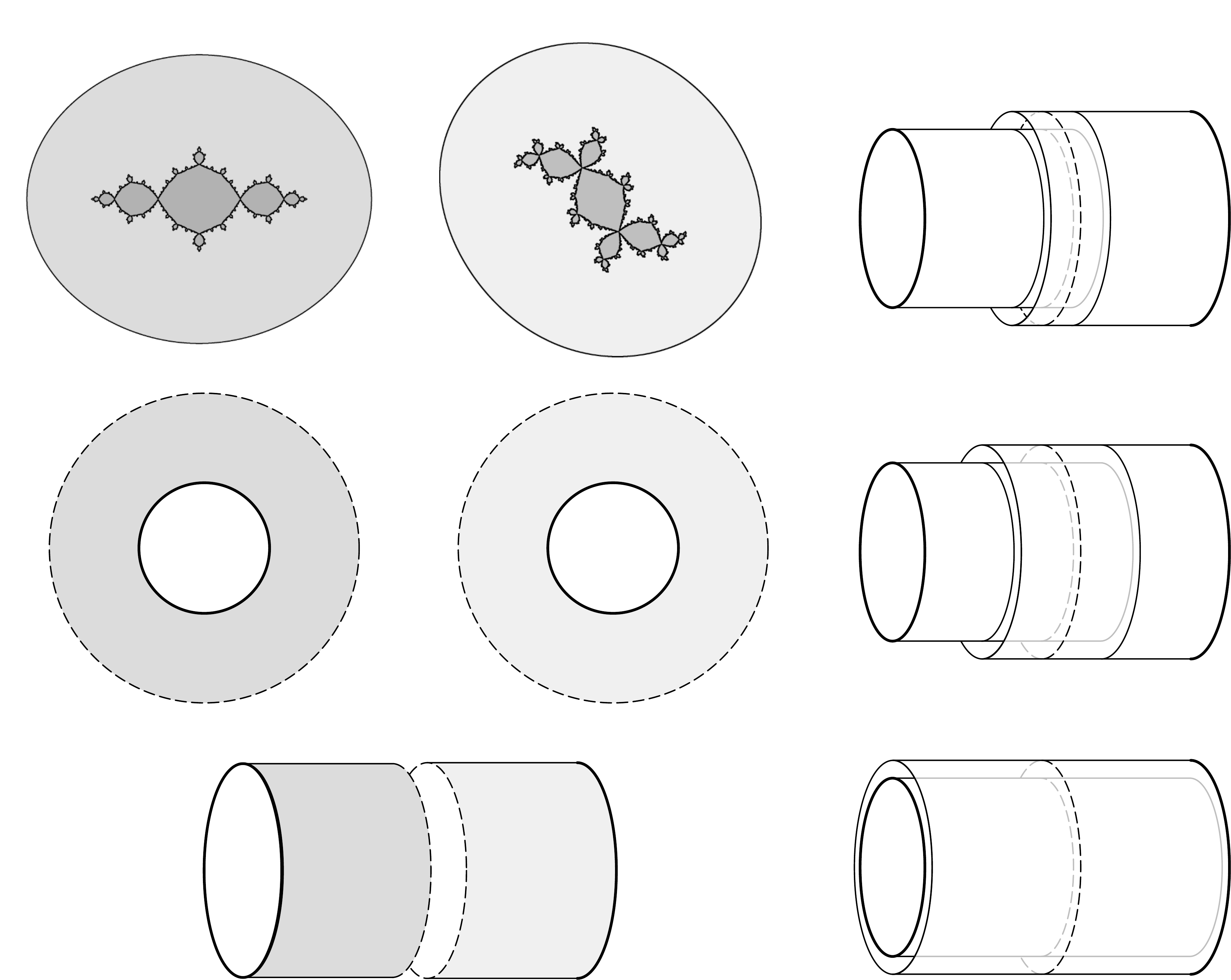}};
\draw[->] (-4,1.7)  -- node[left] {$\phi_1$} (-4,0.5);
\draw[->] (0,1.7)  -- node[left] {$\phi_2$} (0,0.5);
\draw[->] (-3.4,-1.6)  -- node[right] {$\log$} (-2.7,-3);
\draw[->] (-0.3,-1.6)  -- node[left] {$-\log$} (-1,-3);
\draw (-2.5,1.4) node[anchor=north west,inner sep=0mm] {$U_1(R)$} -- (-2.9,1.8);
\draw (1.55,1.4) node[anchor=north west,inner sep=0mm] {$U_2(R)$} -- (1.15,1.8);
\end{tikzpicture}%
\caption{Left: gluing $U_1(R)$ and $U_2(R)$ is gluing their Böttcher coordinates along the circle $|z|=R$ by an inversion. Better, it is gluing the logarithms of the Böttcher coordinates with an isometry. Right: in fact to define a Riemann surface we take slightly larger domains so that open sets are glued together. This depends on another parameter $P'$ (with $P<P'\leq 2P$) which determines the overlap. However the resulting surfaces are the same (canonically isomorphic).
}
\label{fig:sr_eq}
\end{figure}

Consider two polynomials $P_1$, $P_2$ of the same degree $d\geq 2$. For the construction to make sense we need that their Julia sets be connected. 

\remark No other assumption is done. In  other constructions it is either required that the Julia sets are locally connected or even stronger: that the polynomials are post-critically finite. 
\endremark

Recall that a polynomial of degree $d\geq2$ with connected Julia set has $d-1$ fixed external rays (i.e.\ of period $1$). The construction that follows depends on the choice of such an external ray for $P_1$, and of one for $P_2$. In the degree $2$ case, since there is only one fixed external ray, there is no choice.

For any real number $P>0$ let $R=e^{P}$ and let $U_i(R)$ be the simply connected open subset of $\C$ bounded by the equipotential of $P_i$ with potential $P$.
Bascially we would like to build a Riemann surface $\cal S_R$ by gluing the closure of $U_1(R)$ and $U_2(R)$ along their boundaries in some specific fashion. As long as the gluing is holomorphic, it is possible to get a Riemann surface this way. However we prefer to present the construction slightly differently, by gluing along open regions.

\begin{figure}%
\begin{tikzpicture}
\node at (0,0) {\includegraphics[width=11.5cm]{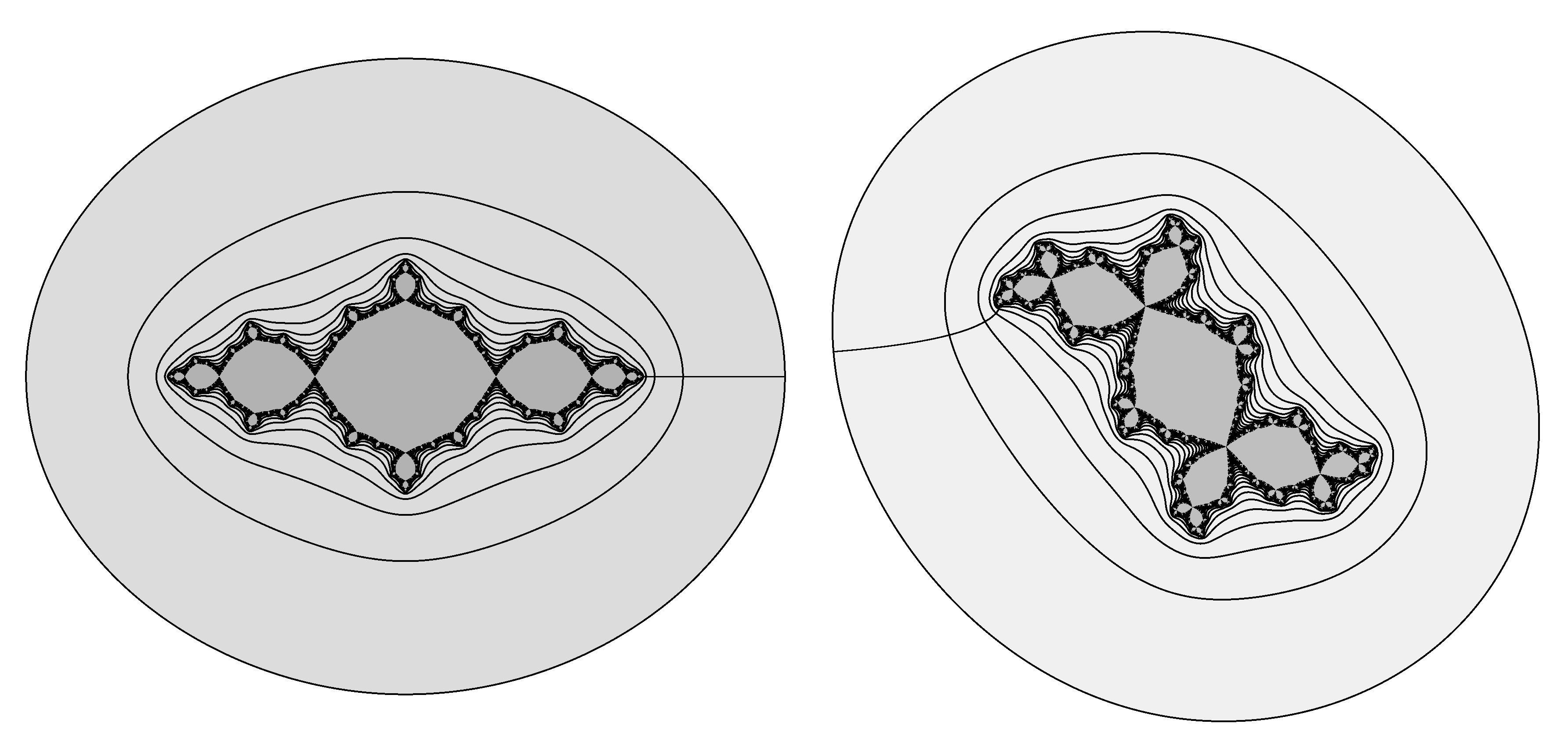}};
\end{tikzpicture}
\caption{Here we drew the domains delimited by the potential $P$ together with more equipotentials. The image on the right has been rotated 180 degrees so that now the external rays of argument $0$ are facing each other. On $\cal S_R$, the boundaries of these two domains are glued together and the point on the external ray $0$ match. On Figure~\ref{fig:sr3}, we see what these pictures become after being glued to form $\cal S_R$.}
\label{fig:sr2}
\end{figure}

So choose $P'\in\ ]P,2P]$, and let 
\[U_i = U_i(R').\]
Now let $\phi_1$ be the unique Böttcher coordinate for $P_1$ that maps the chosen fixed external ray to the interval $]\,1,+\infty\,[$,
let $\phi_2$ be the same for $P_2$ and glue $z\in U_1$ with $w\in U_2$ whenever
\[\frac{\phi_1(z)}{R}\cdot\frac{\phi_2(w)}{R}=1.\]
Note that this relation restricts to an orientation reversing bijection between the equipotentials of level $P$ of $P_1$ and $P_2$. Note also that if $z\sim w$ then their external angles\footnote{arguments of their Böttcher coordinates} are opposite. The gluing can equivalently be expressed in the coordinates $\log \phi_i$: 
\[\frac{\log \phi_1(z) + \log\phi_2(z)}{2} = P \pmod{2\pi i}.\]
It can be checked that the quotient topological space under this equivalence relation is indeed a topological manifold. Moreover, $U_1$ and $U_2$ provide an atlas for which the change of coordinates is holomorphic and thus we have defined a Riemann surface.
It does not depend on the choice of $P'\in\ ]P,2P]$ in the sense that there are canonical isomorphisms between two such surfaces (see Figure~\ref{fig:sr_eq}) and will be designated as $\cal S_R$ regardless of the value of $P'$.

\begin{figure}%
\includegraphics[width=10cm]{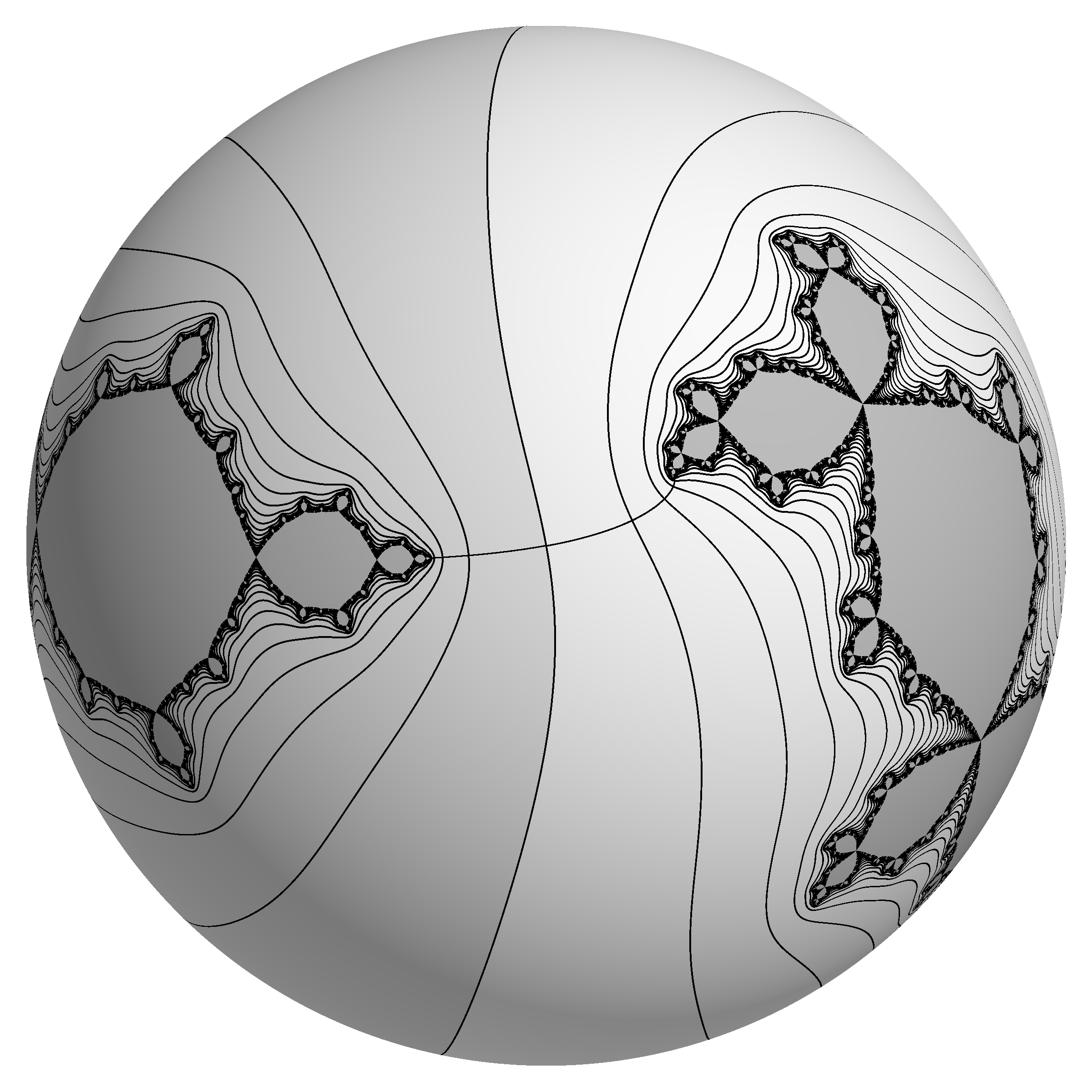}
\caption{The Riemann surface $\cal S_R$ conformally mapped to a Euclidean sphere, painted with the drawings of Figure~\ref{fig:sr2}. The method for producing such a picture is interesting and explained in another article in the present volume: it does not work by computing the conformal map, but instead by pulling-back Julia sets by a series of rational maps. It has connections with Thurston's algorithm.}
\label{fig:sr3}
\end{figure}

All these surfaces are homeomorphic to the sphere, so they are isomorphic to the Riemann sphere $\wh\C$. Recall that the latter has a group of automorphisms which is not small\footnote{It is the group of homographies a.k.a.\ Möbius transformations or projective self maps, i.e.\ those maps of the form $z\mapsto (az+b)/(cz+d)$ that are not constant. The group is also sharply $3$-transitive: given a triple of pairwise distinct points $a_1$, $a_2$, $a_3$ in $\wh\C$ an another one $b_1$, $b_2$, $b_3$ there is a unique Möbius map sending each $a_i$ to $b_i$.} so the isomorphism to $\wh\C$ is not unique. Its choice will play a determinant role when we look at degenerating cases. The choice of such an isomorphism is called a normalization.

There are several objects of interest than can be defined on $\cal S_R$. First the Julia sets of $P_1$ and $P_2$ correspond to two compact subsets of $\cal S_R$. The same holds for the filled-in Julia sets. Equipotentials and external rays are also well-defined, together with a conformal map $\Phi$ from $A$ to the round annulus $R^{-1}<|z|<R$ where $A$ is the the complement of the union of the two filled-in Julia sets:
\[\Phi: A \to \setof{z}{R^{-1}<|z|<R}.\]
Let us designate these subsets of $\cal S_R$ by $J_1(R)$, $K_1(R)$, $J_2(R)$, $K_2(R)$ and $A(R)$. The equator of $A(R)$ is the preimage of the unit circle by $\Phi$ and corresponds in either chart $U_i$ to the equipotential $\log R$.
More complicated objects can be defined. For instance the measure $\mu_R$ on the equator of $A(R)$ which is the pull-back by $\Phi$ of the uniform probability measure on the unit circle.

There is an important subtlety about the transfer to $\cal S_R$ of the dynamical system. It turns out that $P_1$ together with $P_2$ define a holomorphic map from $\cal S_R$ to $\cal S_{R^d}$, \emph{not} to $\cal S_R$:
\[\cal S_R\overset{F_R}{\longrightarrow}\cal S_{R^d}.\]
To explain this,
let us recall that $R=e^P$ and $P'\in]P,2P]$. Let $R'=e^{P'}$. Then $P_i$ maps $U_i(R')$ to $U_i(R'^d)$ and if $z\in U_i\setminus K(P_i)$ then the Böttcher coordinate satisfies $\phi_i(P_i(z))=\phi_i(z)^d$. Hence if the product $\phi_1(z)\phi_2(w)=R^{2}$ then $\phi_1(P_1(z))\phi_2(P_2(z))=R^{2d}$.
\footnote{In the degree $2$ case it would be tempting to try to define a map from $\cal S_R$ to itself by deciding to map each half-annulus to the whole annulus. However it would be discontinuous at the equator.}
If isomorphisms from $\cal S_R$ and $\cal S_{R^d}$ to $\wh \C$ have been chosen then the holomorphic map $F_R$ becomes a \emph{rational map of degree d}.

Hence we get a one real parameter family of rational maps $F_R$ of degree $d$ between spheres with distinguished compact subsets. Note that $F_R(K_i(R))=K_i(R^d)$, and the same holds for $J_i(R)$ and $A(R)$. Equipotentials and external rays are mapped to equipotentials and external rays.

\subsubsection{Limits of slow mating?}
Now, as $R\tend 1$ by values $>1$, do these objects have limits? Are these limits compatible? It depends on the initial maps $P_1$ and $P_2$, but also on the meaning we give to having a limit and to compatibility.
One thing we could ask for is to find a normalization of the spheres $\cal S_R$ such that the maps $F_R$ tend to a rational map $F$ of the same degree (which is $d$). We would then like both sets $J_i(R)$ to tend to the Julia set of $F$. We could ask more, for instance\footnote{The author ignores if this particular requirement is implied by the previous ones.} that the equatorial measure $\mu_R$ tends to the Brolin-Lyubich measure of $F$. We could also ask for less, for instance that $F_{R_n}$ converges for some sequence $R_n\tend 1$, or even allow $F_R$ to tend to a lower degree map (convergence occuring on the complement of a finite set). The present article does not study these questions, it just shows an example that degenerates in a specific way.

By \emph{slow mating} we mean the definition of the Riemann surfaces $\cal S_R$ and the objects $J_i(R)$, \ldots, $F_R$ that we defined on or between them. We do not designate the (hypothetical) limit map $F$.
Let us stress out once more that the slow mating of two polynomials of the same degree $d>1$ is well-defined whenever both have connected Julia sets. They do not need to be post-critically finite, and the Julia sets do not even need to be locally connected. The question of the convergence of $F_R$ can be posed in the non-locally connected case and is non-trivial. If there is a limit, it is not anymore possible to link it to the \emph{topological mating},\footnote{it may also be called the \emph{instant mating} by opposition to the slow mating} obtained by gluing $K_1$ and $K_2$ along their boundaries respecting the Caratheodory loop (see \cite{R}, \cite{S}, \cite{TS}, \cite{M2}), because there is no more Caratheodory loop!

Remark: A twist can be added to the gluing: see \cite{BEK}. The real parameter $R>1$ then becomes a complex parameter $\rho = Re^{i\theta}\in\C\setminus\ov{\D}$ and we get Riemann surfaces $\cal S_\rho$, objects $J_i(\rho)$, etc\ldots and maps $F_\rho:\cal S_\rho \to\cal S_{\rho^d}$. We will not study this here.

Remark: Traditionally the mating of post-critically finite polynomials is not defined in terms of the slow mating but either with the topological mating (PCF polynomials have locally connected Julia sets), see for instance \cite{R} and \cite{S}, or in terms of corrected formal matings, see \cite{TS}. Relations between these notions have been studied but not completely yet.

\subsection{Post-critically finite polynomials}

From now on, with a few explicitly indicated exceptions, we focus on the case where $P_1$ and $P_2$ are post-critically finite, abbreviated as PCF throughout this article.

\subsubsection{About the Thurston machinery}

We recall that a PCF orientation preserving ramified self cover (continuous but not assumed holomorphic) of the (oriented) sphere is called a \emph{Thurston map}.
We assume the reader knows the definition of Thurston equivalence of Thurston maps, the definition of a Thurston map with hyperbolic orbifold, the definition of Thurston obstructions and the related existence and uniqueness theorem of rational realizations of classes of Thurston maps with hyperbolic orbifold. We do not assume the reader knows the proof, but at some point we will use Thurston's pull-back map (a.k.a.\ Thurston's algorithm). These subjects are discussed in details in \cite{DH}. Note that in the present article, we \emph{do not} require an obstruction to be a \emph{stable} multicurve.\footnote{Also called invariant multicurve. A multicurve $\Gamma$ for $f$ is called stable if all components of $f^{-1}(\Gamma)$ are either peripheral or homotopic rel.\ the post-critical set to a curve in $\Gamma$.}

\subsubsection{The marked set}

In the case where $P_1$ and $P_2$ are post-critically finite, one may map the post-critical set\footnote{the union of critical values and their orbits} of each polynomials to $\cal S_R$ (let us call $\cal P_R\subset\cal S_R$ this finite collection of points) and try to see if these points remain at some distance from each other as $R\tend 1$ of if they tend to gather in different groups. It is important to stress out that $\cal P_R$ is not\footnote{unless accidentally} the post-critical set\footnote{However, it is the union over $n>0$ of the critical values of the compositions $F_{R^{1/d}}\circ F_{R^{1/d^2}} \circ \cdots \circ F_{R^{1/d^n}}$} of some $F_{R'}$. This is why we prefer to call $\cal P_R$ the \emph{marked set}.
One may also try to guess what is the Thurston equivalence class of the limit map $F$ of the sequence $F_R$ if there is any. A candidate is provided by the so-called \emph{formal mating}, which is a Thurston map built from $P_1$ and $P_2$ (see \cite{R}, \cite{S}, \cite{TS} and \cite{M2}; we also give a description in Section~\ref{subsub:constr} of the present article). In this article, it will be called the \emph{candidate}.
However there are some cases where the map $F_R$ has a post-critically finite limit of degree $d$ but of a different Thurston class: this is not because some points in $\cal P_R$ collapse at the limit, that the degree necessarily drops. This case has been acknowledged and a way to correct the formal mating is known (see Section~\ref{subsec:topovsform}), though the author does not know if the link between corrected formal mating and limit of slow mating has been completely proved to hold.

\subsubsection{Relation between slow mating and Thurston's algorithm}\label{subsub:rel}

Recall that the set $\AutC$ of automorphisms of the Riemann sphere $\wh \C$ consists of Möbius maps.
Let $f$ be the formal mating of $P_1$ and $P_2$. It is a Thurston map defined on some oriented topological sphere $S$. In Thurston's algorithm, this sphere and the post-critical set $P_f\subset S$, serve as a marker for a \emph{Teichmüller space}: $\cal T$. We remind the reader that $\cal T$ is the set of equivalence classes, denoted $[\phi]$, of orientation preserving homeomorphisms $\phi: S\to \wh \C$ for the relation $\phi\sim\phi'$ iff $\exists \mu\in\AutC$ such that $\mu \circ \phi$ coincides with $\phi'$ on $P_f$ and is isotopic rel $P_g$ to $\phi'$ on the rest of $S$. 
The associated Thurston pullback map will be denoted $\sigma_f:\cal T\to\cal T$. Recall that for $f$ PCF with hyperbolic orbifold, $\sigma_f$ is weakly contracting\footnote{distances are not increasing} and $\sigma_f^2$ is locally strictly contracting, for some metric on $\cal T$ called the Teichmüller metric. Also, $f$ is realizable by a rational map if and only if $\sigma_f$ has a fixed point. Therefore, $f$ is realizable if and only if $\sigma_f^n([\phi])$ has a limit in $\cal T$, and this is independent of the starting point $[\phi]$.

Now, the Riemann surfaces $\cal S_R$ defined by the slow mating come with natural\footnote{unique up to isotopy rel $P_f$} markings by $S$, i.e\ maps \[\phi_R: S\to \cal S_R.\]
To specify such a marking we need to specify a construction of $S$ and $f$ first, and there is some flexibility in the choice. Even with a given choice, there is still some flexibility in the definition of $\phi_R$. In Section~\ref{subsub:constr}, we give one example of construction with the following nice property:
\[F_R\circ\phi_R = \phi_{R^d} \circ f\]
i.e. the following diagram commutes:
\[\begin{CD}
S @>\phi_{R^d}>>\cal S_{{R}^d} \\
@AfAA @AA{F_{R}}A \\
S @>\phi_{R}>> \cal S_{R}
\end{CD}\]
It has the following consequence: let $T_R$ be the point in the Teichmüller space $\cal T$ defined by $\phi_R$. Then:\footnote{Note that such a strong property as $F_R\circ\phi_R = \phi_{R^d} \circ f$ is not necessary to get $\sigma_f(T_{R^d}) = T_{R}$: a Thurston equivalence $\phi_{R^d}^{-1}\circ F_R\circ\phi_R \sim f$ would have been enough. However, it is as easy to directly get a $\phi_R$ satisfying the strong assumption.}
\[\sigma_f(T_{R^d}) = T_{R}.\]
In words, this says that the slow mating defines a path $R\mapsto T_R$ in the Teichmüller space, parameterized by $R\in]1,+\infty[$, and that Thurston's pullback map associated to the formal mating acts on it as the $d$-th root on the parameter.

\subsubsection{Squeezing the annulus}

In this section we do not need the polynomials $P_1$ and $P_2$ to be PCF, we just require them to have connected Julia sets. Given two values $R,R'$ it is quite natural to define the following non-holomorphic map:\footnote{Note the inversion: it will make equations described later easier to handle; it is regrettable but almost imposed on us by the fact that the composition notation $f\circ g$ reads backwards.}
\[ \Psi_{R',R}: \cal S_R \to \cal S_{R'}\]
defined as folows: map $z\in K_1$ in the chart $U_1(R)$ to the point of $\cal S_{R'}$ with the same coordinate $z$ in $U_1(R')$. Do the same for $K_2$. Map a point in the annulus $A(R)$ to the point on the same external ray but with potential multiplied by $\log R'/\log R$ (the potential may be measured in either the Böttcher coordinates of $P_1$ or that of $P_2$, or also with $\log |\Phi|$ that assigns $0$ to the equator, where $\Phi$ is the isomorphism from $A(R)$ to $1/R<|z|<R$; in all cases this gives the same result). This map is continuous and better: it is quasiconformal.

These maps are compatible with the dynamics:
\[F_{R'}\circ \Psi_{R',R} = \Psi_{{R'}^d,R^d} \circ F_R\]
i.e. the following diagram commutes:
\[\begin{CD}
\cal S_{R^d} @>\Psi_{{R'}^d,R^d}>>\cal S_{{R'}^d} \\
@A{F_R}AA @AA{F_{R'}}A \\
\cal S_{R} @>\Psi_{R',R}>> \cal S_{R'}
\end{CD}\]
and
\[\Psi_{R'',R} = \Psi_{R'',R'} \circ \Psi_{R',R}.\]
On the annuli $A(R)$,  $A(R')$, \ldots, these two identities basically follow from the fact that $F$ and $\Psi$ act as multiplications on the potential and on the external angle, and multiplications commute.

\subsubsection{A construction of the formal mating and of markings of $\cal S_R$.}\label{subsub:constr}

In fact we'll give two. In this section we do not need the polynomials $P_1$ and $P_2$ to be PCF, we just require them to have connected Julia sets.

The quick and dirty way consists in choosing a particular value of $R$, say $R=\exp(1)=e$ and using $S=\cal S_e$, $f=\Psi_{e,e^d} \circ F_e$, and letting the marking be: $\phi_R = \Psi_{R,e}$. The claims of Section~\ref{subsub:rel} follow at once.

For the second construction, map $\C$ to the unit disk by the non-conformal homeomorphism $\psi: z\mapsto z/(1+|z|)$. Let $S$ be the quotient of the disjoint union of two copies $\ov D_1$ and $\ov D_2$ of the closed unit disk, glued along their boundaries with $e^{i\theta}\in \partial D_1$ identified with $e^{-i\theta}\in \partial D_2$. We will identify $\ov D_1$ and $\ov D_2$ with their image in $S$. We first define a modification of $P_1$, call it $\wt P_1$ as follows: $\wt P_1=P_1$ on $K_1$ and for $z\in\C\setminus P_1$, $\wt P_1 (z)$ is the point of $\C\setminus K_1$ with the same potential as $z$, but with external angle multiplied by $d$. The map $\wt P_2$ is defined similarly. Then let $f$ be the conjugate of $\wt P_1$ by $\psi$ on $D_1$, the conjugate of $\wt P_2$ by $\psi$ on $D_2$ an be the multiplication by $d$ of the argument on the circle bounding $D_1$ and $D_2$.
Now for $z\in S$ let $\phi_R(z)$ be defined as follows. If $z\in D_1$ then let $w=\psi^{-1}(z)\in\C$; if $w\in K(P_1)$ then let $\phi_R(z)$ be the point of coordinates $w$ the chart $U_1$; if $w\notin K(P_1)$ then let $V$ be the potential of $w$ (it can be any real number in $]0,+\infty[$ since $w$ can be any complex number in $\C\setminus K(P_1)$) and let $\phi_R(z)$ be the point in the chart $U_1(R)$ whose coordinate is the point on the same external ray of $P_1$ as $w$, but with potential $\log(R)V/(1+V)$.

\begin{remark} In both cases we chose the markings so that
\[\phi_{R'} = \Psi_{R',R}\circ\phi_R.\]
It was not necessary. Note also that, as far as the Thurston characterization of rational functions is concerned, the particular dynamics of $f$ is not relevant, only its Thurston equivalence class is. Other constructions exist of $f$, for instance one where the map $f$ is $C^\infty$ on a $C^\infty$ sphere, and has an attracting equator. In this case, the marking cannot satisfy both $\phi_{R'} = \Psi_{R',R}\circ\phi_R$ and $\phi_{R^d}^{-1}\circ F_R\circ\phi_R = f$, but they still satisfy the following weaker form: $\phi_{R^d}^{-1}\circ F_R\circ\phi_R$ and $f$ are isotopic by an isotopy constant on $K_1$ and $K_2$. In particular they are Thurston equivalent if $P_1$ and $P_2$ are PCF.
\end{remark}

\subsection{Topological mating vs formal mating}\label{subsec:topovsform}

It may happen that the some ray equivalence classes contain several marked or critical points. Then the topological mating cannot be Thurston equivalent to the formal mating: the formal mating has to be corrected. Rees, Shishikura and Tan Lei have devised at least three ways of doing it, which probably have been proven equivalent (though the author could not find a written proof): see \cite{R}, \cite{S} and \cite{TS}. As far as the author understood, the corrected formal mating exists and is realizable (Thurston-equivalent to a rational map) if and only if the topological mating is conjugated to a rational map, and then the two rational maps are Thurston-equivalent (thus conjugated by a Möbius map if they are not flexible Lattès maps).

Even in degree $2$, there are pairs of post-critically finite polynomials which have an obstructed formal mating but still have a topological mating conjugated to a rational map: for instance the mating, studied in \cite{M2}, of $P=z^2+c$ with itself where $c$ is at the end of the Mandelbrot set external ray of argument $1/4$. The post-critical set of $P_1=P_2=P$ has three elements: $P(c),P^2(c),P^3(c)=P^4(c)$. The ray equivalence identifies $P_1^2(c)$ with $P_2^2(c)$ and $P_1^3(c)$ with $P_2^3(c)$, but no other point in the post-critical set of $P_1$ and $P_2$. Their topological mating is conjugate to the non-flexible Lattès map associated to multiplication by $1+i$ on the square lattice.

Note that the slow mating is a realization of the Thurston algorithm for the formal mating but not for the corrected formal mating. In the case of a mating that needs and has a correction, it is likely that under the slow mating, the marked points belonging to a same ray class will get closer and closer and tend to a single point\footnote{In the sense that they will be separated from the other marked points by an annulus of modulus tending to $+\infty$ separating them from the others, or equivalently by a geodesic of length tending to $0$, for the hyperbolic metric on the complement of the marked points: see Section~\ref{subsub:aboutpinch}}, and that the maps $F_R$ will converge to a map of the same degree (i.e.\ without loss of degree). We do not know if it has been proved completely.

\subsection{Levy cycles}

In his thesis \cite{L}, Silvio Levy proved the following theorem. Let $f$ be a post-critically finite topological ramified self-cover of the sphere:
\begin{theorem}[Levy]
If $f$ has degree $2$ then it has a Thurston obstruction if and only if it has a Levy cycle.
\end{theorem}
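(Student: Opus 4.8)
The plan is to prove the two implications separately; only one of them uses that the degree is~$2$.

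\emph{Levy cycle $\Rightarrow$ obstruction (any degree).} If $\{\gamma_0,\dots,\gamma_{n-1}\}$ (indices mod $n$) is a Levy cycle, it is in particular a multicurve, and in its Thurston linear map $f_\Gamma$ the entry indexed by $\gamma_i$ and $\gamma_{i+1}$ is $\geq 1$, because by definition $\gamma_i$ is isotopic rel the post-critical set to a degree-one component of $f^{-1}(\gamma_{i+1})$, which already contributes $1/1$ to that entry. Thus $f_\Gamma$ dominates entrywise the transition matrix of a single oriented $n$-cycle with all weights $1$, whose spectral radius is $1$; by monotonicity of the spectral radius on non-negative matrices, $\lambda(f_\Gamma)\geq 1$, so $\Gamma$ is an obstruction. (No use of $\deg f=2$, and since we do not require obstructions to be stable, nothing more is needed.)

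\emph{Obstruction $\Rightarrow$ Levy cycle (degree $2$).} First reduce to a \emph{minimal} obstruction $\Gamma$ (one with the fewest curves). A standard Perron--Frobenius argument on a block–triangular form of $f_\Gamma$ shows that a minimal obstruction has irreducible transition matrix, still with $\lambda(f_\Gamma)\geq 1$; one may moreover assume $\Gamma$ has at least two curves (a one-curve minimal obstruction is immediately a Levy $1$-cycle, since a single degree-two preimage would give matrix entry $\tfrac12<1$), so $f_\Gamma$ has no zero column and every curve of $\Gamma$ has an essential preimage component isotopic to a curve of $\Gamma$. Next use the degree-two dichotomy: $f$ has exactly two critical values $v_1,v_2$, and for a simple closed curve $\gamma$ avoiding them, either $\gamma$ separates $v_1$ from $v_2$, in which case $f^{-1}(\gamma)$ is a single curve mapped with degree $2$, or it does not, in which case $f^{-1}(\gamma)$ is a pair of disjoint curves each mapped homeomorphically, cobounding an annulus containing both critical points. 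Hence every column of $f_\Gamma$ is either $\tfrac12$ times one basis vector (the curves separating $v_1,v_2$) or a sum of at most two basis vectors (the others).

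\emph{Main step.} Let $\Gamma_s\subseteq\Gamma$ be the curves separating $v_1$ from $v_2$ and $\Gamma_n=\Gamma\setminus\Gamma_s$. If $\Gamma_s=\emptyset$, then every preimage component has degree one, so $f_\Gamma$ is a non-negative integer matrix; being irreducible and nonzero its graph has an oriented cycle, and any simple such cycle — visiting distinct, hence pairwise non-isotopic and non-peripheral, curves of the multicurve $\Gamma$, through degree-one transitions — is exactly a Levy cycle. It remains to exclude $\Gamma_s\neq\emptyset$. If $\Gamma_s\neq\emptyset$ then $\Gamma_n$ is a proper submulticurve (and nonempty, since an all-separating $\Gamma$ would give $\lambda(f_\Gamma)=\tfrac12$), so by minimality $\Gamma_n$ is not an obstruction, $\lambda(f_{\Gamma_n})<1$; as $f_{\Gamma_n}$ is a non-negative integer matrix this forces it to be nilpotent, i.e.\ the transition graph restricted to $\Gamma_n$ is acyclic, so every oriented cycle of the transition graph of $\Gamma$ runs through a curve of $\Gamma_s$, i.e.\ through a degree-two transition of weight $\tfrac12$. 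One must now contradict $\lambda(f_\Gamma)\geq 1$. This is where planarity is unavoidable: purely linear algebra does not forbid it (the abstract $2\times 2$ pattern with entries $2$ and $\tfrac12$ has spectral radius $1$). Instead one uses that the curves of $\Gamma_s$ are nested — they all separate the fixed pair $\{v_1,v_2\}$, hence form a linearly ordered chain of disks around $v_2$ — that their preimages are the correspondingly nested curves separating the critical \emph{points} $c_1,c_2$, and that pulling back respects this nesting; following the chain inward to a critical point and bookkeeping the marked points on either side of the curves and their preimages, together with the fact that a minimal obstruction admits no proper sub-obstruction, one shows the weights along any such cycle cannot reach spectral radius $1$. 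This topological case analysis (essentially the core of Levy's thesis) is the main obstacle; everything else above is formal.
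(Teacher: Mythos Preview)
The paper does not actually prove this theorem: it is attributed to Levy's thesis \cite{L}, and the only argument given in the text is the observation that the curves of a Levy cycle already form an obstruction, so the implication ``Levy cycle $\Rightarrow$ obstruction'' is trivial. Your treatment of that direction is correct and essentially identical to this remark.

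For the nontrivial direction, the paper offers nothing to compare against, so the relevant question is whether your sketch stands on its own. It does not, and you say so yourself: the case $\Gamma_s\neq\emptyset$ is not disposed of. You correctly identify that a purely matrix-theoretic argument cannot work (your $2\times2$ example with entries $2$ and $1/2$ is exactly the point), and you gesture at a nesting/planarity argument, but ``following the chain inward to a critical point and bookkeeping the marked points'' is a description of what one hopes to do, not an argument. This is precisely the substantive content of Levy's result, and without it the proposal is a reduction to the hard case rather than a proof. A couple of smaller points along the way also deserve care: the claim that a minimal obstruction with one curve is automatically a Levy $1$-cycle is correct in degree $2$ but your parenthetical justification only excludes the single-degree-two-preimage case and does not discuss the two-degree-one-preimages case (harmless here, since either preimage witnesses the Levy condition, but the sentence as written is a non sequitur); and the assertion that $\Gamma_n\neq\emptyset$ when $\Gamma_s\neq\emptyset$ is fine, but you should also note that you have not shown $\Gamma_n$ is $f$-invariant enough for $f_{\Gamma_n}$ to make sense as a submatrix --- it is, because preimages of non-separating curves stay non-separating only after checking where the critical values go, which again is part of the topological analysis you are deferring.

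In short: the easy half matches the paper; the hard half is an outline that stops exactly where the real work begins.
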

A Levy cycle is a numbered multicurve $\gamma_0$, $\gamma_1$, \ldots, $\gamma_n=\gamma_0$ such that for all $k$, $\gamma_{k+1}$ is isotopic rel.\ $P_f$ to a component of $f^{-1}(\gamma_k)$ on which $f$ has degree $1$.
The curves of a Levy cycle form an obstruction so one implication of the theorem is trivial. 
Note that the Levy character of a given numbered multicurve cannot be read off from its Thurston matrix.

\subsubsection{Polynomial matings and Levy cycles}

Formal matings which need and have a correction have a special class of Levy cycles called removable Levy cycles. 

Concerning matings of post-critically finite polynomials, the situation is completely understood in degree $2$:
using Levy's theorem, Tan Lei proved:
\begin{theorem}[Tan Lei]
The pairs of post-critically finite polynomials of degree $2$ whose topological mating is not equivalent to a rational map are those who belong to conjugate limbs of the Mandelbrot set. Moreover, their ray equivalence classes contain loops and the quotient topological space in the definition of the topological mating is not a sphere.
\end{theorem}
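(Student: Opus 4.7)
The plan is to use Levy's theorem as the pivotal tool. Since both polynomials are PCF of degree $2$, the formal mating $f$ is a Thurston map, and because quadratic polynomials have hyperbolic orbifold (the exceptional Chebyshev-type situation does not arise for a genuine pair of Julia sets), so does $f$. By Thurston's characterization, the topological mating will be realized by a rational map once $f$ (or a suitable correction of it) is unobstructed. Therefore I would first reduce the problem to finding Levy cycles in $f$: by Levy's theorem every obstruction in degree $2$ comes from a Levy cycle, so it suffices to characterize exactly when such a cycle can exist for a formal mating of two PCF quadratics.

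Next I would translate the existence of a Levy cycle into combinatorial data on external rays. A Levy cycle $\gamma_0,\gamma_1,\ldots,\gamma_n=\gamma_0$ in $S^2\setminus P_f$ consists of essential simple closed curves, each homotopic to a component of the preimage of the next on which $f$ has degree $1$. In the formal mating of two quadratics the post-critical set sits along the equator together with the two interiors, and I would use the fact that each $\gamma_k$ may be isotoped to a concatenation of arcs inside $K_1$, $K_2$ and short transverse segments crossing the equator. Because $f$ has degree $d=2$, a degree $1$ component of $f^{-1}(\gamma_k)$ separates the two critical values, and this forces the transverse crossings to track external rays whose angles form a cycle under a combinatorial dynamical rule compatible with angle doubling. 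The upshot is that a Levy cycle exists iff the ray equivalence relation on $P_f$ contains a nontrivial loop passing through a $\alpha$-type fixed point (a preimage of the $\alpha$-fixed point) of each polynomial.

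The core combinatorial step is then to match these loops with the limb structure of the Mandelbrot set. A PCF parameter $c_1$ in the $p/q$-limb is characterized by the landing pattern of the external rays of $P_1=z^2+c_1$ at its $\alpha$-fixed point, which are precisely at angles $k/(2^q-1)$ for $k$ in a specific orbit of doubling. Conjugate limb $(q-p)/q$ corresponds to the angles $-k/(2^q-1)$. Under the mating gluing $\theta\sim -\theta$, rays at these angles are identified in pairs that together close up into a loop through the two $\alpha$-fixed points, and by iterating under $f$ one obtains the Levy cycle. Conversely, by tracing back any loop in ray equivalence to the landing angles at a periodic repelling point and using the density/uniqueness of landing patterns, one shows the two periodic angles must be mutually negatives of the $p/q$-type pattern, forcing the parameters into conjugate limbs. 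The ``Moreover'' clause then follows immediately: the ray class containing the loop is a non-simply connected closed subset, and collapsing it in the quotient produces a space with non-trivial $H_1$, hence not a sphere.

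The main obstacle, as I see it, is the converse direction, i.e.\ ruling out Levy cycles outside the conjugate-limb case. One must show that any putative Levy cycle must originate at the $\alpha$-fixed points (as opposed to a deeper periodic cycle), and that its periodic external angles must match the $p/q$-landing pattern on both sides; this requires a careful induction exploiting the degree $1$ pullback condition, the fact that doubling acts as a bijection on each branch of the cycle, and an argument (due to Tan Lei) that the shortest separating curve representative of $\gamma_0$ forces a pair of simultaneous ``portraits'' at periodic cycles that can only coexist in conjugate limbs.
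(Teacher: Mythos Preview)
The paper does not give a proof of this theorem at all: it is stated as a known result of Tan Lei, introduced by the sentence ``using Levy's theorem, Tan Lei proved:'' and left without argument. So there is no proof in the paper to compare your proposal against; your write-up is an attempted reconstruction of Tan Lei's original argument, not of anything in this article.

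As a sketch of Tan Lei's approach your outline is in the right spirit---reduce to Levy cycles via Levy's theorem, then translate Levy cycles into loops in the ray equivalence relation, then identify those loops with the $\alpha$-fixed-point ray portraits and hence with the conjugate-limb condition---but several steps are not yet arguments. First, the orbifold remark is off: the formal mating of two PCF quadratics can have non-hyperbolic orbifold (e.g.\ when one of the polynomials is $z^2$ or $z^2-2$), and Tan Lei handles these boundary cases separately; you cannot simply wave them away. Second, the passage from ``unobstructed formal mating'' to ``topological mating equivalent to a rational map'' is exactly the delicate point the present paper flags in Section~\ref{subsec:topovsform}: one needs the Rees--Shishikura--Tan Lei machinery on corrected formal matings, not just Thurston's theorem. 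Third, and most seriously, your converse direction is only a wish: the claim that any Levy cycle must come from the $\alpha$-fixed points, and that the resulting periodic angle portraits force conjugate limbs, is the substantive combinatorial content of Tan Lei's paper and requires real work (careful analysis of which periodic ray classes can support a degree-$1$ pullback, and why this pins down the rotation number $p/q$ on both sides). What you have written for that step is a description of the desired conclusion, not a mechanism to reach it.
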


However, not only Levy's theorem has counter examples in degree $\geq 3$ but it is has counter examples among formal matings:
\begin{theorem}[Shishikura, Tan Lei, \cite{TS}]
There exist a pair of post-critically finite polynomials of degree $3$ whose formal mating is obstructed but has no Levy cycles. Moreover, the corresponding quotient topological space is homeomorphic to a sphere.
\end{theorem}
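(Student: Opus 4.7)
The plan is to exhibit an explicit pair of post-critically finite cubic polynomials, write down their formal mating as a Thurston map, produce a Thurston obstruction by an explicit multicurve computation, rule out Levy cycles by a case analysis, and finally verify via Moore's theorem that the ray-equivalence quotient is a sphere. First I would fix the two polynomials $P_1$ and $P_2$ of degree $3$ by specifying their Hubbard trees, or equivalently the external angles landing at their critical values; the specific choice of Shishikura and Tan Lei is small enough for every subsequent combinatorial verification to be done by hand. I would then describe the formal mating $f$ on the topological sphere $S = \ov{D_1} \sqcup \ov{D_2}/\!\!\sim$ from Section~\ref{subsub:constr} and enumerate the finite postcritical set $P_f \subset S$.

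To produce the obstruction I would exhibit an explicit finite collection $\Gamma = \{\gamma_1,\ldots,\gamma_k\}$ of disjoint, non-peripheral, pairwise non-homotopic simple closed curves in $S\setminus P_f$, computing the essential non-peripheral components of $f^{-1}(\gamma_i)$ together with the degree of $f$ restricted to each of them. These data yield the Thurston transition matrix $M_\Gamma$, whose leading eigenvalue is then checked to be at least $1$. This is the computation carried out in \cite{TS} and it furnishes the required Thurston obstruction.

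The main obstacle is to show that no multicurve whatsoever forms a Levy cycle for $f$. I would argue by contradiction: a Levy curve $\gamma$ must bound a disk containing at least two points of $P_f$ and admit a preimage component isotopic rel $P_f$ to some Levy curve $\gamma'$ on which $f$ has degree exactly $1$. Using the explicit ray-landing data of $P_1$ and $P_2$, one shows that for any simple closed curve $\gamma \subset S \setminus P_f$ which is neither peripheral nor inessential, every essential non-peripheral component of $f^{-1}(\gamma)$ encloses a combinatorially forced pair of critical points, so that the restriction of $f$ always has degree $2$ or $3$, never $1$. This case analysis depends crucially on the concrete combinatorics of the chosen pair and is the delicate heart of the proof; it is also exactly what distinguishes this example from its degree $2$ analogues, where Levy's theorem would force the obstruction to come from a Levy cycle.

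Finally, to see that the topological quotient arising from the formal mating is a sphere I would appeal to Moore's theorem: it suffices to check that each ray-equivalence class is a non-separating cellular continuum of $\wh\C$. Equivalently, the finite graph whose vertices are the postcritical points of $P_1$ and $P_2$ and whose edges are the pairs of external rays with opposite angles landing at them must be a forest. Yet another finite inspection of the angle data confirms that no loop arises, so every ray-equivalence class is a tree, Moore's theorem applies, and the quotient is homeomorphic to the $2$-sphere.
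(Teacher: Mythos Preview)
Your outline for the obstruction and for the sphere quotient is sound and matches what the paper sketches: the paper exhibits the same multicurve $\Gamma=\{a,b\}$, computes its Thurston matrix
\[
\begin{pmatrix} 1/2 & 1/2 \\ 1 & 0 \end{pmatrix}
\]
with leading eigenvalue $1$, and for the quotient it points (citing \cite{TS}) to exactly the Moore-type argument you describe, namely that ray-equivalence classes are trees and hence non-separating continua.

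The genuine gap is in your exclusion of Levy cycles. You propose to show that for \emph{every} non-peripheral simple closed curve $\gamma$, every essential non-peripheral component of $f^{-1}(\gamma)$ carries $f$ with degree $2$ or $3$, never $1$. This is false in the very example under discussion. Look at the Thurston matrix above: the entry $1$ in position $(b,a)$ records a component of $f^{-1}(a)$ homotopic to $b$ on which $f$ has degree exactly $1$. (You can also read this off Figure~\ref{fig:preimg} in the paper.) So degree-$1$ preimage components do occur, and your blanket argument collapses.

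What actually has to be shown---and this is what Shishikura and Tan Lei do in \cite{TS}, not the present paper---is that although individual degree-$1$ arcs exist in the pull-back graph on homotopy classes of curves, no \emph{closed cycle} of such degree-$1$ arcs exists. In the obstruction $\{a,b\}$ one sees this concretely: $b$ is a degree-$1$ preimage of $a$, but the only essential non-peripheral preimage of $b$ is homotopic to $a$ and carries degree $2$, so the potential Levy cycle $a\to b\to a$ fails to close with degree $1$ at the second step. The full argument must rule out Levy cycles built from \emph{any} curves, not just $a$ and $b$; this requires a finer combinatorial analysis of how curves separate $P_f$ and how their preimages behave, not the single degree bound you propose.
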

Having no Levy cycles, this mating is not correctable, so the topological mating of $P_1$ and $P_2$ is not conjugate to a rational map. The author believes one can prove it implies there is no way to normalize the spheres $\cal S_R$ so that the rational maps $F_R$ converge as $R\tend 1$, even for a subsequence $R_n\tend 1$. In this particular example, this is supported by the experimentations presented in Section~\ref{sec:conflim}. It is remarkable, though, that the quotient topological space is still a sphere: this is basically because the ray equivalence relation is closed and the classes contain no loop (see \cite{TS}).

\subsubsection{Description of the example of Tan Lei and Shishikura}

Their example is explicit.
Post-critically finite polynomials can be characterized, up to complex-affine conjugacy, by the augmented Hubbard trees. Recall that the augmented Hubbard tree is the Hubbard tree together with some angle information at some vertices. Remember also that to mate polynomials, we must align the external angles. This can be done by choosing which fixed external ray is the one of angle $0$ (this amounts to choosing monic centered polynomials). In terms of the combinatorial information, this means giving some enough supplementary information to distinguish it.\footnote{This is not very important, but note that for polynomials commuting with a rotation, one only needs to characterize a class of fixed external ray modulo this rotation.}

It turns out that less information is often sufficient. Here the polynomials $P_1$ and $P_2$ in the example of Tan Lei and Shishikura can be charaterized up to $\C$-affine conjugacy by the data on Figure~\ref{fig:He}. The monic centered polynomials are uniquely determined by the data on Figure~\ref{fig:Hee}.
In particular $P_1$ has critical points $x$, $y$ and a $3$-cycle $x\mapsto y\mapsto z \mapsto x$, and $P_2$ has a double critical point (thus of local degree $3$) of period $3$.

\begin{figure}%
\begin{tikzpicture}
\draw (-4,-1.2) node {$P_1$};
\draw (4,-1.2) node {$P_2$};
\node at (0,0) {\includegraphics{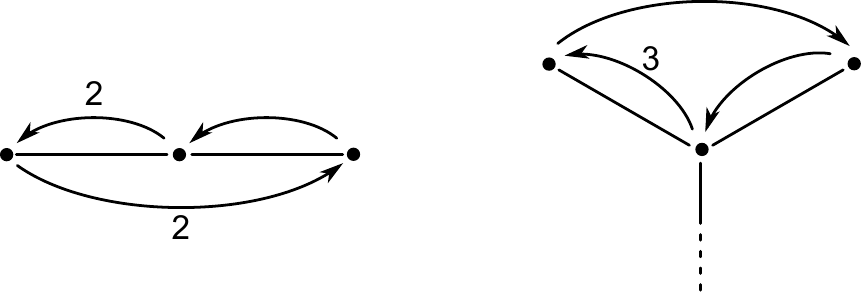}};
\end{tikzpicture}
\caption{Hubbard trees $H$ of $P_1$ and $P_2$. The number over the arrow indicates the local degree at the starting point when it is not one. The dotted line indicates how some particular edge of the first pre-image of $H$ branches on $H$.}
\label{fig:He}
\end{figure}

\begin{figure}%
\begin{tikzpicture}
\node at (0,0) {\includegraphics{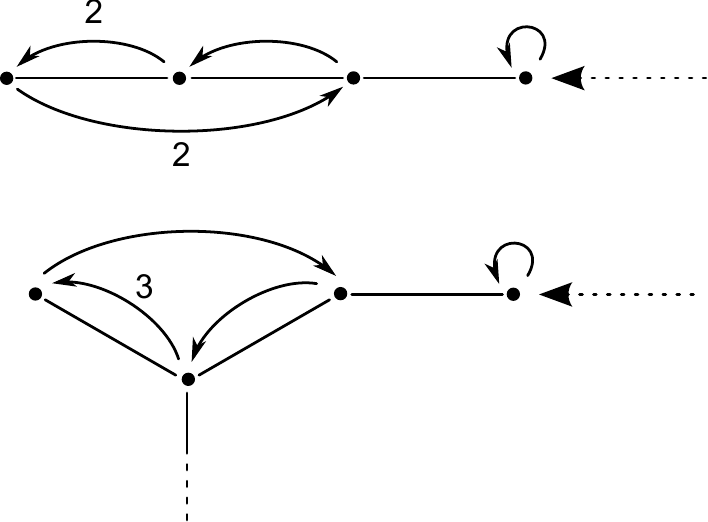}};
\end{tikzpicture}
\caption{Here we added the endpoint of the external ray $\cal R$ of angle $0$ of the each monic polynomial, together with $\cal R$ pictured in dotted lines. This data is enough, at least on these examples, to uniquely determine the monic centered polynomials $P_1$ and $P_2$. We did not indicate the other fixed external ray, which has angle $1/2$.}
\label{fig:Hee}
\end{figure}

The polynomials are
\bEA P_1 &=& z^3+az+b
\\ P_2 &=& z^3+c
\eEA
with
\bEA 
& &  a=-3x^2,\quad b=2x^3-x, \quad 32x^8-24x^6+2x^2-1=0,\quad x\approx 0.8445,
\\ & & c^8+3c^6+3c^4+c^2+1=0,\quad c\approx -0.264+1.260i.
\eEA

And their Julia sets are illustrated on Figure~\ref{fig:JP1P2}.

\begin{figure}%
\begin{tikzpicture}
\node at (0,0) {\includegraphics[width=12cm]{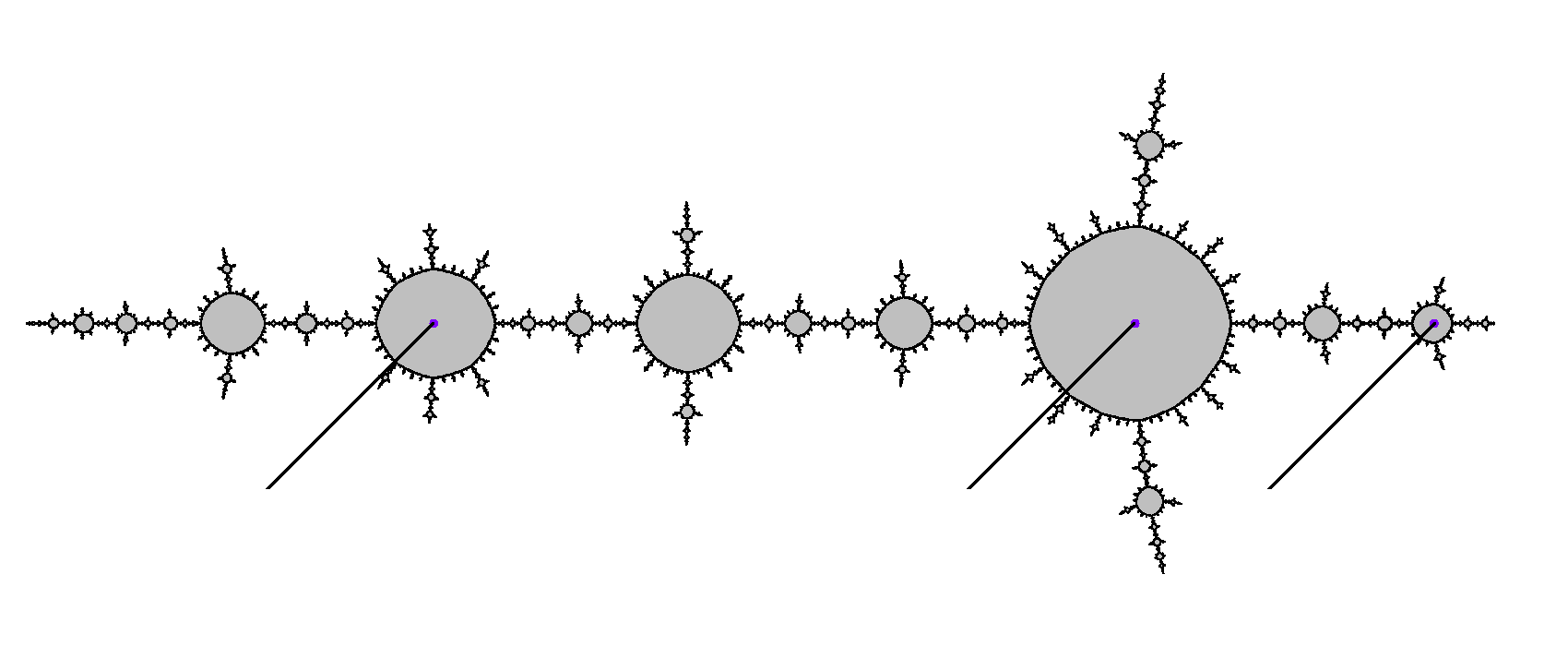}};
\node at (-4.1,-1.5) {$y$};
\node at (1.3,-1.5) {$x$};
\node at (3.6,-1.5) {$z$};
\node at (0,-8) {\includegraphics[width=12cm]{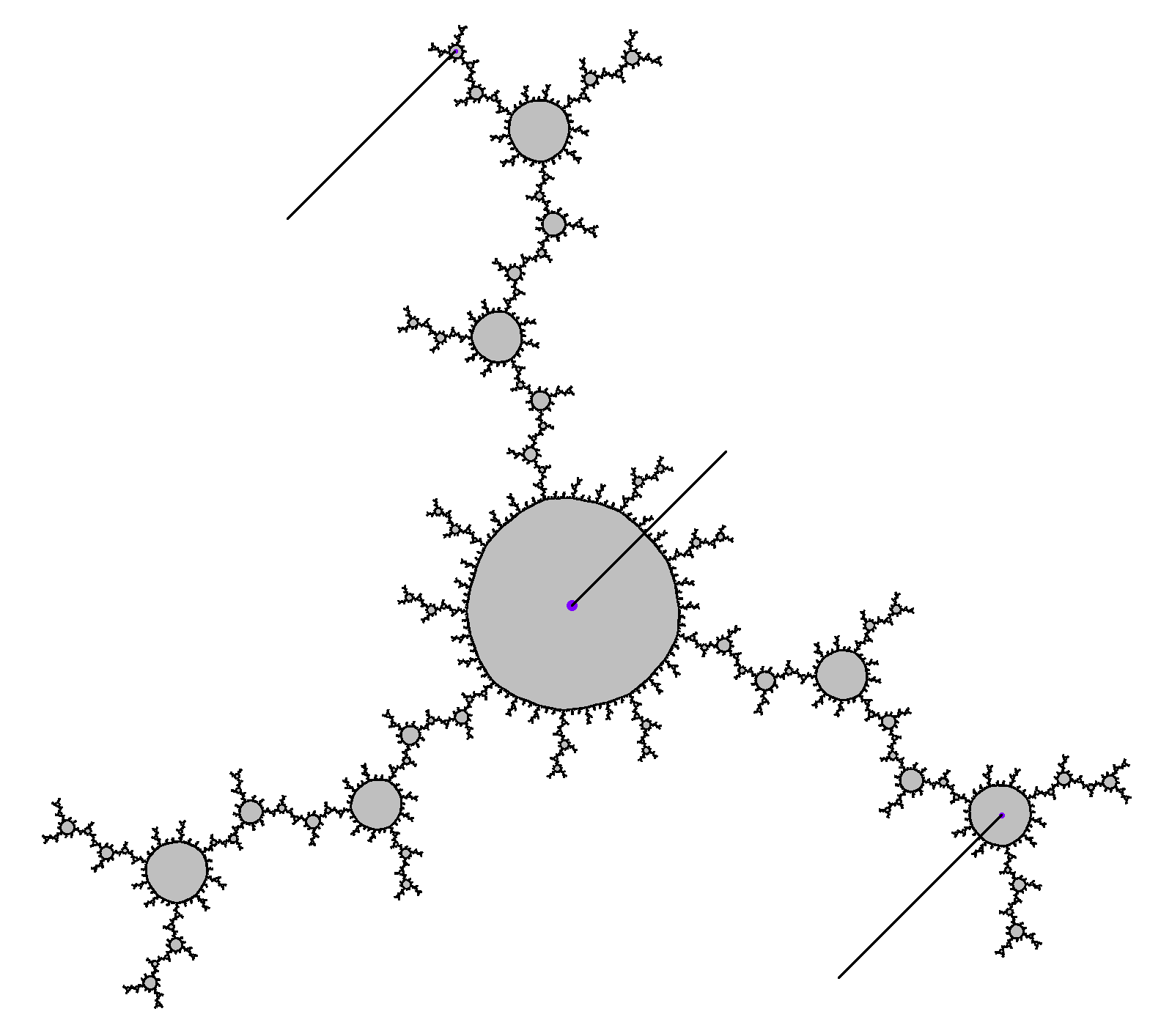}};
\node at (-2.8,-5.2) {$c=P_2(0)$};
\node at (1.9,-7.1) {$0 = P_2^3(0)$};
\node at (2.5,-13) {$P_2^2(0)$};
\end{tikzpicture}%
\caption{The Julia sets of $P_1$ and $P_2$.}
\label{fig:JP1P2}
\end{figure}

\subsubsection{An obstruction for the example}

\begin{figure}%
\begin{tikzpicture}
\node at (-3.2,1.9) {$a$};
\node at (1.4,2.0) {$b$};
\node at (0,0) {\includegraphics{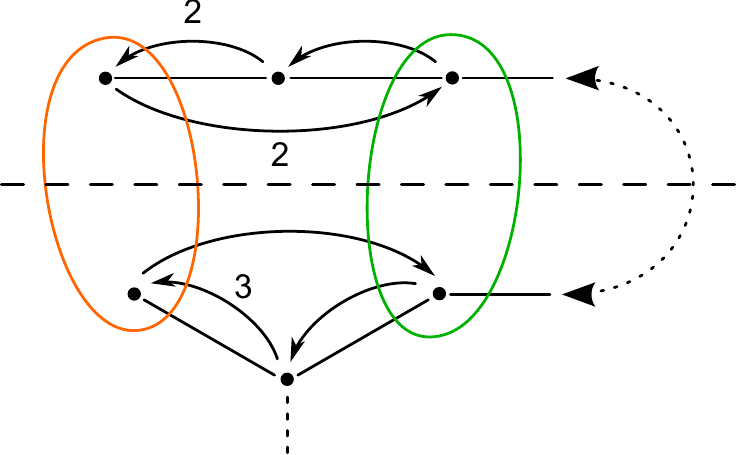}};
\end{tikzpicture}
\caption{The multicurve $\{a,b\}$ is an obstruction for the formal mating of $P_1$ and $P_2$. The dashed line represents the equator and the dotted line the external angle of argument $0$.}
\label{fig:obs}
\end{figure}

\begin{figure}%
\begin{tikzpicture}
\node at (0,0) {\includegraphics[width=10cm]{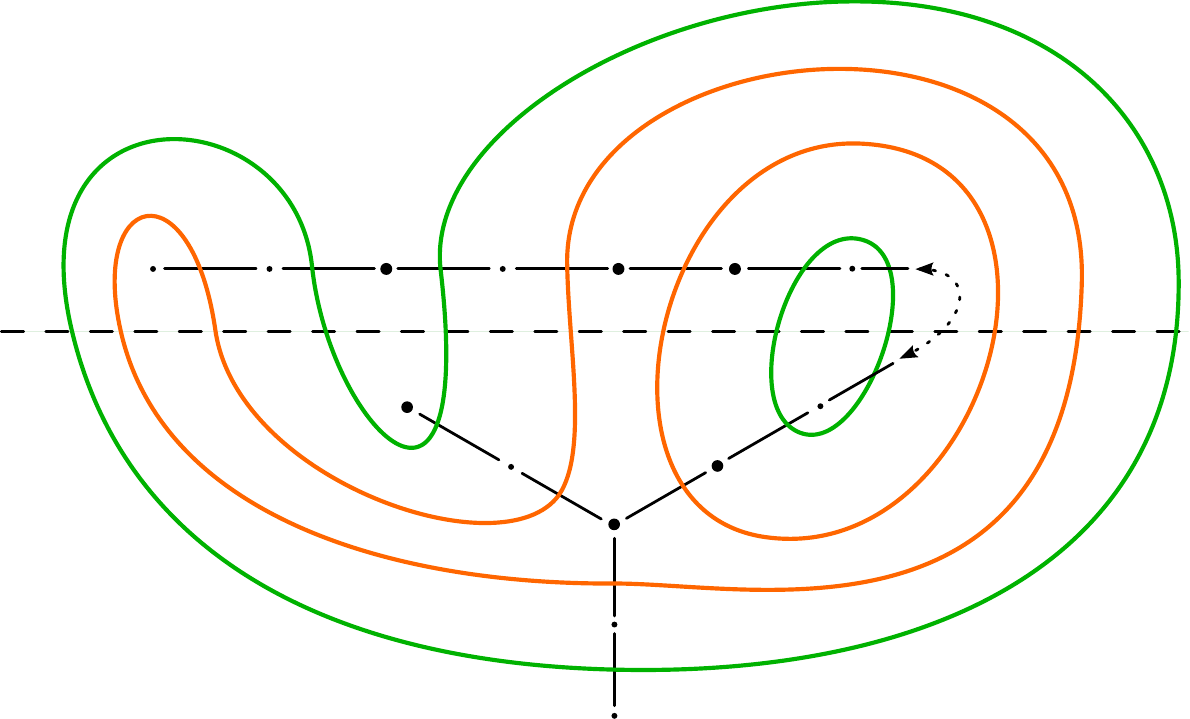}};
\end{tikzpicture}
\caption{Preimage by the formal mating of the multicurve $\{a,b\}$ and of the Hubbard trees. The external ray of angle $0$ is also indicated. Big dots represent the marked points, i.e. the post-critical set of the formal mating $f$. Here, the preimage of the big dots by $f$ is the union of the big and the small dots.}
\label{fig:preimg}
\end{figure}

The formal mating $f$ has a Thurston obstruction, a multicurve $\Gamma=\{a,b\}$, illustrated on Figure~\ref{fig:obs}. On Figure~\ref{fig:preimg} we put the preimage of $\Gamma$. From this it is easily seen that the Thurston matrix associated to $\Gamma$ in the basis $(a,b)$ is:
\[\left[\begin{matrix} 1/2 & 1/2 \\ 1 & 0
\end{matrix}\right].
\]
This matrix has spectrum $\{1/2,1\}$ therefore $\Gamma$ is indeed an obstruction.

\subsubsection{About pinching curves}\label{subsub:aboutpinch}

This section may be skipped by the reader familiar with the theory. The interested reader may look at \cite{H} or \cite{BFLSV}.

\begin{figure}%
\begin{tikzpicture}
\node at (0,0) {\includegraphics[width=8cm]{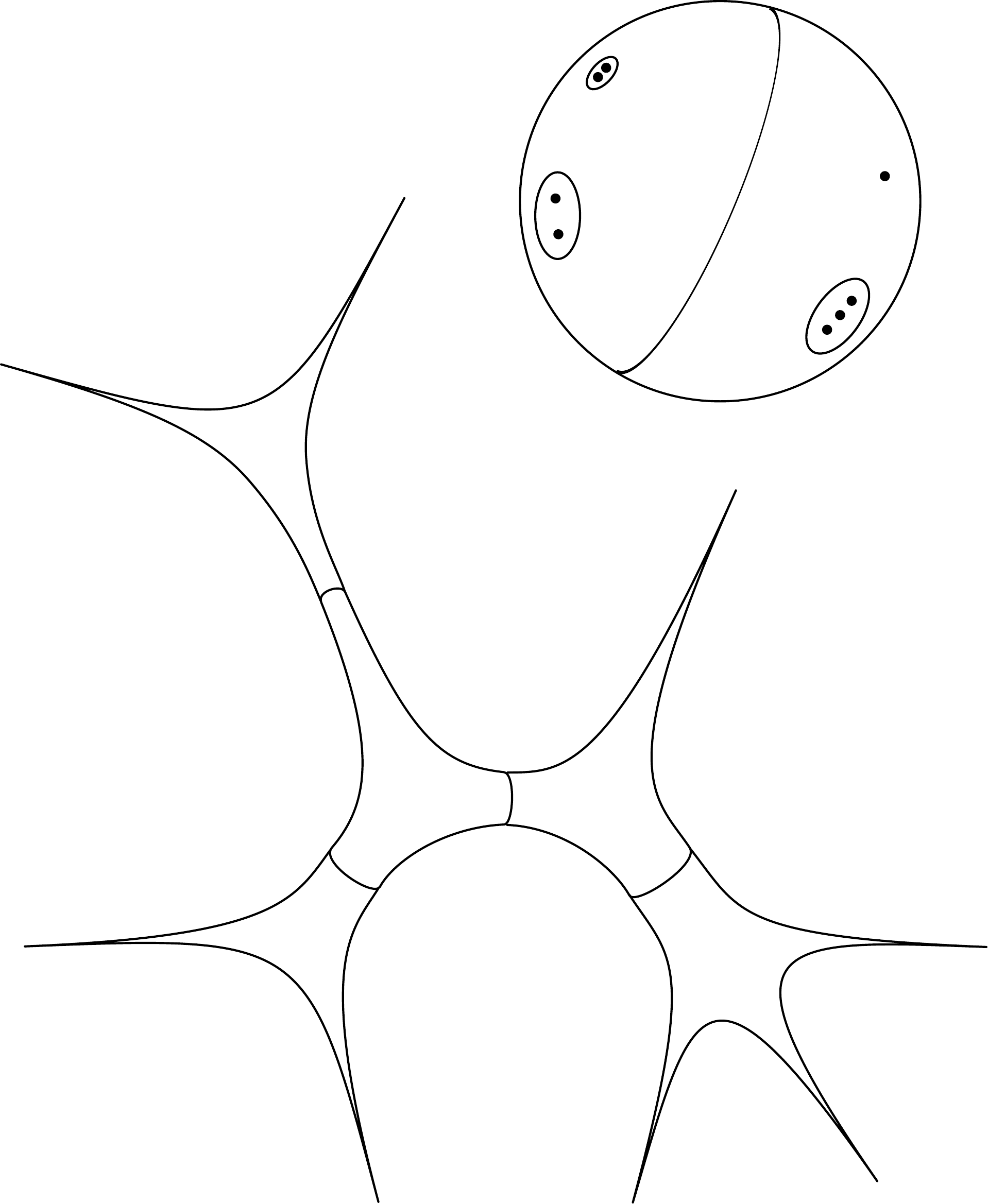}};
\end{tikzpicture}
\caption{A configuration of points on the Riemann sphere $\wh\C$, together with a multicurve, supposedly homotopic to the short geodesics. Below it, a representation of what the hyperbolic metric should look like; the 8 cusps correspond to the 8 marked points, and they are infinitely long.}
\label{fig:tree}
\end{figure}

\begin{figure}%
\begin{tikzpicture}
\node at (0,0) {\includegraphics[width=8cm]{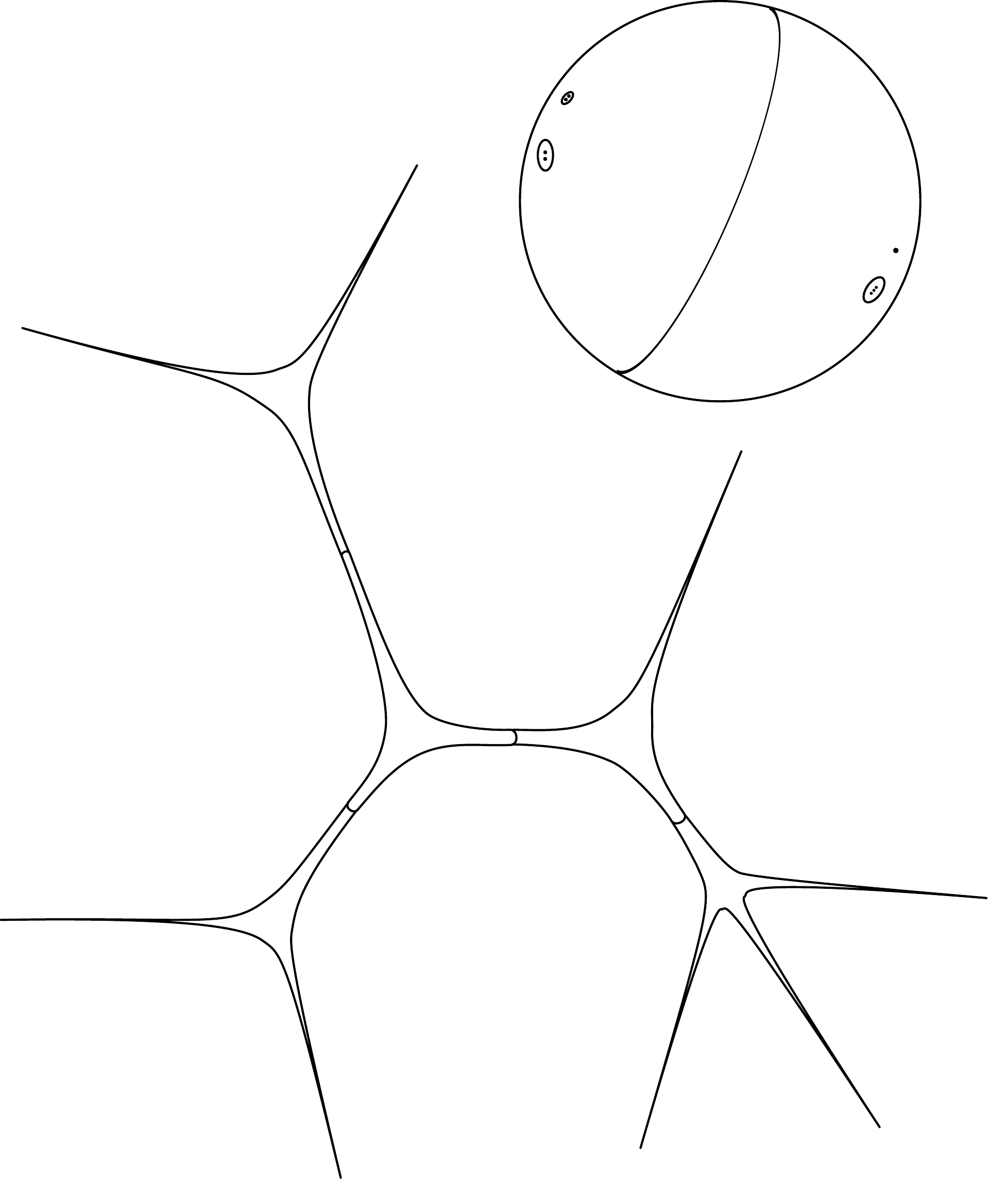}};
\end{tikzpicture}
\caption{Same object but with shorter geodesics. The hyperbolic version is represented with a bigger scale than on the previous figure. This picture and the previous one are not meant to be precise, but to give a sense of the hyperbolic metric.}
\label{fig:tree2}
\end{figure}

We denoted earlier $f: S\to S$ the formal mating and $P_f$ its postcritical set. 
Recall: a point $[\phi]$ in the Teichmüller space $\cal T=\cal T(S,P_f)$ is an equivalence class of map $\phi \to \wh\C$ for the appropriate equivalence relation.\footnote{I like to think of the Teichmüller space $\cal T(S,P_f)$ as the set of isomorphism classes of Riemann surfaces marked by $S$, the marking being flexible between the points of $P_f$.
} The elements of $\phi(P_f)$ are called the marked points. Thurston's pull-back map associates to $[\phi]$ a new point $\sigma_f([\phi])$.

The \emph{moduli space} $\cal M(P_f)$, more tractable in many respects, is the set of maps from $P_f$ to $\wh\C$ modulo post-composition by a Möbius map. We will call its elements \emph{configurations}. There is a projection $\pi:\cal T(S,P_f)\mapsto \cal M(P_f)$ that maps the class of $\phi$ to the class of its restriction to $P_f$. The theory of Thurston's pull-back map has the following proposition: if a Thurston map $f$ has hyperbolic orbifold, then $\sigma_f^n([\phi])$ diverges in $\cal T(S,P_f)$ if and only if $\pi(\sigma_f^n([\phi]))$ leaves every compact of $\cal M(P_f)$. 
This means that whichever normalization one chooses for the configuration of points on $\wh\C$ given by $\pi(\sigma_f^n([\phi]))$, passing to a subsequence  there will be at least two points with the same limit.

The fact that one has normalizations to choose makes the discussion difficult and it is much more pleasant to work with the more rigid structure provided by the hyperbolic metric, a.k.a.\ Poincaré metric, of the complement of the marked points, i.e.\ of $U=\wh\C\setminus\phi(P_f)$. We will explain why
Let us first state here a few facts of the geometry of the Poincaré metrics (see \cite{H}): in each of the two components of the complement in $\wh \C$ of a simple closed geodesic in $U$, there is at least two marked points. Conversely, for each simple closed curve $\gamma$ in $U$, such that both component of $\wh \C\setminus\gamma$ contains at least two marked points (such curves are called \emph{non peripheral}), there is a unique geodesic homotopic to $\gamma$ (and it is the shortest curve homotopic to $\gamma$). 
Endow temporarily $\wh\C$ with a spherical metric, using for instance stereographic projection from the Euclidean $2$-sphere. There exists a map $h(\epsilon)\underset{\epsilon\to 0}\tend 0$ such that for a configuration $C\in\cal M(P_f)$ with a simple closed geodesic of hyperbolic length $\leq\epsilon$, whichever representative (normalization) $\phi \in C$ is chosen, there is at least one side of the geodesic where the marked points are grouped in a bunch of size at most $h(\epsilon)$, and there exists $\phi\in C$ such that each group lives at spherical distance $\leq h(\epsilon)$ of respectively $0$ and $\infty$.
Conversely there is a map $g(\epsilon)\underset{\epsilon\to 0}\tend 0$ such that if $C\in\cal M$ has a representative for which the marked points are split in two groups, each with at least two points, and one at distance $\leq g(\epsilon)$ from $0$ and the other at distance $\leq g(\epsilon)$ from $\infty$, then there is a simple closed geodesic separating the groups and of hyperbolic lenght $<\epsilon$ in $U$.

As a corollary: a sequence of configurations $[\phi_n]\in \cal M(P_f)$ will leave every compact in $\cal M(P_f)$ if and only if there exists a sequence of simple closed $\gamma_n$ geodesics in $U_n=\wh\C\setminus\phi_n(P_f)$ whose lengths tends to $0$.

Let us state a few more geometric facts. Two disjoint non peripheral curves are homotopic to geodesics which are either disjoint or equal. In particular a multicurve in $U=\wh\C\setminus\phi(P_f)$ has a privileged representative, which is the collection of simple closed geodesics homotopic to its curves. There is a universal constant $L_0>0$ such that no two different geodesics of length $<L_0$ can intersect (also, but we will not use this fact, a closed geodesic with length $<L_0$ is necessarily simple). If $P_f$ has $k$ points then there is at most $k-2$ different simple closed geodesics of length $<L_0$.

Pieces: Hence endowing $\wh\C\setminus\phi(P_f)$ with its hyperbolic metric, gives us a way to separate the marked points into groups, by cutting the sphere along those few simple closed geodesics that are small (with a notion of small chosen according to the use, for instance shorter than $L_0$). In fact we get more than just groups of point: all pieces do not necessary contain a marked point; the graph whose vertices is pieces and edges are curves separating two pieces forms a tree. See Figures~\ref{fig:tree} and~\ref{fig:tree2}.

Tree: More generally cut a sphere with marked points along a multicurve. The sphere is split it into open pieces $U_i$. Let $U'_i$ be the pieces minus the marked points. Each $U'_i$ has at least three boundary components, which may be points or curves. Sometimes, there is no points in some pieces $U_i$. Nevertheless, it is possible to single out any $U_i$ by choosing three marked points. More precisely, for any three marked points, there is a unique $U'_i$ which, when removed from the sphere, disconnects the three points (this is somewhat related to the tree structure associated to the pieces). For every component, there is a (not necessarily unique) set of three points that it disconnects. 

\begin{figure}%
\begin{tikzpicture}
\node at (0,0) {\includegraphics[width=6cm]{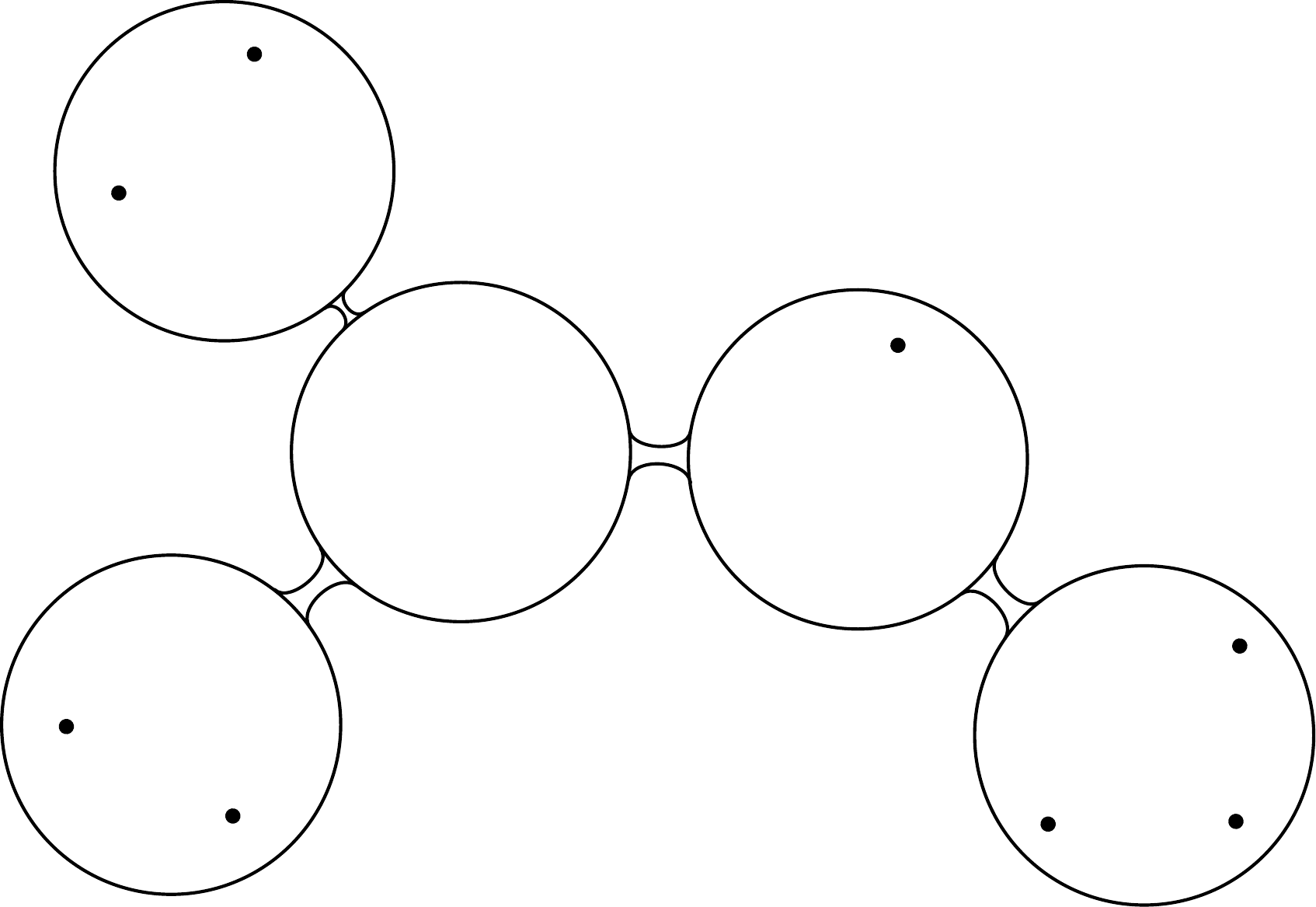}};
\end{tikzpicture}
\caption{Another (schematic) conformal model of the punctured sphere with short geodesics: a tree of sphere. A Möbius map will zoom on each piece, represented here as a sphere. Small disks can be removed from each sphere and tubes can be added (there is flexibility in the choice of their shape).}
\label{fig:spheretree}
\end{figure}

\begin{figure}%
\begin{tikzpicture}
\node at (0,0) {\includegraphics[width=6cm]{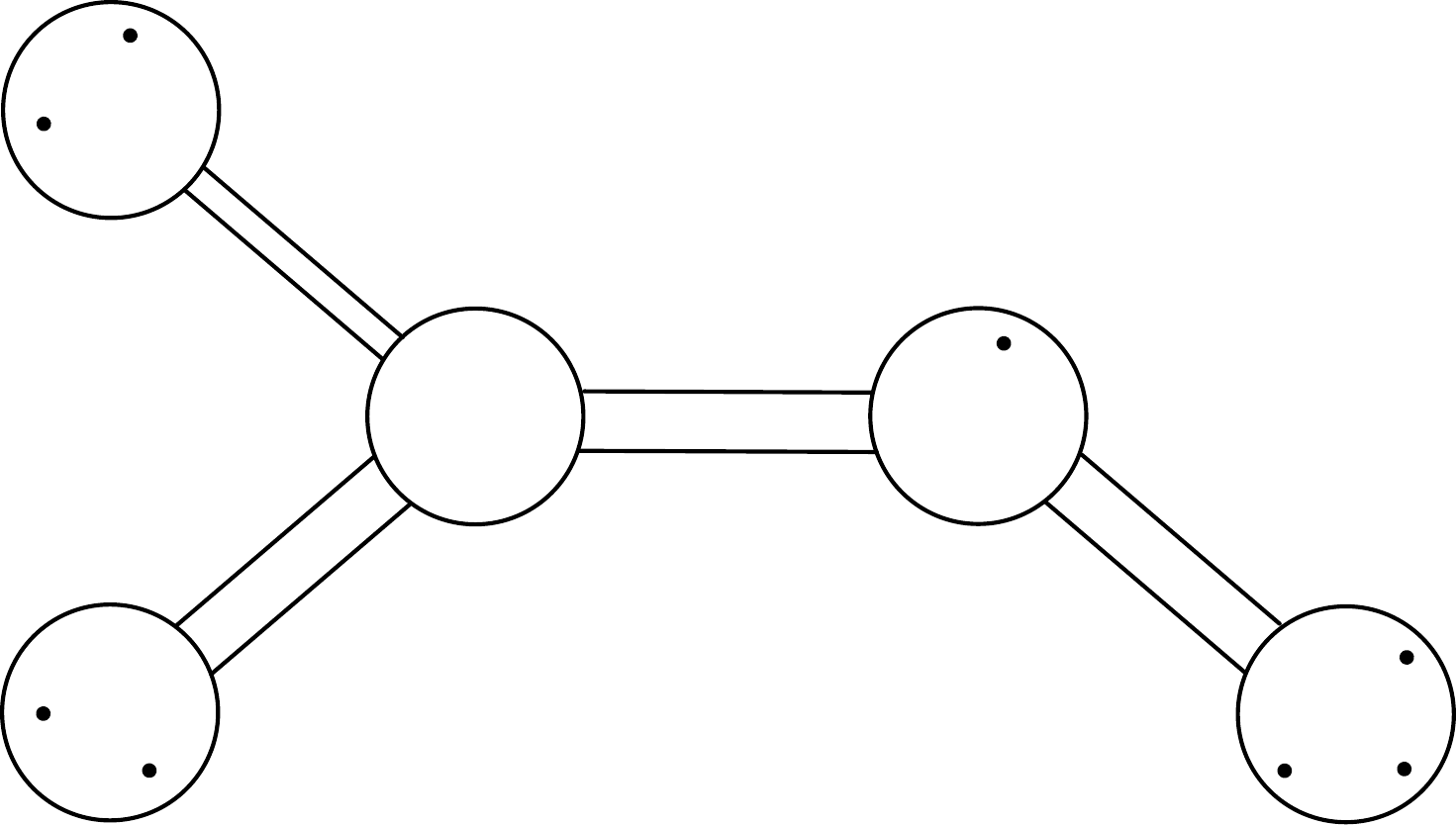}};
\end{tikzpicture}
\caption{Yet another conformal model.}
\label{fig:spheretree2}
\end{figure}

Zoom: Let us get back to $\wh\C\setminus\phi(P_f)$ with its conformal structure and its hyperbolic geometry. Cut it along small simple closed geodesics (for a notion of small that depends on the use). Select three marked points, with some order, and compose $\phi$ with the unique Möbius map sending them respectively to $0$, $1$, $\infty$. This normalization gives a new representative of the configuration which focuses (or kind of zooms) on the piece associated to the selected triple. The smaller the geodesics bounding the piece, the more concentrated the marked point on the other side of the geodesic, viewed on the normalized Riemann sphere. 

Figure~\ref{fig:spheretree} sums this up by a conformal presentation of the Riemann surface taking pinching curves into account. These trees of spheres are the usual point of view used for the definition of a standard compactification of the moduli space, see~\cite{BFLSV}.
Figure~\ref{fig:spheretree2} gives yet another way to conformally represent things. It puts the emphasis both on the sphere and on the funnels between them. The latter are annuli with a big modulus, so they can also be viewed as tunnels. We insist on them because we will present some images that focus on a tunnel instead of focusing on the big spheres.

\subsubsection{The canonical obstruction}

Let us say that two simple closed curves $\gamma$ and $\gamma'$ in $\wh\C\setminus P_f$ are homotopically transverse if there is no curve homotopic to $\gamma$ and disjoint from $\gamma'$.
Kevin Pilgrim in \cite{P} precised how the curves pinch under iteration of Thurston's pull-back map:

\begin{theorem}[Pilgrim]\label{thm:pilgrim} If $f$ is Thurston map with hyperbolic orbifold and is not realizable (so it has obstructions), then there exists a multicurve $\Gamma$, called the \emph{canonical obstruction}, such that $\forall[\phi]\in\cal T$, let $[\phi_n]=\sigma^n([\phi])$ and $U_n =\wh\C\setminus \phi_n(P_f)$: 
\begin{itemize}
\item $\Gamma$ is not empty,
\item for all curve $\gamma\in\Gamma$, the (simple closed) geodesic in $U_n$ homotopic to $\phi_n(\gamma)$ has a length tending to $0$ as $n\tend +\infty$,
\item $\exists N\in\N$ and $L>0$ such that $\forall n\geq N$, these geodesics are the only simple closed geodesics shorter than $L$,
\item $\Gamma$ is an obstruction,
\item $\Gamma$ is $f$-stable and surjective\footnote{Surjective (non official terminology): It means that all curve in $\gamma$ is homotopic to at least one curve in $f^{-1}(\Gamma)$. To a multicurve is associated a directed graph with vertices the curves and arrows $a \to b$ whenever $b$ is homotopic to a component of $f^{-1}(a)$. The multicurve is called surjective whenever all $b$ have an incoming arrow, in analogy to functions. It does not imply that the matrix is surjective.},
\item no curve in any obstruction can be homotopically transverse to a curve of $\Gamma$,
\end{itemize}
The canonical obstruction is unique up to homotopy.
\end{theorem}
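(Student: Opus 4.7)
The plan is to extract $\Gamma$ from the asymptotics of Thurston's pull-back. Fix any base point $[\phi_0]\in\cT$ and set $[\phi_n]=\sigma_f^n([\phi_0])$, $U_n=\wh\C\setminus\phi_n(P_f)$. Since $f$ has hyperbolic orbifold and is not realizable, Thurston's characterization says $\sigma_f$ has no fixed point, hence $\pi([\phi_n])$ leaves every compact of $\cal M(P_f)$; by the hyperbolic-geometric dictionary recalled in Section~\ref{subsub:aboutpinch}, this forces the existence, for all large $n$, of simple closed geodesics in $U_n$ of hyperbolic length tending to $0$. Pick a universal threshold $L_0>0$ (a Bers-type constant, so that the geodesics below $L_0$ are pairwise disjoint) and let $\Gamma_n^\flat$ be the set of simple closed geodesics in $U_n$ of length $<L_0$; then $\phi_n^{-1}(\Gamma_n^\flat)$ defines a multicurve $\wt\Gamma_n$ in $(S,P_f)$, well-defined up to isotopy.

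The heart of the argument is to show that $\wt\Gamma_n$ eventually stabilizes to some multicurve $\Gamma$. The key quantitative input is the pull-back comparison: if $\gamma\subset U_{n+1}$ is a simple closed geodesic and $\gamma'\subset U_n$ is a component of $f^{-1}(\gamma)$ on which $f$ restricts as a covering of degree $d_{\gamma'}$, then $\on{length}_{U_n}(\gamma')\le d_{\gamma'}\cdot\on{length}_{U_{n+1}}(\gamma)$, because the pull-back metric by $f$ dominates the hyperbolic metric on $U_n$ (the marked points of $U_n$ lying above those of $U_{n+1}$ only make distances shrink further). Iterating, one obtains a length-recursion governed by the Thurston transition matrix of $\wt\Gamma_n$. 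Combined with the finiteness of multicurve types on $(S,P_f)$ and with the weak contraction of $\sigma_f$ for the Teichm\"uller metric (so two orbits cannot indefinitely separate their short-curve combinatorics), one argues $\wt\Gamma_n$ can change only finitely often.

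Once $\Gamma$ is isolated, the listed properties follow with moderate work. Non-emptiness and the shrinking of lengths are built into the construction. The ``eventually the only short geodesics'' clause follows from stabilization with a threshold $L<L_0$. Surjectivity is the contrapositive of the inequality above: a curve $\gamma\in\Gamma$ whose length in $U_{n+1}$ tends to $0$ must receive a pull-back component $\gamma'$ whose length in $U_n$ also tends to $0$, hence $\gamma'\in\Gamma_n^\flat$, so $\gamma$ lies in the image of the pull-back pattern; $f$-stability follows from the same length estimate applied in the reverse direction. That $\Gamma$ is a Thurston obstruction is read off from the asymptotic behaviour of its transition matrix on the vector of lengths: since these lengths decay comparably, the leading eigenvalue must be $\geq 1$. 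The exclusion of homotopically transverse obstructions rests on the collar lemma: any curve homotopically transverse to an arbitrarily short geodesic has length of order the logarithm of the inverse of that length and so cannot itself stay short, preventing it from belonging to any multicurve whose lengths are forced to decay. Uniqueness follows from weak contraction: two orbits from different base points stay at bounded Teichm\"uller distance, hence exhibit the same asymptotic pattern of short geodesics.

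The main obstacle is the stabilization step. The length-recursion is an inequality in only one direction, so in principle curves could enter or leave the short regime at different times along different orbits, and ruling this out requires combining the Gr\"otzsch and collar-lemma quantitative estimates with the coarse rigidity given by weak contraction of $\sigma_f$. This is the content of Pilgrim's main technical argument in \cite{P}, and everything else in the theorem then unwinds from the algebra of the Thurston matrix and standard hyperbolic-geometric facts.
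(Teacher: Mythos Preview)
The paper does not prove this theorem. It is stated as a result of Pilgrim, cited from \cite{P}, and then used as a black box in Section~\ref{subsub:pinch} to identify the canonical obstruction in the Shishikura--Tan Lei example. The only commentary the paper offers is the remark that ``the core of the proofs of Thurston's and Pilgrim's theorems use moduli of annuli, and this language could have been used instead.'' So there is no ``paper's own proof'' to compare your attempt against.

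That said, your sketch is a reasonable outline of the strategy behind Pilgrim's argument, and it correctly identifies the stabilization step as the crux. A few points worth flagging. First, as the paper's remark hints, Pilgrim works primarily with moduli of annuli rather than hyperbolic lengths; the two are interchangeable via the collar lemma, but the annulus formulation makes the Gr\"otzsch-type inequalities (your ``pull-back comparison'') cleaner. Second, your derivation of $f$-stability ``from the same length estimate applied in the reverse direction'' is too quick: the length inequality you wrote only goes one way, and stability requires a separate argument (essentially that a non-peripheral preimage of a pinching curve cannot fail to pinch, which uses the surjectivity you already established plus the structure of the transition matrix). Third, your argument that $\Gamma$ is an obstruction (``lengths decay comparably, so the leading eigenvalue must be $\geq 1$'') is the right intuition but hides real work: one needs the reverse inequality relating lengths to the transition matrix, which is where the Gr\"otzsch inequality enters in earnest. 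Finally, you are honest that the stabilization step is the ``main obstacle'' and defer to \cite{P}; this is appropriate, but it means your proposal is a guided reading of Pilgrim rather than an independent proof.
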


It shall be emphasized that there exists examples with hyperbolic orbifold having obstructions that do not pinch. Still, these examples have canonical obstructions that do pinch.

One last remark: even though the hyperbolic geometry is a pleasant way to formulate things, the core of the proofs of Thurston's and Pilgrim's theorems use moduli of annuli, and this language could have been used instead.

\subsubsection{Does the obstruction pinch?}\label{subsub:pinch}

We prove in this section that in the example of Shishikura and Tan Lei, the given obstruction $\Gamma=\{a,b\}$ is the canonical obstruction.

In all this section, homotopic means homotopic in the complement of $P_f$ and isotopic means isotopic rel.\ $P_f$.

\begin{figure}%
\begin{tikzpicture}
\node at (0,0) {\includegraphics[width=7cm]{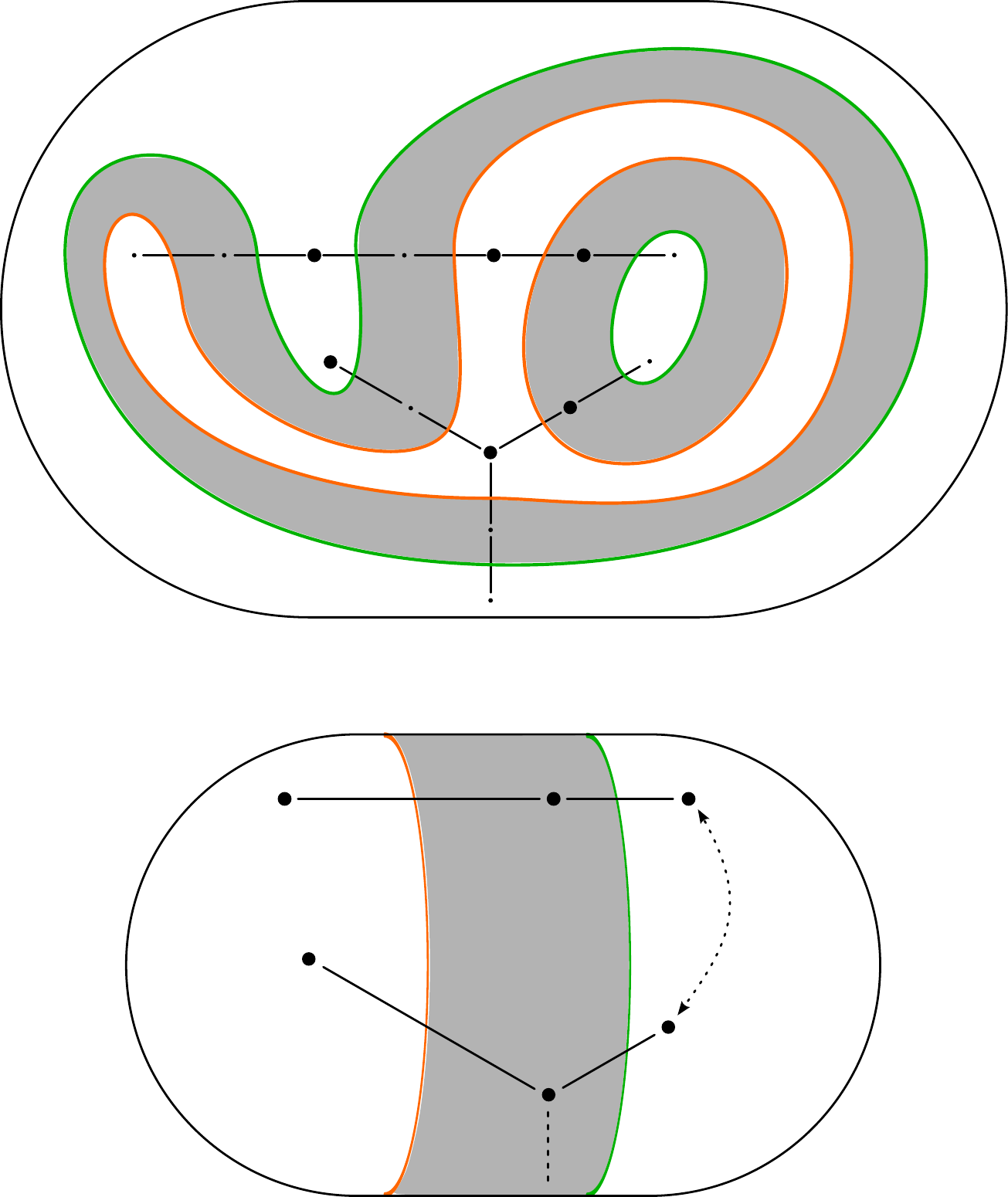}};
\draw (-3,-1) node[anchor=south east, inner sep=0pt] {$A_1$} -- (-2,-2);
\draw (-1.25,-.75) node[anchor=south east, inner sep=0pt] {$A_2$} -- (-0.25,-1.75);
\draw (3,-1) node[anchor=south west, inner sep=0pt] {$A_3$} -- (2,-2);
\draw[->] (0,-.2) -- node[right] {$f$} (0,-0.8);
\end{tikzpicture}
\caption{Illustration for Lemma~\ref{lem:ab}.}
\label{fig:cut}
\end{figure}

\begin{lemma}\label{lem:ab} Let $\gamma$ be a non-peripheral simple closed curve disjoint from $a$ and $b$. Then all components $\gamma'$ of $f^{-1}(\gamma)$ are either peripheral or homotopic to $a$ or $b$.
\end{lemma}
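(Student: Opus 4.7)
The plan is a finite combinatorial analysis using the decomposition of $S$ induced by $a \cup b$, together with the explicit preimage picture of Figure~\ref{fig:preimg}. Since $a$ and $b$ are disjoint simple closed curves on the sphere $S$, they split it into three pieces $A_1, A_2, A_3$ (as in Figure~\ref{fig:cut}): one annulus (containing the equator) and two topological disks. Because $\gamma$ is disjoint from $a \cup b$, it lies in the interior of exactly one $A_i$.

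The first step is to enumerate the possible isotopy classes in $S \setminus P_f$ of a non-peripheral simple closed curve contained in a given $A_i$. Such a curve is determined up to isotopy by how it partitions the ``things'' of $A_i$ (marked points of $A_i$ together with its boundary components) into two groups, each of size at least two. As there are $6$ marked points and $4$ boundary contributions to be distributed among three pieces whose boundary counts are $(2,1,1)$, at least one piece contains $\ge 4$ things and supports genuinely new classes beyond ``$\gamma$ isotopic to $a$ or $b$'', so the statement cannot be obtained merely from stability of $\{a,b\}$.

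Second, I would read off from Figure~\ref{fig:preimg} the list of components of $S \setminus f^{-1}(a \cup b)$, together with their marked points and the degree and image of each under $f$ (the degrees matching what the Thurston matrix prescribes). Each component $\gamma'$ of $f^{-1}(\gamma)$ lies in a single preimage piece and inside it partitions its things in the same combinatorial way as $\gamma$ partitions the things of its image piece. It then remains to check, one class at a time, that each $\gamma'$ is either peripheral in $S \setminus P_f$ or isotopic to a component of $f^{-1}(a \cup b)$ which is itself isotopic to $a$ or $b$.

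The main obstacle is the annular equatorial piece: it carries the bulk of the ``things'' and therefore supports several classes of $\gamma$ that are neither peripheral nor boundary-parallel. These classes pull back through a combination of degree-$1$ and degree-$2$ restrictions of $f$ on different preimage pieces, producing several components of $f^{-1}(\gamma)$ whose isotopy classes must be identified individually using Figure~\ref{fig:preimg}. The argument concludes because the preimage picture is rigid enough that, in each case, the resulting $\gamma'$ ends up bounding on one side either a single marked point or a single component of $f^{-1}(a \cup b)$ isotopic to $a$ or $b$.
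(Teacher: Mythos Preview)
Your overall strategy---decompose along $a\cup b$ and track preimages piece by piece---is the paper's strategy too, but your first step contains a false claim that would block the argument as written. You assert that a simple closed curve in $A_i$ ``is determined up to isotopy by how it partitions the `things' of $A_i$ into two groups''. This is true for $A_1$ and $A_3$, which are disks containing exactly two marked points each (hence three-holed spheres once the marked points are removed). It is \emph{false} for the middle annulus $A_2$: that piece also contains exactly two marked points ($x$ and $c_0$), so $A_2\setminus P_f$ is a four-holed sphere, and for each of the three pairings of its four boundary components there are infinitely many isotopy classes of simple closed curves (apply powers of a Dehn twist about one pairing curve to another). Since $a$ and $b$ are essential in $S\setminus P_f$, the inclusion $A_2\setminus P_f\hookrightarrow S\setminus P_f$ is $\pi_1$-injective and these classes stay distinct in $S\setminus P_f$. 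So your ``one class at a time'' check is not a finite case analysis.

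The paper avoids this entirely. For $\gamma\subset A_1$ or $\gamma\subset A_3$ the pair-of-pants observation forces $\gamma\sim a$ or $\gamma\sim b$, and one simply quotes the already-computed components of $f^{-1}(a)$ and $f^{-1}(b)$. For $\gamma\subset A_2$ the paper does \emph{not} classify $\gamma$; instead it identifies the two components of $f^{-1}(A_2)$ and notes that one is an annulus containing no marked point (so any $\gamma'$ inside it is null-homotopic or homotopic to its core, which is $\sim a$) while the other is isotopic rel.~$P_f$ into $A_3$, a pair of pants (so any $\gamma'$ inside it is peripheral or $\sim b$). That is precisely your second step, and the point is that it already finishes the proof without your first step. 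A minor side remark: the middle annulus does not ``contain the equator''; both $a$ and $b$ cross the equator, since each encloses one post-critical point of $P_1$ and one of $P_2$.
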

\begin{proof}
The multicurve $\Gamma=\{a,b\}$ cuts the sphere into three parts $A_1$, $A_2$, $A_3$, each of which contains exactly two marked points. The curve $a$ separates $A_1$ from $A_2$ and $b$ separates $A_2$ from $A_3$. See Figure~\ref{fig:cut}.
The curve $\gamma$ is contained in one of these components.
\begin{enumerate}
\item If $\gamma\subset A_1$, since $A_1\setminus P_f$ only has three boundary components, which are two marked points and the curve $a$, all non homotopically trivial simple closed curve in $A_1\setminus P_f$ is either homotopic to a small loop around a marked point, and thus peripheral, or homotopic to $a$. In particular $\gamma$ is homotopic to $a$. The preimage of $a$ has two components, one homotopic to $a$ and the other to $b$.
\item Similarly, if $\gamma\subset A_3$, then it is homotopic to $b$, which has two preimages, one homotopicto $a$ and the other which is homotopically trivial.
\item Last, if $\gamma\subset A_2$, note that $A_2$ has two preimages by $f$, one which is an annulus containing no marked points and whose equator is homotopic to $a$, and one which is an annulus containing two marked point and isotopic to a subset of $A_3$. If $\gamma'$ is contained in the first one, it is either null-homotopic or homotopic to $a$. If $\gamma$ is contained in the second one, then it is homotopic via the isotopy to a non peripheral curve contained in $A_3$, and by the same arguments as above, it is thus homotopic to $b$.
\end{enumerate}
\end{proof}

Let $\cal C$ be the union of the homotopy classes in the canonical obstruction. Denote $\gamma\to\gamma'$ whenever there $\gamma'$ is homotopic to a component of $f^{-1}(\gamma)$. Surjectivity of the canonical obstruction tells that all curves $\gamma\in \cal C$ are homotopic to a component of the preimage of a curve in $\cal C$. By the last point of Theorem~\ref{thm:pilgrim}, the latter has a representative disjoint from $a$ and $b$. Since $\gamma$ is not peripheral, the lemma tells us $\gamma\sim a$ or $\gamma\sim b$. So the canonical obstruction is homotopically contained in $\Gamma$. Moreover since the canonical obstruction is not empty, $a$ or $b$ must be in $\cal C$. Last, by $f$-stability, and since $a\to b$ and $b\to a$, we know that $a$ and $b$ belong to $\cal C$. So $\Gamma$ is the canonical obstruction.

\section{Conformal limit of the example of Shishikura and Tan Lei.}\label{sec:conflim}

In this section we show conformally accurate pictures of the slow mating on the Riemann sphere. We then give a conjectural explanation  of what is going on, and finish with a few more pictures.

\subsection{Conformally correct computer generated pictures.}

We need to give practical labels to the post-critical points of the formal mating $f$ and to the marked points. Let $x$ and $y$ denote the critical point of $f$ corresponding to the critical points $x$ and $y$ of $P_1$. Let $y_1$ be the point corresponding to $P_1(y)$. Recall that $x\mapsto y\mapsto y_1 \mapsto x$. Let $c_0$ denote the critical point of $f$ corresponding to the critical point $0$ of $P_2$. Let $c_1$ and $c_2$ correspond to $c=P_2(0)$ and $P_2^2(0)$. Recall that the marked points on $\cal S_R$ are the images of the post-critical points of $f$ by the marking $\phi_R:S\to\cal S_R$. We will use the same labels for the marked points as for the post-critical points of $f$.

\begin{figure}%
\scalebox{.95}{%
\begin{tikzpicture}
\node at (-3.5,0) {\includegraphics[width=6cm]{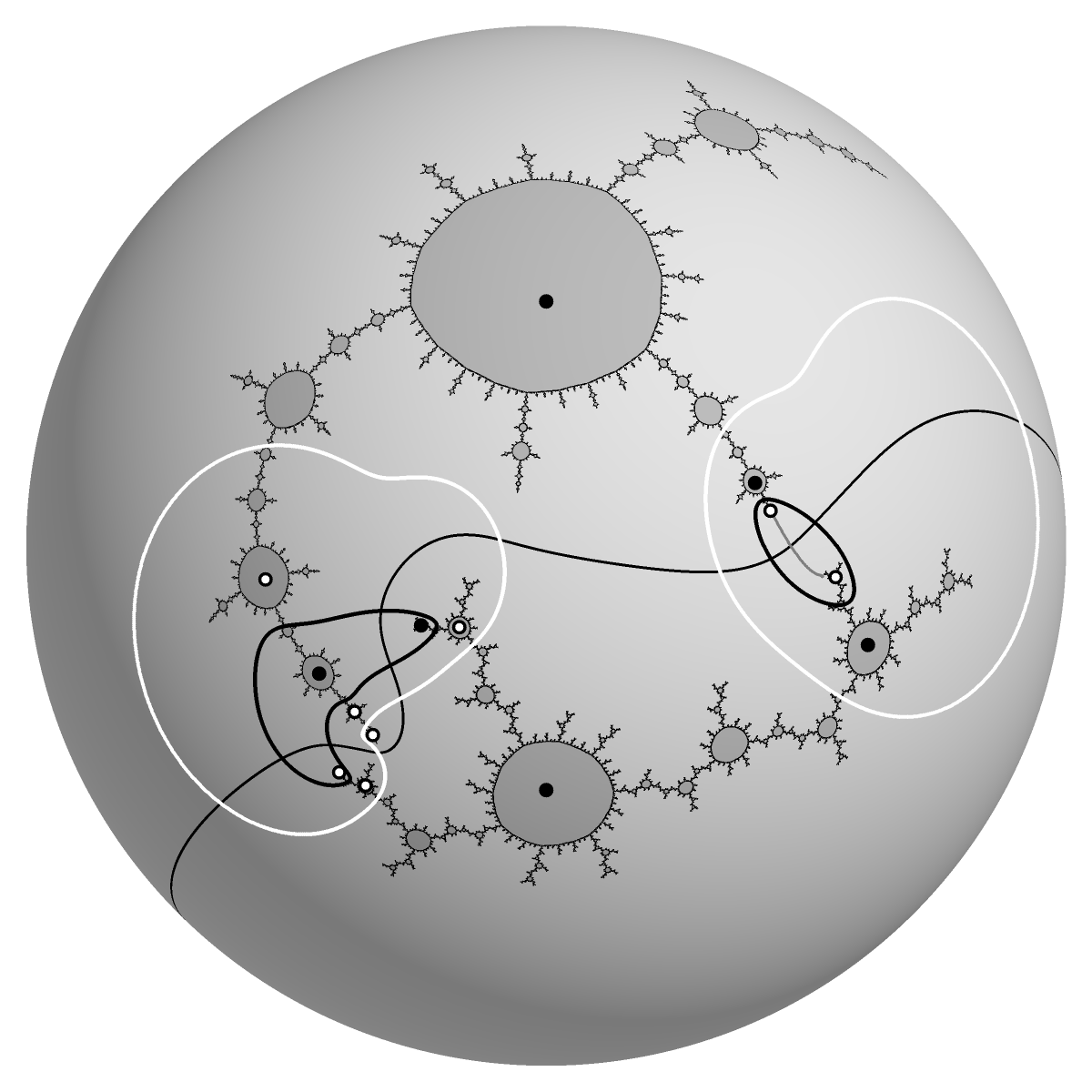}};
\node at (3.5,0) {\includegraphics[width=6cm]{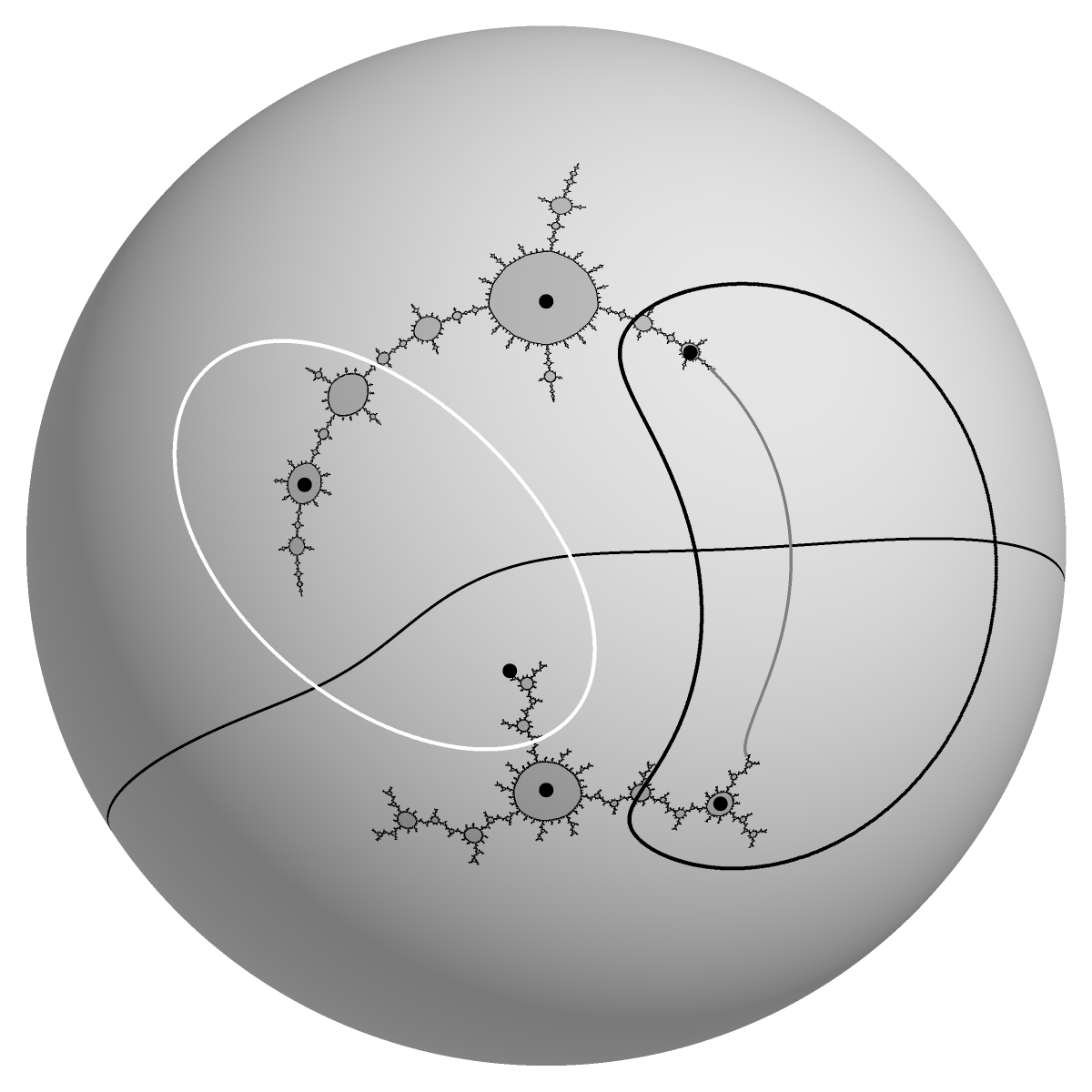}};
\draw[->] (-0.5,0) -- node[above]{$F_R$} (0.5,0);
\node at (-2.4,-3.1) {$\cal S_R$};
\node at (4.6,-3.1) {$\cal S_{R^3}$};
\draw (1.9,3) node[left] {$x$} -- (3.4,1.5);
\draw (0.5,2.0) node[left] {$y$} -- (2.0,0.5);
\draw (5.9,2.7) node[right] {$y_1$} -- (4.4,1.2);
\draw (2,-2.85) node[left] {$c_0$} -- (3.4,-1.45);
\draw (1.45,-2.5) node[left] {$c_1$} -- (3.15,-0.8);
\draw (2.9,-3.1) node[below] {$c_2$} -- (4.4,-1.6);
\end{tikzpicture}%
}
\caption{Conventions: In both pictures, the gray line is the external ray of argument $0$ and the black dots are the marked points. On the right, the multicurve $\{a,b\}$ is drawn with $a$ in white and $b$ in black. On the left we drew the (accurate) preimage of the right by $F_R$. The white dots indicate those preimages of the black dots that are not themselves marked. Here, $R\approx 1.41$, $R^3\approx 2.78$.}
\label{fig:ccp}
\end{figure}

\begin{figure}%
\begin{tikzpicture}
\node at (0,0) {\includegraphics[width=7cm]{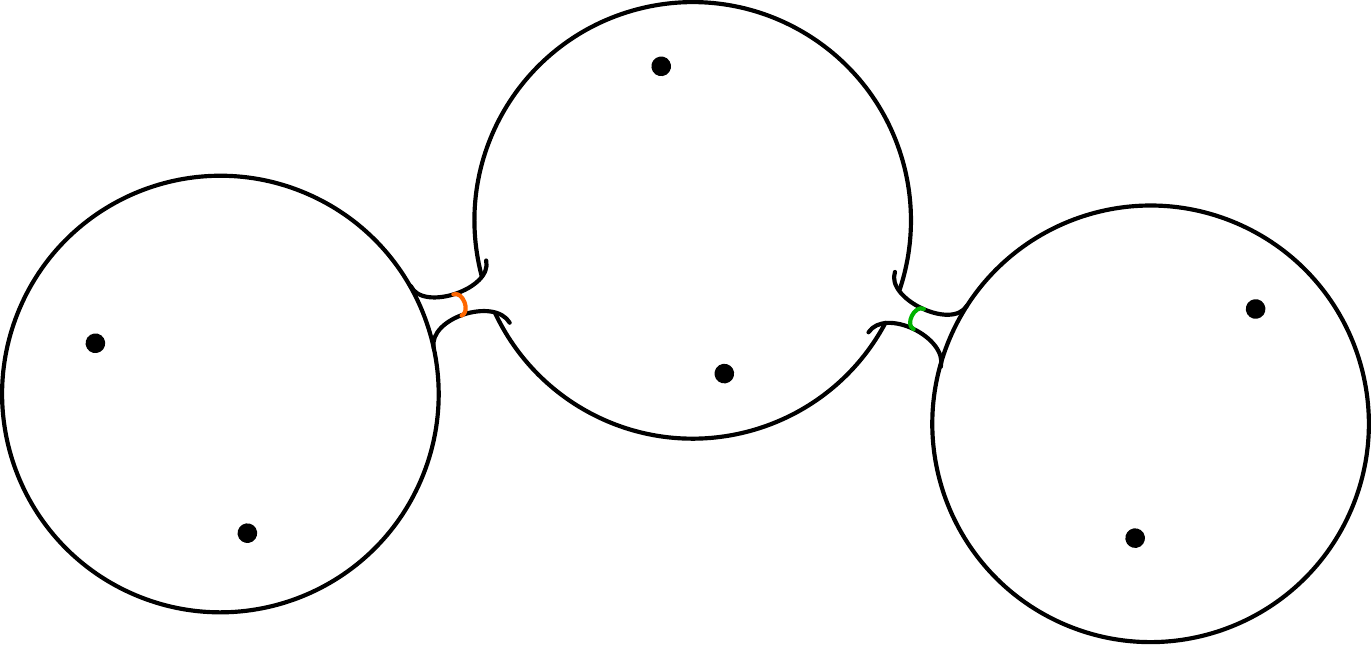}};
\node at (-0.1,-0.3) {$c_0$};
\node at (-1.9,-1) {$c_1$};
\node at (2,-1) {$c_2$};
\node at (-0.3,1.1) {$x$};
\node at (-2.8,0) {$y$};
\node at (2.7,0) {$y_1$};
\node at (-1.1,-0.3) {$a$};
\node at (1.1,-.3) {$b$};
\node at (-2.3,1.1) {$A_1$};
\node at (0,-1) {$A_2$};
\node at (2.4,1) {$A_3$};
\end{tikzpicture}
\caption{A pinched sphere model of $\cal S_R$ for $R$ close to $1$.}
\label{fig:H2O}
\end{figure}

On Figure~\ref{fig:ccp} we drew the spheres $\cal S_R$ for a pair of values $R$ and $R^3$, together with the Julia sets and the equator. We also drew a multicurve\footnote{Multicurves are usually defined for Thurston maps $f:S\to S$ but in the proof of Thurston's theorem one also considers the image of multicurves by markings $\phi : S\to\wh{\C}$. See also Section~\ref{subsub:aboutpinch}.} corresponding to the obstruction $\Gamma=\{a,b\}$ and its preimage by $F_R$.

In Section~\ref{subsub:pinch} we proved that the canonical obstruction is $\Gamma=\{a,b\}$. Thus as $R\tend 1$, the marked sphere will be conformally equivalent to something looking like Figure~\ref{fig:H2O}.

\begin{figure}%
\scalebox{1.07}{%
\begin{tikzpicture}
\node at (0,0) {\includegraphics[width=11.5cm]{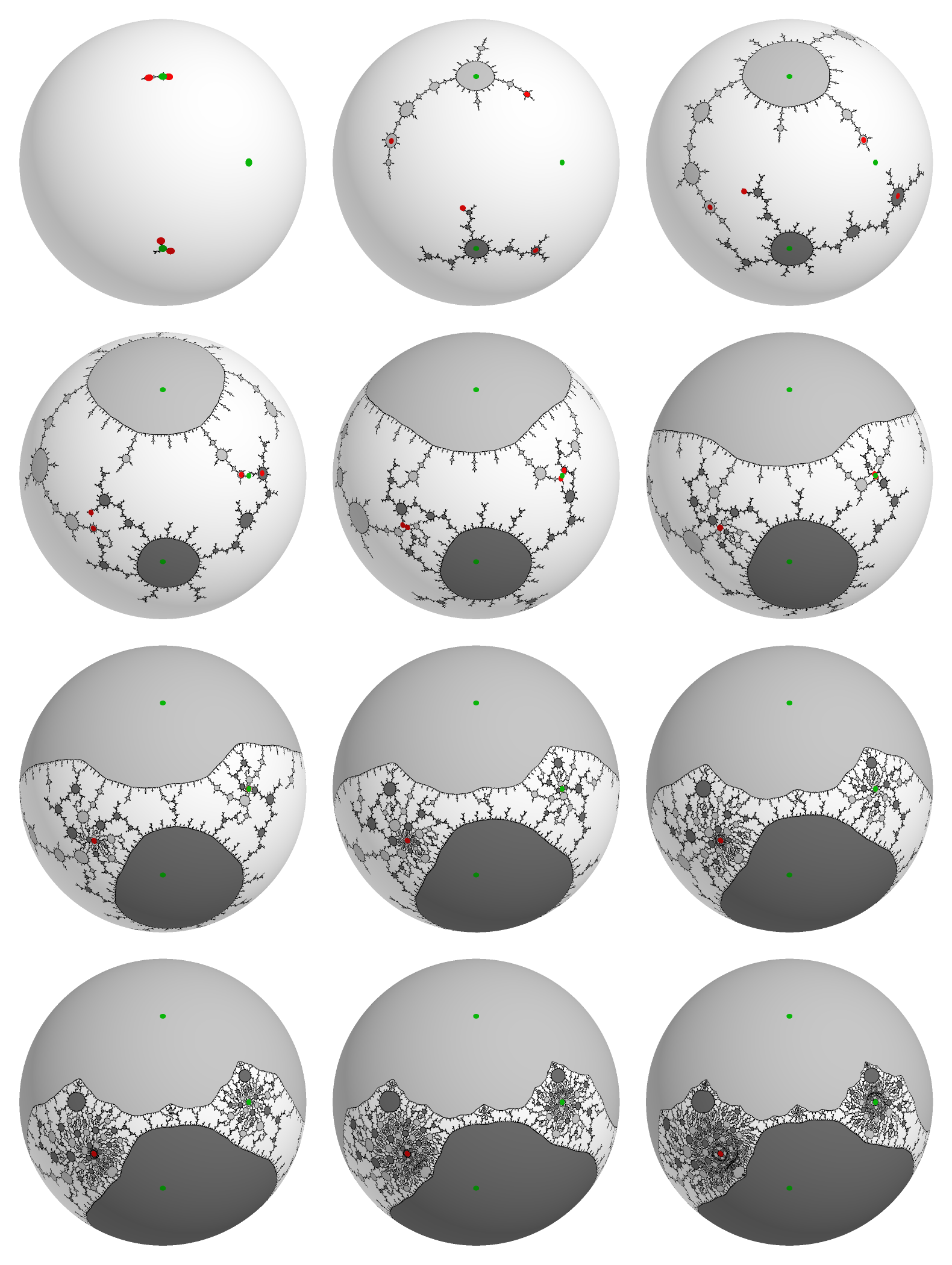}};
\node at (-3.8,4.25) {\textcolor{dkgreen}{$0$}};
\node at (-2.5,5.7) {\textcolor{dkgreen}{$1$}};
\node at (-3.8,7) {\textcolor{dkgreen}{$\infty$}};
\node at (0,4.2) {$c_0$};
\node at (1.4,5.6) {$e_0$};
\node at (0.9,4.2) {$c_2$};
\node at (-0.4,5.1) {$c_1$};
\node at (-0.3,7) {$x$};
\node at (-0.8,5.7) {$y$};
\node at (1,6.5) {$y_1$};
\draw (-5.4,3.8) -- (5.4,3.8);
\draw (-5.4,0) -- (5.4,0);
\draw (-5.4,-3.8) -- (5.4,-3.8);
\end{tikzpicture}
}
\caption{Sequence, centered on the middle sphere $A_2$. See the text for a description.}
\label{fig:movieA2}
\end{figure}

\begin{figure}%
\includegraphics[width=11.5cm]{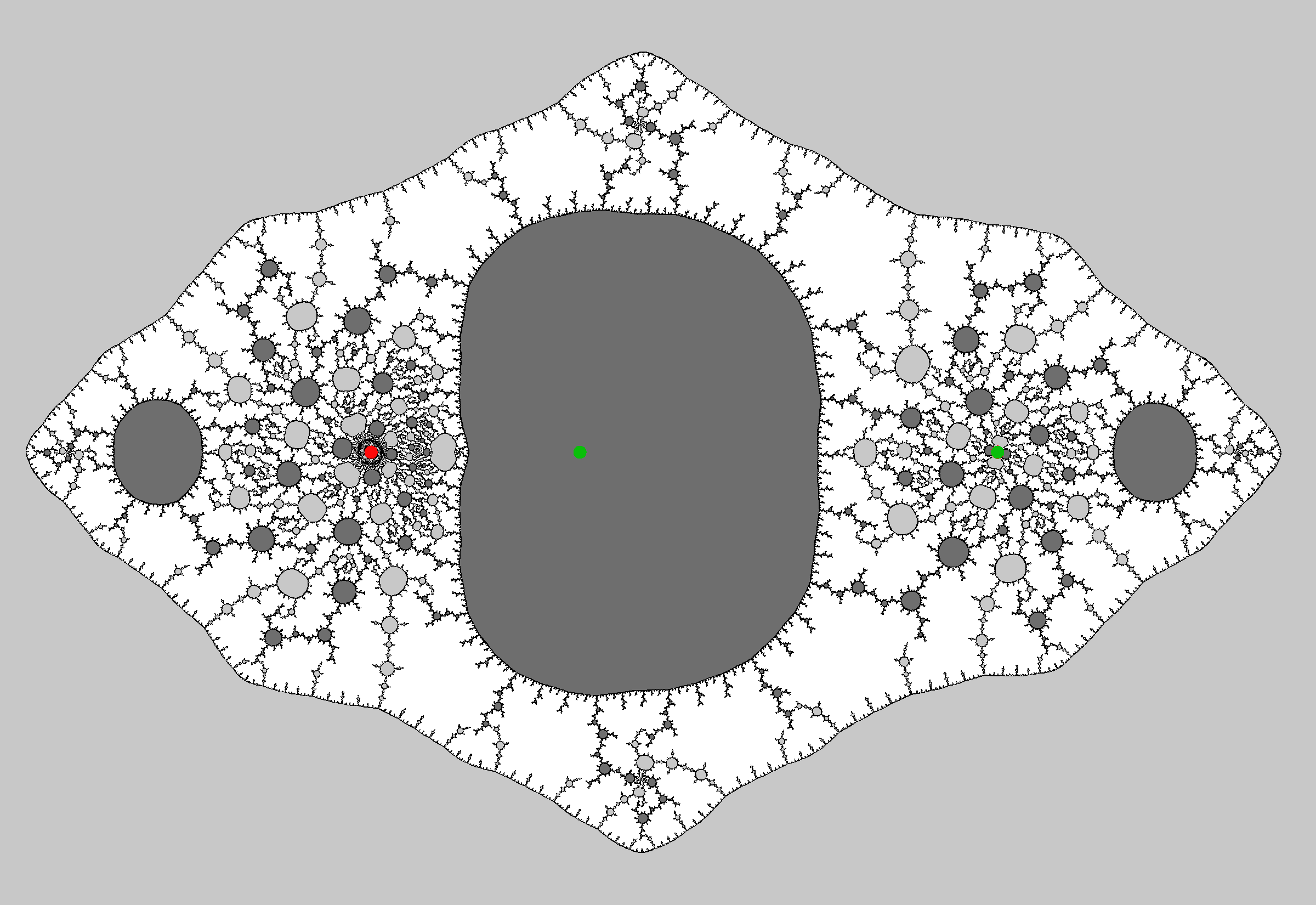}
\caption{Middle sphere centered flat view of $\cal S_R$ for $R=R_{10}$.}
\label{fig:10flat}
\end{figure}

\begin{figure}%
\includegraphics[width=11.5cm]{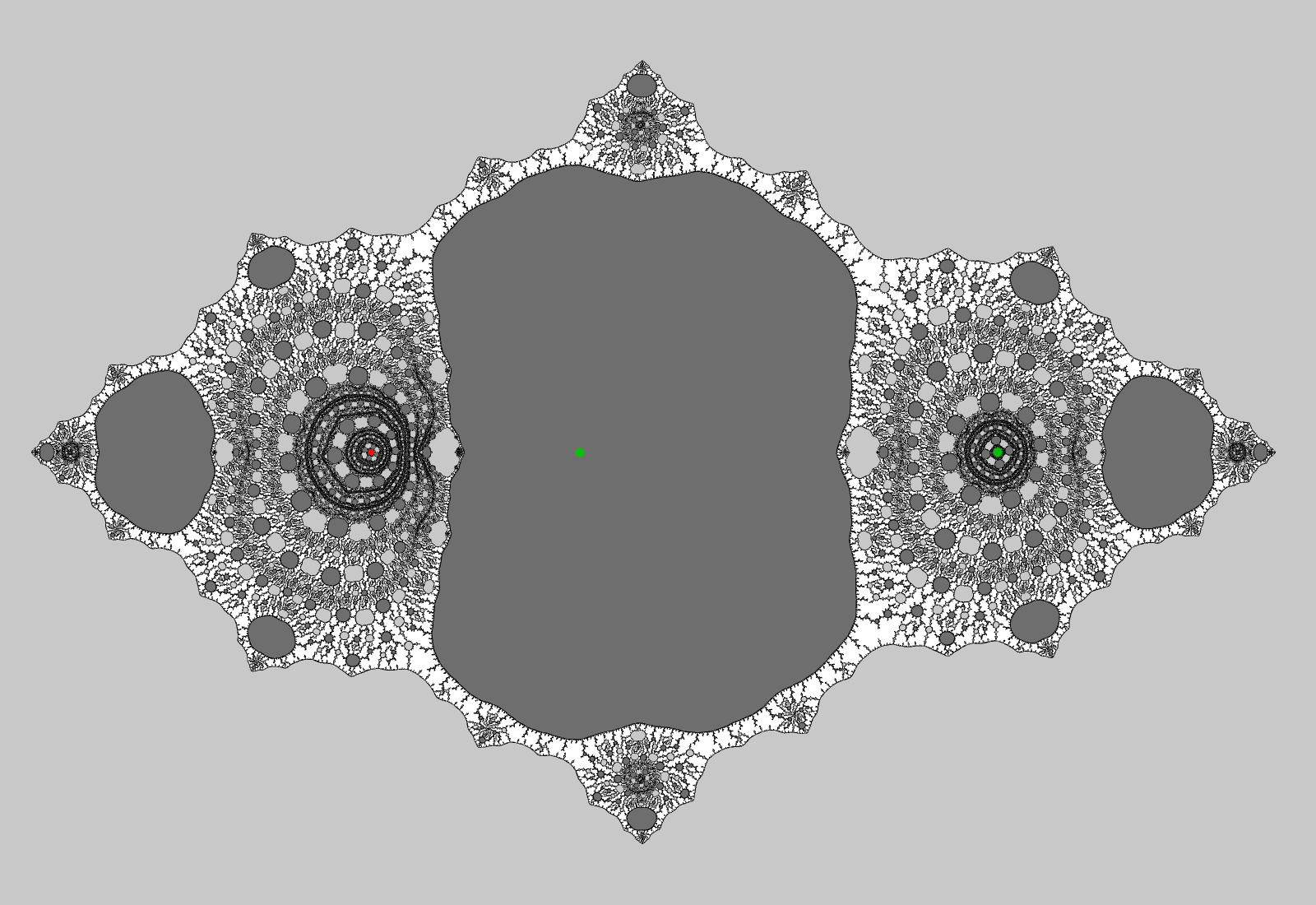}
\caption{Middle sphere centered flat view of $\cal S_R$ for $R=R_{15}$.}
\label{fig:15flat}
\end{figure}

\begin{figure}%
\includegraphics[width=11.5cm]{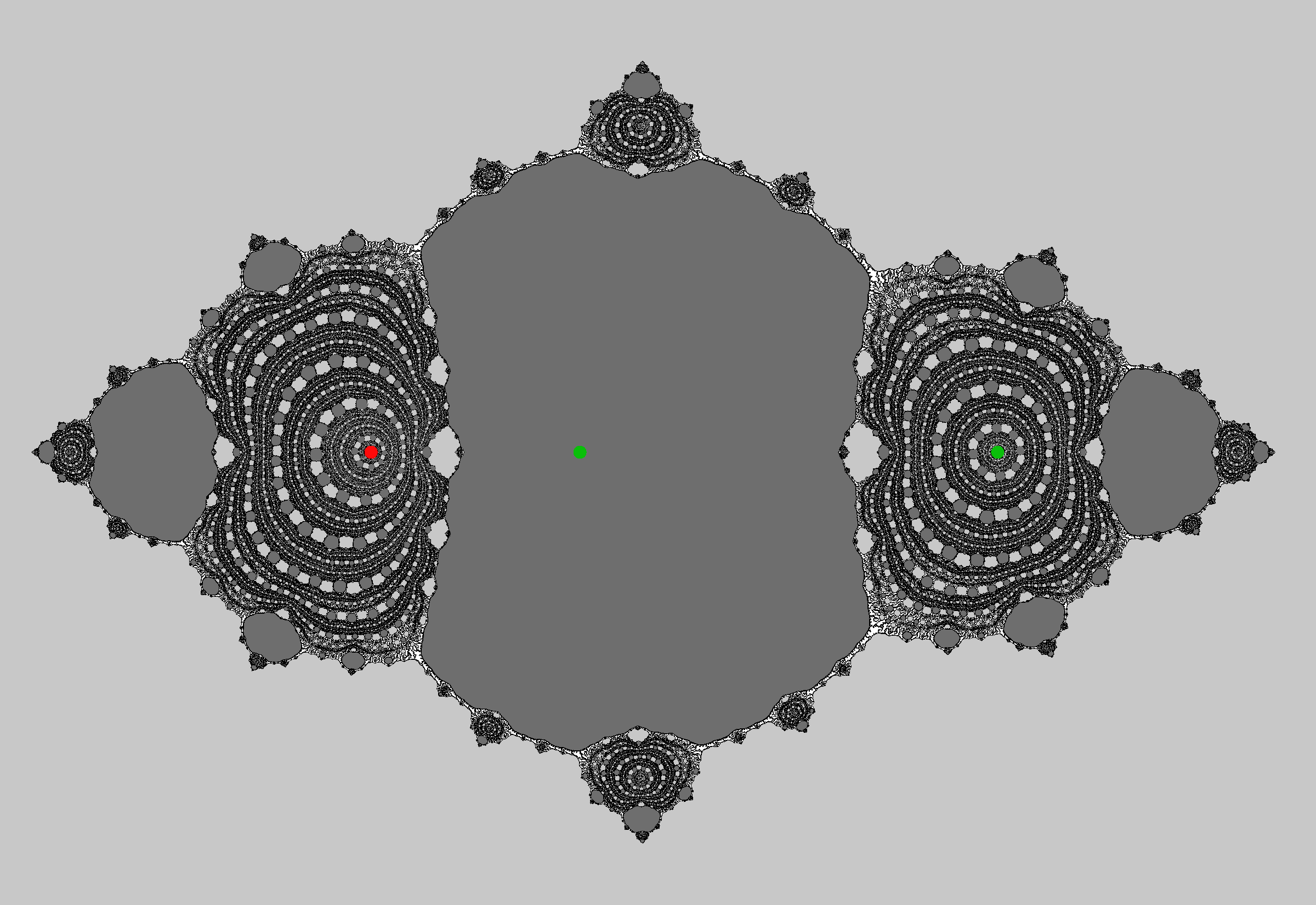}
\caption{Middle sphere centered flat view of $\cal S_R$ for $R=R_{20}$.}
\label{fig:20flat}
\end{figure}

\begin{figure}%
\scalebox{1.1}{%
\begin{tikzpicture}
\node at (0,0) {\includegraphics[width=11.5cm]{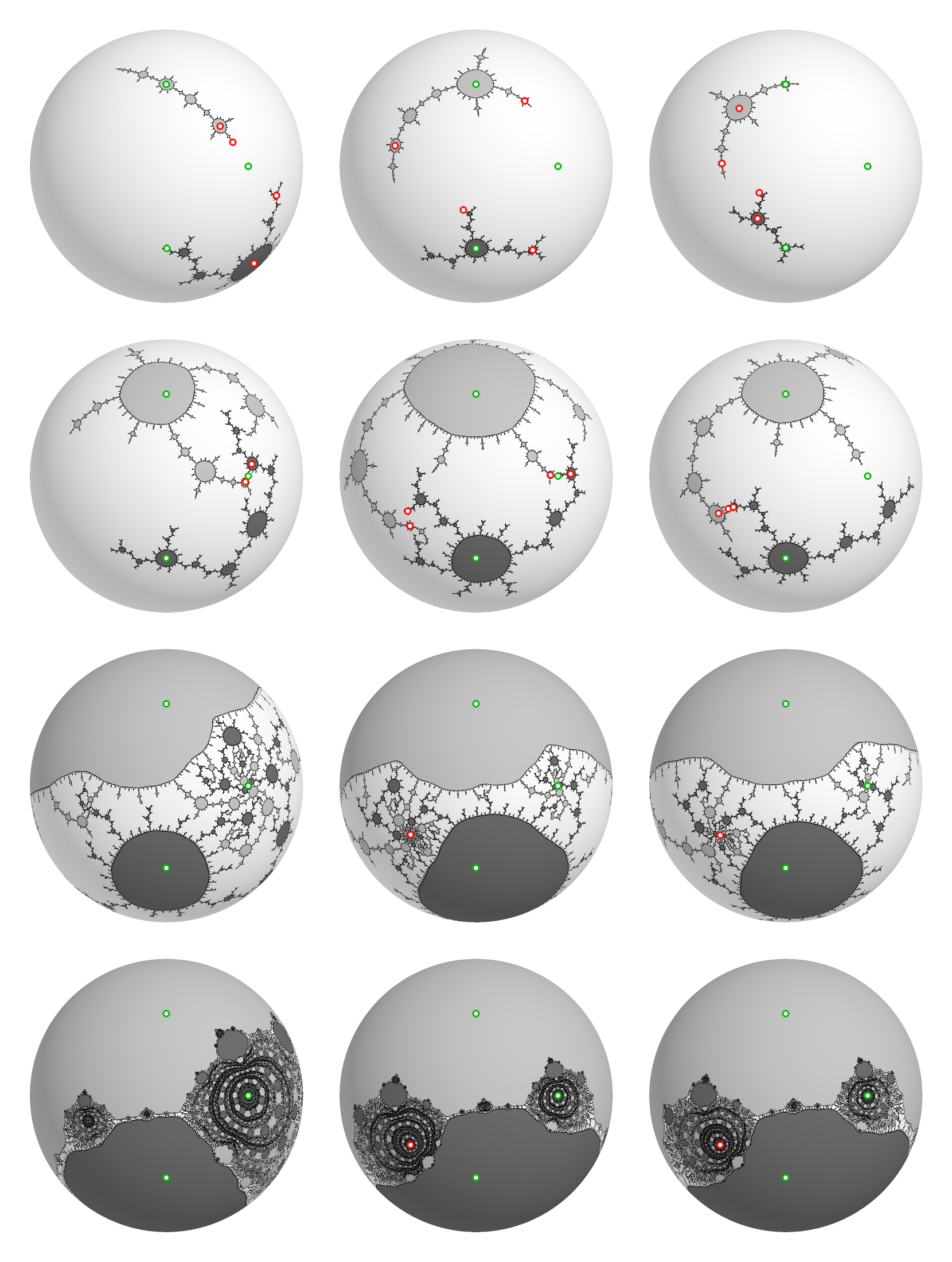}};
\draw (-1.9,-7.2) -- (-1.9,7.2);
\draw (1.9,-7.2) -- (1.9,7.2);
\end{tikzpicture}
}
\caption{Each row shows the same sphere $\cal S_R$ for $R=$ respectively $R_2$, $R_4$, $R_8$ and $R_{16}$, with
 $R_n=10^{4/3^n}$. Each column has a different normalization: from left to right, $(c_1,e,y)$, $(c_0,e,x)$ and $(c_2,e,y_1)$ are mapped to $(0,1,\infty)$.}
\label{fig:parallelMovie}
\end{figure}

On Figure~\ref{fig:movieA2} we show a sequence of twelve spheres $\cal S_{R_1}$ to $\cal S_{R_{12}}$ with $R_n = 10^{4/3^{n}}$, which satisfies $R_{n+1}^3=R_n$, so that each sphere maps to the previous by $F_{R_{n+1}}$; it ranges from $R_1=21.54\ldots$ to $R_{12}=1.0000173\ldots$ The marked points and the point on the equator of argument $0$, call it $e_0$, are highlighted as red and green dots according to a convention that we explain now: the green dots indicate the points used for the normalization of the sphere, and the red dots indicate those marked points that are not green. The normalization was chosen as follows: $c_0$ is sent to $0\in\wh\C$; $x$ is mapped to $\infty\in\wh\C$; the point $e_0$ is mapped to $1\in\wh\C$. Hence two marked points are green. The conformal projection from $\wh\C$ to the Euclidean sphere has been chosen so that the three green points are all visible, with $0$ near the bottom, $1$ on the right and $\infty$ near the top.

This choice of normalization is different from the ones proposed in Section~\ref{subsub:aboutpinch}, which consist in putting at $0$, $1$ and $\infty$ three marked points, whereas here we only put two plus a point on the equator. The reason is that for values of $R$ that are not small, this gives a more balanced picture: indeed, if the three marked were normalized, then at least two of them would belong to one of the two polynomial. As a consequence, on the picture, only that polynomial would be visible. The drawback is that it is not obvious on which piece of the sphere cut along the short geodesics we are zooming in.

On the sequence on Figure~\ref{fig:movieA2}, it appears that we are zooming on the middle sphere, $A_2$: the points $y$ and $c_1$ seem to gather together, like do the points $y_1$, $c_2$ and $e_0$. The explanation would be that in the normalizations focusing on $A_3$ (sending the triple $y_1$, $c_2$ and $z=$ any of the remaining four marked points to $\infty$,$0$,$1$) $e_0$ converges to a point distinct from the limit of the marked points (i.e.\ from $0$, $1$ and $\infty$).

\begin{question}Prove this assertion.\footnote{For instance $e_0$ could be added to the set of marked points and Thurston's algorithm extended to include cycles. See the work of Selinger in \cite{Se}.}
\end{question}

On Figures~\ref{fig:10flat}, \ref{fig:15flat} and \ref{fig:20flat} we drew (still conformally correct) flat views, where the green point $x$ is sent to infinity, for $R_{10}$,  $R_{15}$ and $R_{20}$.

On Figure~\ref{fig:parallelMovie} we show in parallel three sequences of four vertically stacked images, each sequence has a different normalization, each row has the same value of $R$. Each normalization focuses on a different piece: $A_1$, $A_2$, $A_3$. Notice how the middle and right column converge to a similar limit.

\subsection{Interpretation of the pictures.}

In the three normalizations focusing on respectively $A_1$, $A_2$ and $A_3$, even though the family of maps $F_R$ is supposed to diverge, the picture seems still to converge to a well defined limit, very reminiscent of a Julia set, but with mess in some Fatou components. We propose the following (conjectural) interpretation. Work of Selinger in \cite{Se} may help in giving proofs.

As $R\tend 1$, the marked sphere tends to a bunch of three Riemann spheres: left, middle and right, limits of respectively $A_1$, $A_2$ and $A_3$, touching each other at two singular points. The map $F_R$ has a limit $\cal F$ on this bunch minus the touching points, which maps the three spheres between themselves, as rational maps.
More precisely the limit $\cal F$ sends the middle sphere to the left, the left to the right, and the right back to the middle. Let us write $\cal F_1$, $\cal F_2$ and $\cal F_3$ for the three rational maps between these spheres (the domains of definition of the $\cal F_i$ include the touching points but the maps will not necessarily agree on these points, and the convergence will occur on the spheres minus these points).

On the middle sphere, the limit $\cal F_2$ has a critical point at $x$, and a double critical point at $c_0$. There cannot be an odd number of critical points counted with multiplicity so there must be a supplementary one. It is hiding at the touching point between the left and middle spheres. Indeed, recall that the multicurve component $a$ has a pre-image component which is homotopic to $a$ and which is mapped $2:1$ to $a$ (in an orientation reversing way). There is no critical point at the singular point between the middle and the right spheres, because the component of the preimage of $a$ that is homotopic to $b$ is sent $1:1$ to $a$.

The same analysis on the left sphere indicates that $\cal F_1$ has two critical points, so it has only degree $2$. The critical points are $y$ and the touching point, and they respectively map to $y_1$ and the other touching point. Finally, the map $\cal F_3$ has no critical point. Thus the respective degrees of $\cal F_1$, $\cal F_2$, $\cal F_3$, are 2, 3, 1. Another way of seeing that would have been to look at the preimage of the nearly pinched sphere by $F_R$: this gives a pinched sphere with five bubbles, as on the second row of Figure~\ref{fig:tent}, three of which contain the marked points (thus excluding $e_0$, which is not marked).  Counting degrees then gives the same result.

Knowing this and the way marked points and touching points are mapped to each other, is enough, at least on the example we are studying, to completely determine the maps $\cal F_1$,  $\cal F_2$, $\cal F_3$. Let us stick to the normalization used in the sequence of pictures previously shown: before the limit, map $e_0$ to $1$ on each sphere, and the two non collapsing marked point in each sphere to $0$ and $\infty$ for respectively $P_2$ and $P_1$. This means on the limit: to $1$ is mapped the touching point on the left sphere, the right touching point on the middle sphere, and the point $e_0$ on the right sphere; to $0$ and $\infty$ are mapped the two marked points in each sphere. Then the three rational maps $\cal F_i$ must send $0$ to $0$, $1$ to $1$ and $\infty$ to $\infty$. Since moreover $\cal F_3$ has degree $1$, it is the identity:
\[\cal F_3(z)=z.\]
Note that $\cal F_1$ is a polynomial and that the conjugate $G_2$ of $\cal F_2$ by $z\mapsto 1/z$ is polynomial (recall that a rational map is polynomial if and only if the preimage of $\infty$ is itself, no more, no less). 
The degree of $G_2$ is $3$ and $G_2'(0)=0$, $G_2(0)=0$, so $G(z)=bz^3+az^2$ with $b\neq 0$. Also
$G_2(1)=1$ so $a+b=1$, and $G_2$ maps the remaining critical point $c=-2a/3b$ (whose coordinate in the middle sphere we do not know yet) to $1$. This gives $4a^3=27b^2$. So $b=1-a$ and $4(a/3)^3-a^2+2a-1=0$. The latter equation has a double root $a=3$ and a single root $a=3/4$. In the case $a=3$, we get that the other critical point $c=1$, which we exclude: the two touching points cannot be equal. Thus $a=3/4$, $b=1/4$, $c=-2$ and $G_2(z)=z^2(z+3)/4$:
\[\cal F_2(z)=\frac{4z^3}{3z+1}.\]
We also got the coordinates of the left touching point on the middle sphere, which is a critical point of $\cal F_2$ and is $1/c=-1/2$.
Finally, $\cal F_1$ is a degree $2$ polynomial with a critical point at $z=1$, fixing $0$ and mapping $1$ to the touching point on the right sphere. The latter being mapped to the touching point $-1/2$ on the middle sphere by $\cal F_3=\on{id}$ we get that $\cal F_1(1)=-1/2$.
\[\cal F_1(z)=((z-1)^2-1)/2=z^2/2-z.\]
This is summarized on Figure~\ref{fig:summary}.

\begin{question} Prove that the limit of $F_R$ in the pinching spheres representation is indeed the map $\cal F = \cal F_1 \cup \cal F_2 \cup \cal F_3$.\end{question}

\begin{figure}%
\begin{tikzpicture}
\node at (0,0) {\includegraphics[width=9cm]{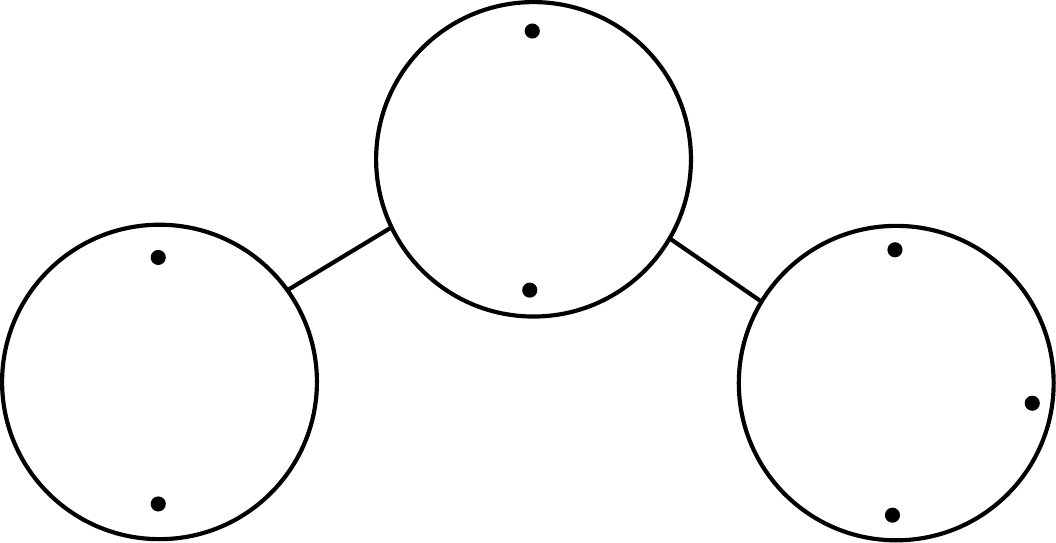}};

\node at (-3.5,0) {$\infty_2$};
\node at (.4,1.9) {$\infty_2$};
\node at (3.5,0) {$\infty$};
\node at (-3.4,-1.9) {$0$};
\node at (0.3,0) {$0_3$};
\node at (3.4,-1.9) {$0$};
\node at (4.05,-1.1) {$1$};
\node at (2.4,-0.5) {$-1/2$};
\node at (1,0.5) {$1$};
\node at (-0.7,0.5) {$-1/2_2$};
\node at (-2.3,-0.4) {$1_2$};
\draw [->] (3,0.6) to [bend right] node [above right] {$\cal F_3(z) = z$} (1.5,1.3);
\draw [->] (-1.5,1.3) to [bend right] node [above left] {$\ds\cal F_2(z) = \frac{4z^3}{3z+1}$} (-3,0.6);
\draw [->] (-1.6,-1) to node [below] {$\cal F_1(z) = z^2/2-z$} (1.6,-1);

\node (i1) at (-2,-2.5) {$\infty$};
\node (i2) at (0,-2.5) {$\infty$};
\node (i3) at (2,-2.5) {$\infty$};
\draw [->] (i1) -- node [above] {$2$} (i2);
\draw [->] (i2) -- node [above] {$2$} (i3);
\node (i1) at (-2,-3.15) {$-1/2$};
\node (i0) at (-2,-3.85) {$1$};
\node (i2) at (0,-3.5) {$1$};
\node (i3) at (2,-3.5) {$-1/2$};
\draw [->] (i1) -- node [above] {$2$} (i2);
\draw [->] (i0) -- (i2);
\draw [->] (i2) -- node [above] {$2$} (i3);
\node (i1) at (-2,-4.5) {$0$};
\node (i2) at (0,-4.5) {$0$};
\node (i3) at (2,-4.5) {$0$};
\draw [->] (i1) -- node [above] {$3$} (i2);
\draw [->] (i2) --(i3);
\node at (-1,-5) {$\cal F_2$};
\node at (1,-5) {$\cal F_1$};
\node at (-2,-5) {\scriptsize middle};
\node at (0,-5) {\scriptsize left};
\node at (2,-5) {\scriptsize right};
\end{tikzpicture}
\caption{Summary. Above: the subscript indicates the multiplicity of the critical points. Cross ratio of complex numbers is not respected on this schematic illustration. Below: who maps where.}
\label{fig:summary}
\end{figure}

Now, the third iterate of the map $\cal F$ is a map from the bunch to itself which sends each sphere to itself, by three degree $6$ maps that are semi-conjugate to each other because they all consist in making one turn in the following non-commutative diagram:

\centerline{%
\begin{tikzpicture}
[my style/.style={circle,draw=black,minimum size=6mm,outer sep=2pt}]
\node[my style] (a) at (0,1.4) {};
\node[my style] (b) at (-.85,0) {};
\node[my style] (c) at (.85,0) {};
\path[->] (a) edge node[above left]{$\cal F_2$} (b)
	(b) edge node[below]{$\cal F_1$} (c)
	(c) edge node[above right]{$\cal F_3$} (a);
\end{tikzpicture}%
}

The map in the middle is $H_2= \cal F_3\circ \cal F_1\circ \cal F_2$, the map on the left is $H_1=\cal F_2 \circ \cal F_3\circ \cal F_1$ and the map on the right is $H_3=\cal F_1\circ \cal F_2\circ \cal F_3$. Since $\cal F_3$ is the identity in this example and with our choices of coordinates, we only get two different degree $6$ rational maps. Let us characterize further these two maps.
They both have three critical values\footnote{In this respect they are rigid in the following sense: any rational map topologically equivalent to one of them as a ramified cover, by a pair of orientation preserving homeomorphisms, must me equivalent to it by a pair of homographies.}: $0$, $1$, $\infty$. The map $H_2$ from the middle sphere to itself has $5$ critical points: $\infty \overset{4}\longrightarrow \infty$,  $1\overset{4}\longrightarrow 1$, $-1/2\overset{2}\longrightarrow -1/2$, $-1/3\overset{2}\longrightarrow \infty$ and $0 \overset{3}\longrightarrow 0$, the number above the arrow indicates the local degree. The sum of ``local degrees minus one'' is $10$, which agrees with the formula $2d-2$ counting the number of critical points with multiplicity for a degree $d=6$ map.
The map $H_1$ on the left sphere has $4$ critical points: $\infty \overset{4}\longrightarrow \infty$,  $1 \overset{4}\longrightarrow 1$,  $0 \overset{3}\longrightarrow 0$,  $2 \overset{3}\longrightarrow \infty$.
This is better presented as a diagram:

\centerline{%
\begin{tikzpicture}
\node (z) at (5,0) {$0$};
\node (i) at (5,-.8) {$\infty$};
\node (u) at (5,-1.6) {$1$};
\node (mz) at (3.5,0) {$2$};
\path[->] (mz) edge node[above]{$\scriptstyle 3$} (z)
                 (z) edge [loop right] node{$\scriptstyle 3$} ();
\path[->] (i) edge [loop right] node{$\scriptstyle 4$} ();
\path[->] (u) edge [loop right] node{$\scriptstyle 4$} ();
\node[left] at (6,-2.4) {$\cal F_2\circ \cal F_3 \circ \cal F_1$};
\begin{scope}[xshift=10cm]
\node (z) at (0,0) {$0$};
\node (i) at (0,-.8) {$\infty$};
\node (u) at (0,-1.6) {$1$};
\node (mi) at (-1.5,-.8) {$-1/3$};
\node (mu) at (-1.5,-1.6) {$-1/2$};
\path[->] (z) edge [loop right] node{$\scriptstyle 3$} ();
\path[->] (mi) edge node[above]{$\scriptstyle 2$} (i) 
	         (i) edge [loop right] node{$\scriptstyle 4$} ();
\path[->] (mu) edge node[above]{$\scriptstyle 2$} (u)
                 (u) edge [loop right] node{$\scriptstyle 4$} ();
\node[left] at (1,-2.4) {$\cal F_3\circ \cal F_1 \circ \cal F_2\phantom{)}$};
\node[left] at (1,-2.8) {$(=\cal F_1\circ \cal F_2 \circ \cal F_3)$};
\end{scope}
\end{tikzpicture}%
}

\begin{figure}
\begin{tikzpicture}

\node at (0,0) {\includegraphics[width=11.5cm]{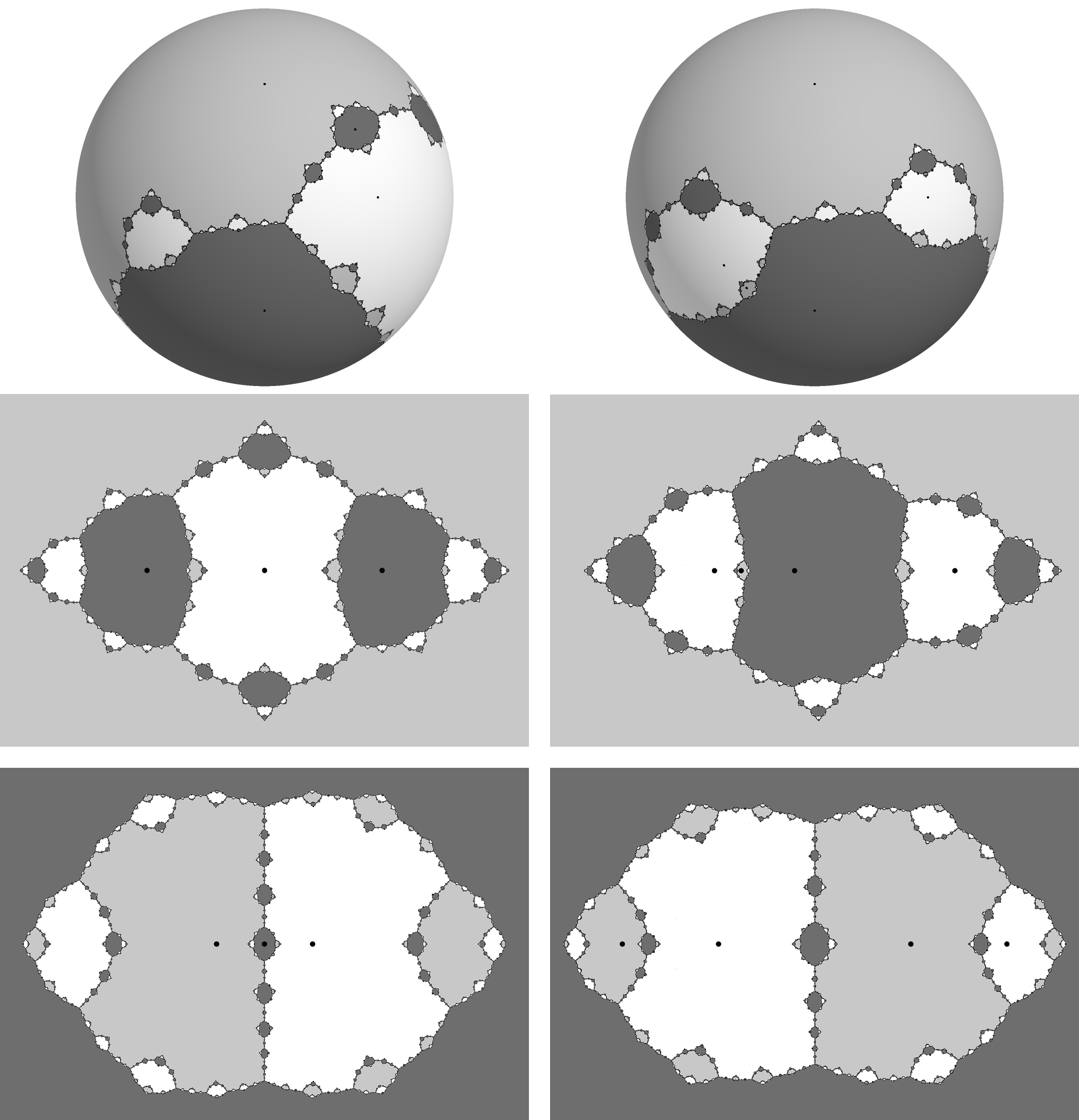}};

\node at (-2.8,5.3) {$\infty$};
\node at (-2.8,2.8) {$0$};
\node at (-1.6,4) {$1$};
\draw (-1.9,4.65) -- (-1,5.3) node[right] {$2$};

\node at (3.1,5.3) {$\infty$};
\draw (1.9,3.1) -- (1,2.5) node [below left] {$\scriptstyle -1/2$};
\draw (2.2,2.87) --(1.7,2.15) node [below left] {$\scriptstyle -1/3$};
\node at (3.1,2.8) {$0$};
\node at (4.3,3.85) {$1$};

\node at (-3.2,4.8) {*};
\node at (-2.2,3.8) {*};
\node at (-3.2,3) {*};

\node at (2.7,4.6) {*};
\node at (2.2,3.5) {*};
\node at (2.9,3.3) {*};

\node at (-4.3,0.1) {$0$};
\node at (-2.8,0.1) {$1$};
\node at (-1.6,0.1) {$2$};

\node at (1.7,0.1) {$\scriptstyle -1/2$};
\node at (2.1,-0.4) {$\scriptstyle -1/3$};
\node at (2.85,0.1) {$0$};
\node at (4.3,0.1) {$1$};

\path[->] (-1.5,-1.05) edge [bend right=30] node [below] {$z\mapsto z^2/2-z$} (1.5,-1.05);
\path[->] (1.5,0.85) edge [bend right=30] node [below] {$z\mapsto \frac{4z^3}{3z+1}$} (-1.5,0.85);

\node at (-3.6,-3.9) {$\infty$};
\node at (-2.8,-3.9) {$2$};
\node at (-2.2,-3.9) {$1$};

\node at (0.8,-3.8) {$-1/3$};
\node at (1.8,-3.8) {$-1/2$};
\node at (4,-3.8) {$\infty$};
\node at (5,-3.8) {$1$};

\end{tikzpicture}
\caption{Julia sets of the maps $H_i$ (third iterate of the presumed limit on the tree of spheres). The three basins of attraction are drawn in white and shades of gray. We have put a mark ``$*$'' in each immediate basin in the first row. Compare with Figure~\ref{fig:parallelMovie}. The column on the left represents $H_1$ (left sphere). The column on the right represents $H_2$ and $H_3$ (middle and right spheres). The first row is the sphere view, the second row is the flat view, and the last is the flat view followed by an inversion putting $0$ at infinity. Putting the third fixed critical point at infinity would give an image identical to the second row but with a permutation of light and dark gray.}
\label{fig:lims}
\end{figure}

On Figure~\ref{fig:lims} are shown pictures of the Julia and Fatou sets of these two maps $H_1$ and $H_2$. For both, the behavior of the critical points and the classification of Fatou components imply that every Fatou component eventually falls under iteration in the immediate basin of the three fixed critical point of $H_i$.
The picture suggests that there may be better coordinates in which to express them. For instance if one takes the coordinates $w=z-1$ in the left sphere and $u=1/(2z+1)$ in the middle sphere, then we get
\[\cal F_3\circ \cal F_1 : w\mapsto u=1/w^2\text{ and }\cal F_2 : u \mapsto w=-\frac{1}{u^2}\cdot\frac{u-1/3}{1-u/3}.\]
These are two real Blaschke fractions. So they commute with $z\mapsto \ov{z}$ and $z\mapsto 1/z$ and preserve both the real axis and the unit circle. Their compositions $H_1$ and $H_2$ have relatively simple expressions: $H_2 =\cal F_3 \circ \cal F_1 \circ \cal F_2$ reads
\[u \mapsto u^4\left(\frac{1-u/3}{u-1/3}\right)^2\]
and $H_1 =\cal F_2 \circ \cal F_3 \circ \cal F_1$ reads
\[ w \mapsto -w^4\frac{1-w^2/3}{w^2-1/3}.\]
Another presentation would take $s=1-2/z$ in the left sphere and $t=1+1/z$ in the middle sphere, and then
\[\cal F_3\circ \cal F_1 : s\mapsto t=\frac{s+s^{-1}}{2}\text{ and }\cal F_2 : t \mapsto s=\frac{3t-t^3}{2}.\]
These are odd real maps, so they commute with $z\mapsto \ov{z}$ and $z\mapsto -z$ and preserve the real and the imaginary axis. The composition $H_1 = \cal F_2 \circ \cal F_3 \circ \cal F_1$ reads \[s\mapsto \frac{-1}{8}\left(s^3-3s-\frac{3}{s}+\frac{1}{s^3}\right).\]

\begin{figure}%
\includegraphics[width=11.5cm]{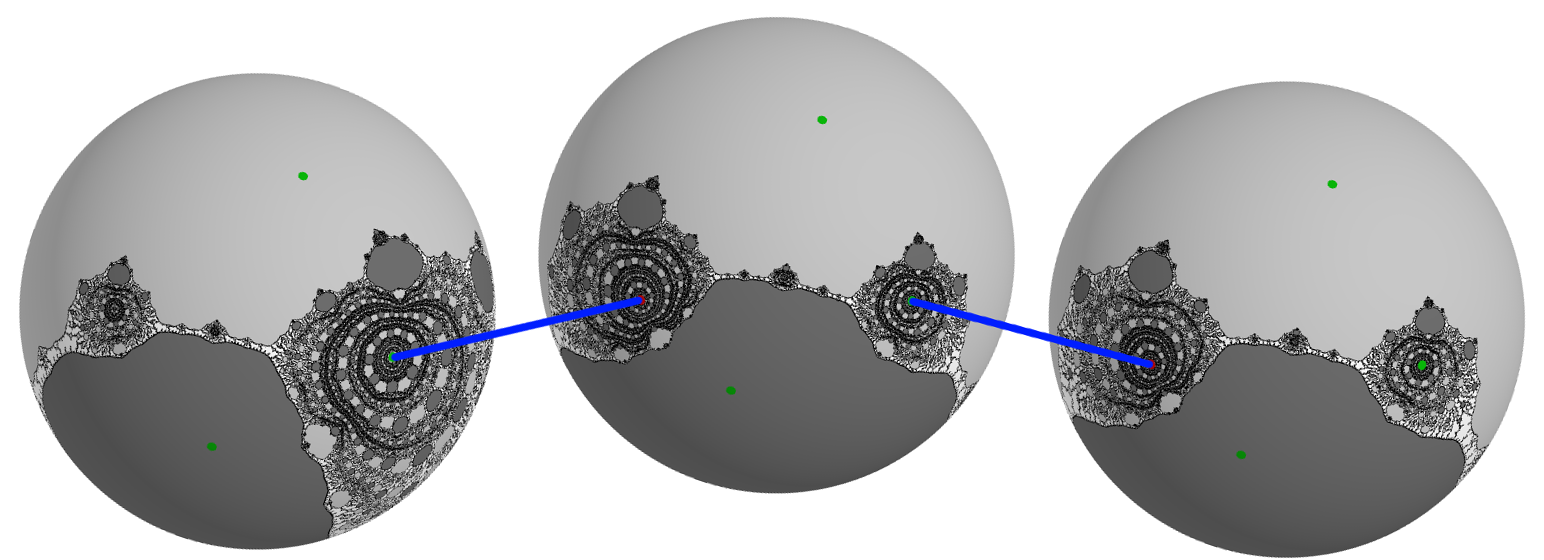}
\caption{The three spheres at the bottom of Figure~\ref{fig:parallelMovie}, with tubes between them symbolized by blue lines.}
\label{fig:pm2}
\end{figure}

Let us show again the bottom row of Figure~\ref{fig:parallelMovie}, and add blue lines symbolizing the tunnels between the three spheres (one can cut little openings in each sphere and add tunnels to glue them together, and recover the inital Riemann sphere $\cal S_R$): this gives Figure~\ref{fig:pm2}.
Comparing Figures~\ref{fig:pm2} and~\ref{fig:lims}, the reader will have noticed that one of the basins of the limit maps $H_i$ is replaced by some sort of mess, that gets denser and denser as $R\to 1$. In any of the three normalizations, the Hausdorff limit of the two Julia sets seems to contain the whole basin that is white in Figure~\ref{fig:lims}.

\begin{question} Prove that the Hausdorff limit of the Julia set in these three normalizations is indeed the union of the Julia set of $H_i$ and of the basin of attraction that is white in Figure~\ref{fig:lims}.\end{question}

In the next section we will focus on what happens in the tunnels between the three spheres. This may help to prove the assertion above.

\subsection{Zooming on a tunnel.}

\begin{figure}%
\includegraphics[width=10cm]{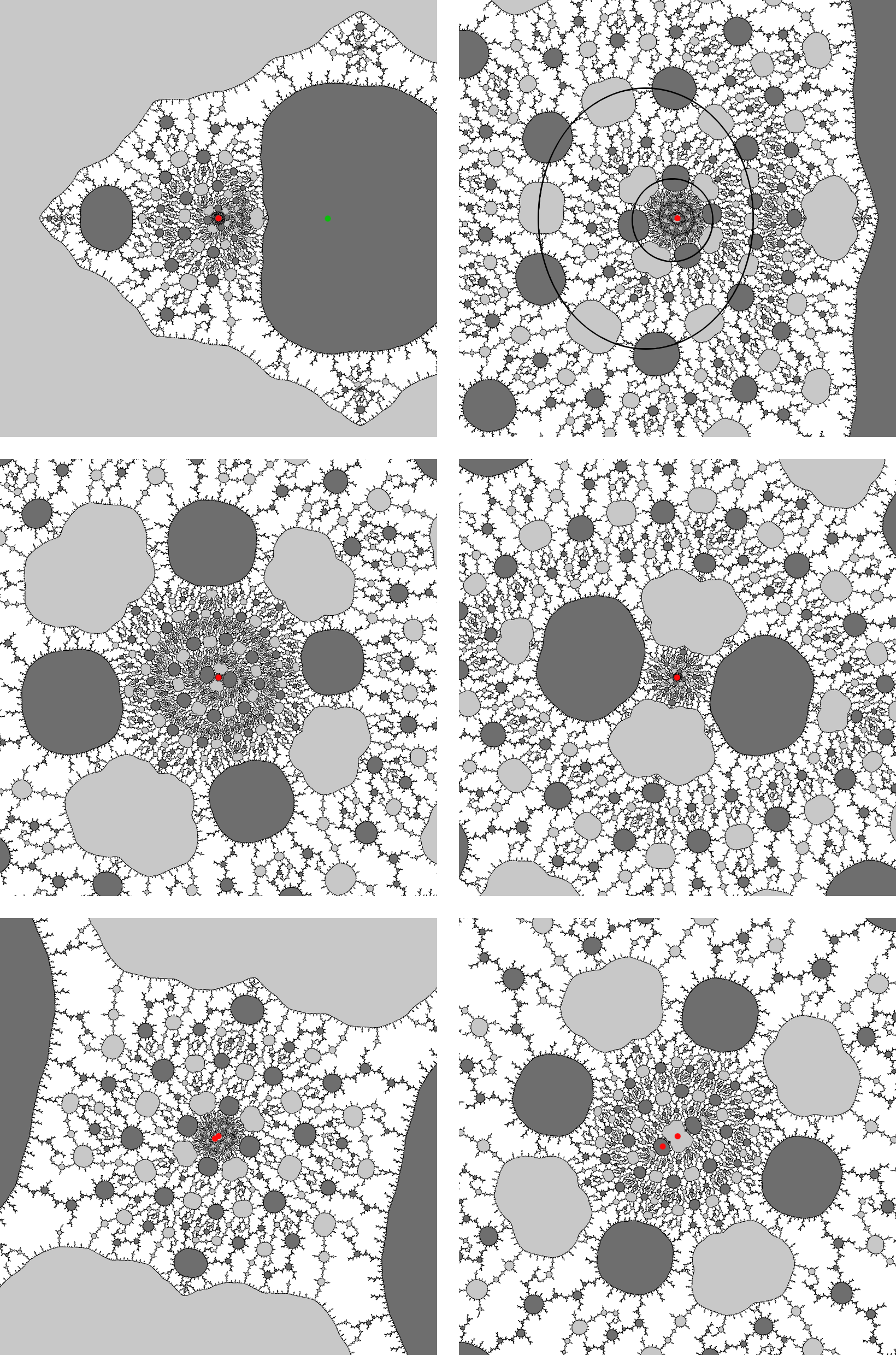}
\caption{A zoom on the messy part. This is a traversal of the tube starting from the middle sphere and ending at the left sphere. In the center of the last picture, we can see two red dots, which are the two marked points in the left sphere. On one picture, we also drew loops linking chains of light and dark gray components, which are approximately iterated square roots of the pictures in the middle sphere for bigger values of R.}
\label{fig:copies}
\end{figure}

\begin{figure}%
\includegraphics[width=9cm]{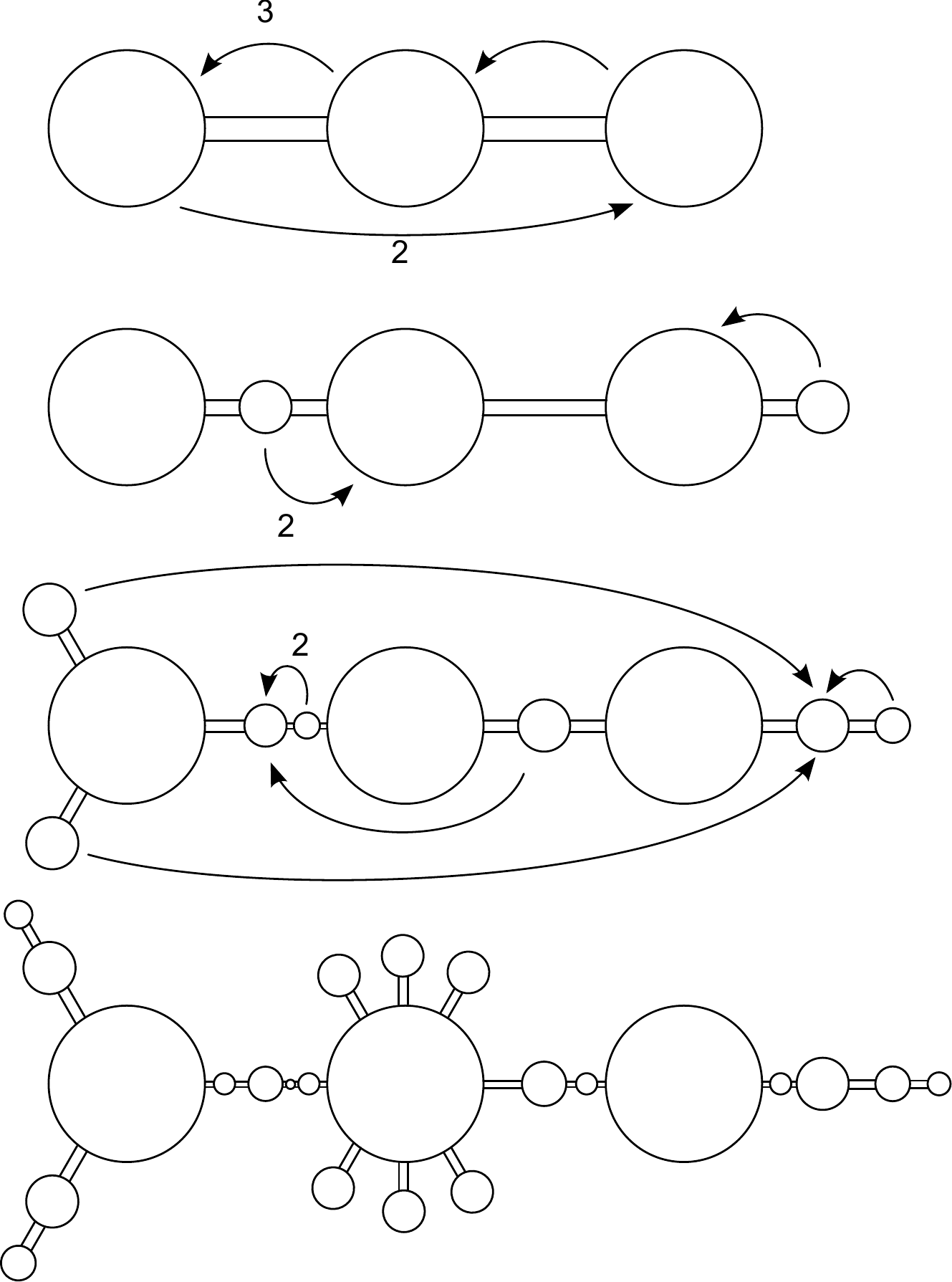}%
\caption{Pulling back the (partially) pinched sphere tree by the maps $\cal F_R$ defines more and more pinchings. The tube between the left and the middle initial spheres gets more and more spheres, some of which separate the two inital spheres.}
\label{fig:tent}
\end{figure}

\begin{figure}%
\includegraphics[width=10cm]{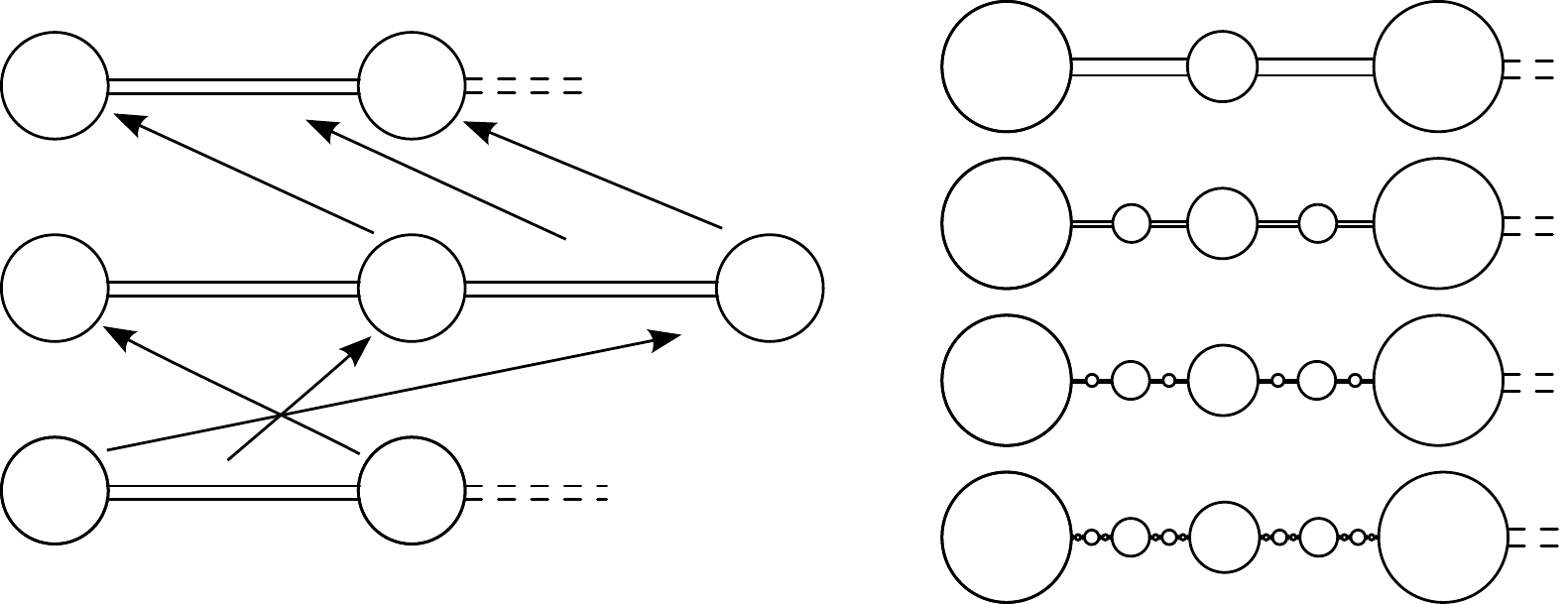}%
\caption{Continutation of Figure~\ref{fig:tent}. The spheres separating the left and middle sphere map $2:1$ to each other by $F_R$ or $F_{R^3}\circ F_R$, and repeating this, eventually to the middle sphere. At the limit these separating spheres could be thought of a countable collection filling the gaps in the complement of a Cantor set.}
\label{fig:aa}
\end{figure}

Up to now we observed the limits of the Julia sets in the three normalizations associated to the way the marked set degenerates.
However, we could look at many other normalizations.
For instance, we may focus on the tube linking the left sphere and the middle one (see Figure~\ref{fig:H2O}). Call it the $a$-tube, and call $b$-tube the one between the middle and right sphere. Since the left sphere is mapped to the right one and the middle to the left, the image of the tube has to span over the middle sphere. Recall that the map $F_R$ is $2:1$ on this tube. When the tube is long (i.e. $R$ is close to $1$), this map is close to the squaring map $z\mapsto z^2$ in appropriate coordinates. This is why we see these square rooted copies of the central sphere image: see Figure~\ref{fig:copies}. Moreover, this get replicated by futher preimages and explains the other rings visible in Figure~\ref{fig:copies}, which are more or less copies of each other by more and more square roots. Let us explain this (see also Figures~\ref{fig:tent} and~\ref{fig:aa}). For a given $R$ close to $1$, the $b$-tube in $\cal S_{R^{1/3}}$ must contain a nearly $1:1$ copy of the $a$-tube for $R$. The square root copy of the central sphere for $R$ found in the $a$-tube for $R^{1/3}$ is therefore also present in the $b$-tube for $R^{1/3^2}$. Now the image of the $a$-tube for $R^{1/3}$ not only spans over the middle sphere but also over the $a$-tube and $b$-tube for $R$: in fact the middle sphere represents a smaller and smaller part as $R\tend 1$. It follows that for each $R$, two fourth root copies of the central sphere must be found on the $a$ tube and two on the $b$ tube: these are roots of the central sphere for $R^{3^k}$ with different valuees of $k$; however all these powers of $R$ are close to $1$ since $R$ is. This goes on as on Figure~\ref{fig:aa} and accounts for all these rings we saw on the zoom.

\begin{figure}%
\includegraphics[width=11.5cm]{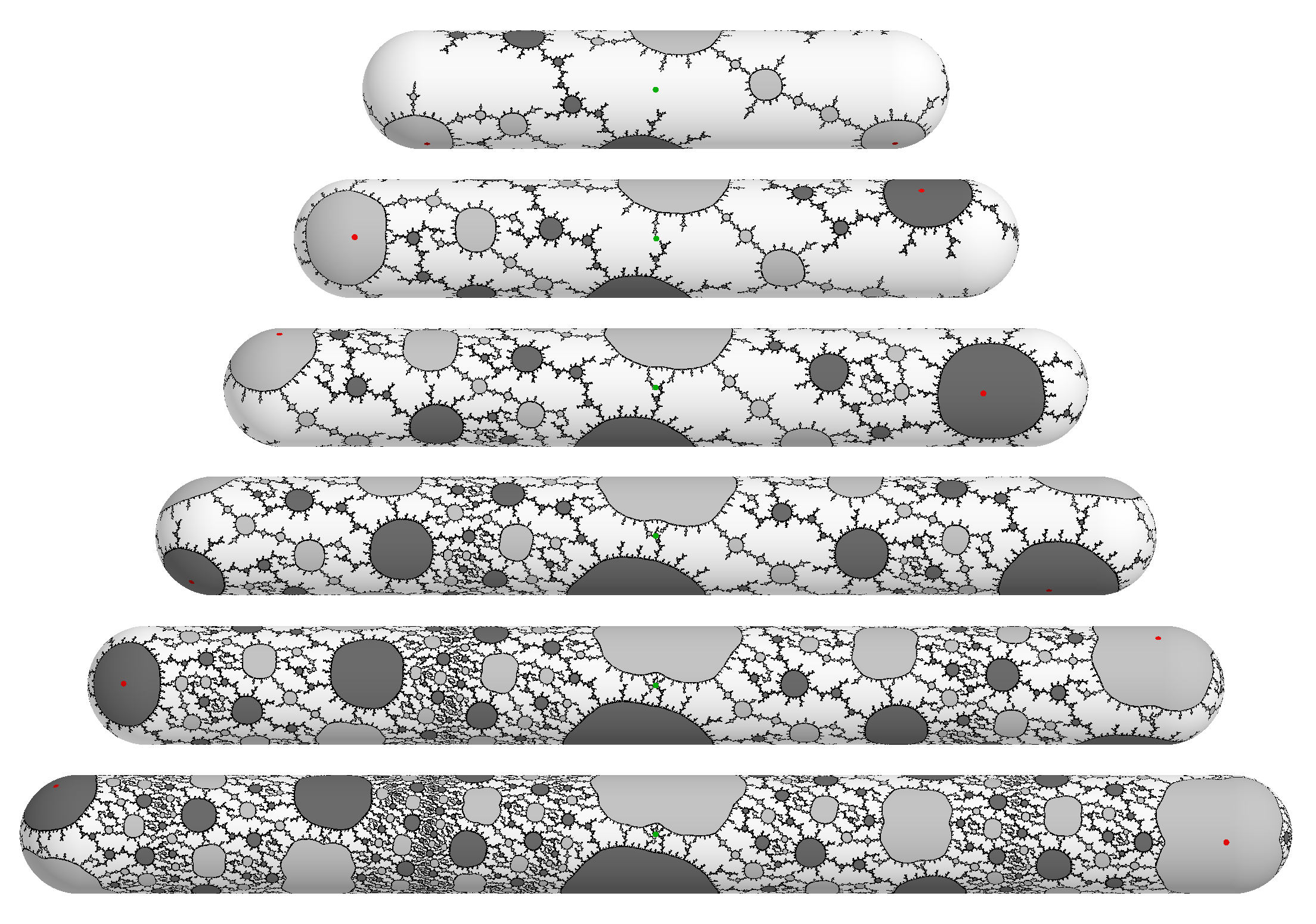}
\\
\includegraphics[width=11.5cm]{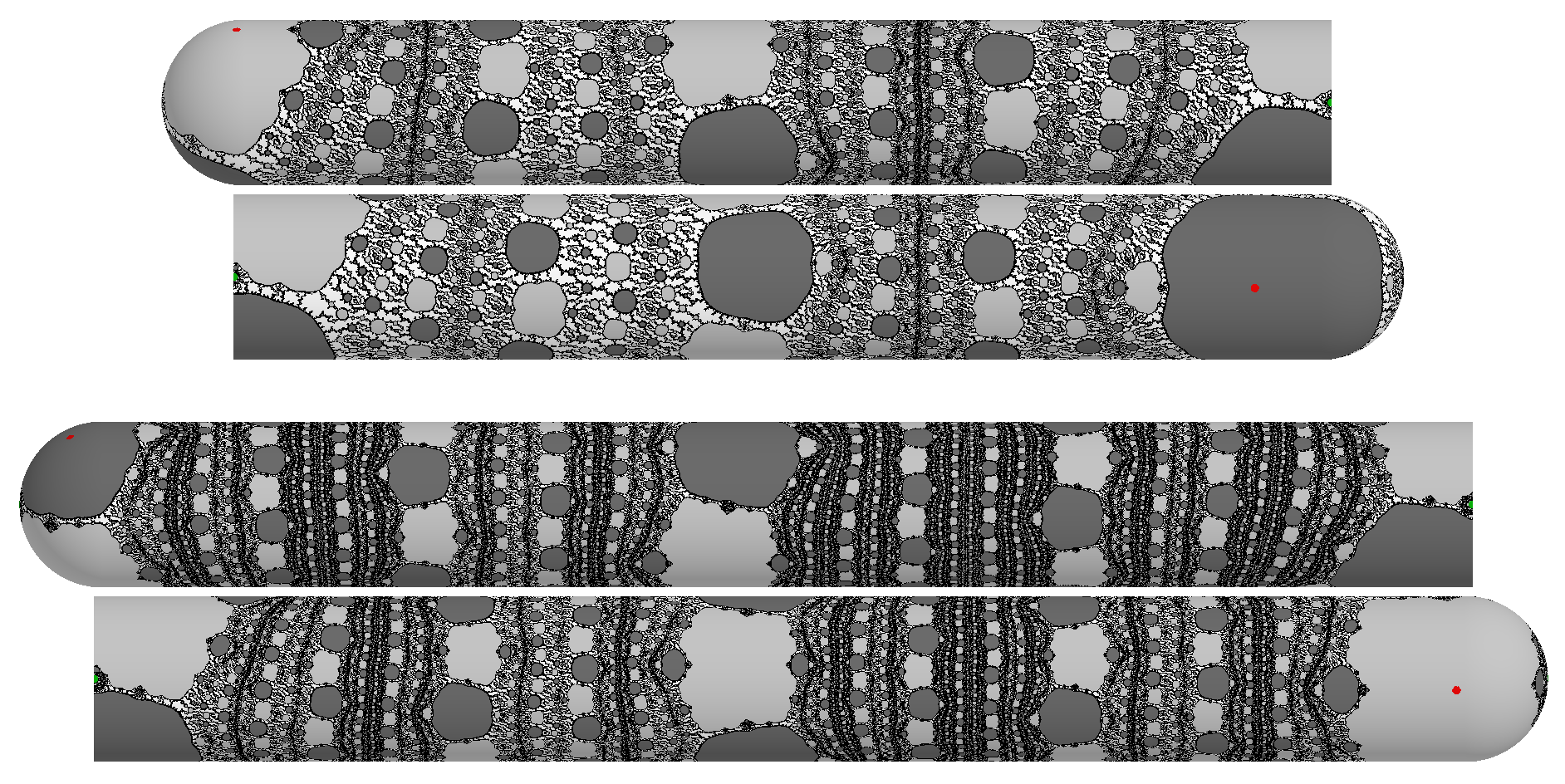}
\caption{Conformally correct pictures of $\cal S_R$ mapped to tubes, with $R=R_n=10^{4/3^n}$ and $n=5$, 6, 7, 8, 9, 10, 13 and 16.  The last two tubes were cut so as to fit in the frame. See the text for a more detailed description.}
\label{fig:tubes}
\end{figure}

On Figure~\ref{fig:tubes} we show another conformal representation of the Riemann surfaces $\cal S_R$. These are sticks consisting of a cylinder capped with two half spheres. The length of the stick increases with $n$ and is chosen so that we have limit pictures at the left, at the right and in the center, that are the limits of the picture in the three main normalizations as on Figure~\ref{fig:parallelMovie}. The chosen normalization here puts the points on the equator with angle $1/3,1/9,0$ respectively to the following points on the stick: the extremal left point, the central point facing the viewer, and the extremal right point. As $n$ increases, you will note that each end has an apparent rotation, while the central part has a non-rotating limit. Note that we also get (rotating) limits at $1/4$ and $3/4$ of the stick, and in fact at each $p/2^q$: these are the aforementioned rings of $2^q$-fold preimages of the central sphere. This stick representation makes the rings and their relationships more visible. As $R\tend 1$, the rings get further and further appart from each other.

If the tunnels on Figure~\ref{fig:tubes} have a length that grows at a linear pace, which seems to be the case, then the rings also separate at a linear pace. The closer the rings, the slower the pace. When the tube is conformally mapped to a geometric sphere, say according to the central sphere normalization, this linear growth becomes an exponential shrinking: each ring in the white basin has a size that shrinks to $0$ exponentially, with rings close to the boundary having a slower rate of convergence. More precisely for $R_n = e^{\textrm{const}/3^n}$, the ring at $p/2^q$ should become visible when $n \geq \on{const} q$ and its diameter should shrink like $e^{-\on{const}np/2^q}$ where $\textrm{const}$ designates different constants.
For each $q$ take the first $n$ when the $p/2^q$ are visible: this makes rings of diameter $e^{-\on{const}pq/2^q}$ for $p$ from $1$ to $q$.
In particular for all $\epsilon>0$, when $q$ gets big, there should be a lot of rings whose diameter is $>\epsilon$ and with $p$ odd, so that these rings are thin. This explains the bigger and bigger density of the Julia set in the white basin as $n$ grows.

\begin{table}
\setlength{\extrarowheight}{2pt}
\setlength{\arraycolsep}{0pt}
\begin{tabular}{|c|c|c|c|c|}
\hline
  $n$ & $R_n$ & $v_n$ & $v_{n}/v_{n-1}$ & $u_n/v_n$
\\ \hline
  2 & 2.78 & $\begin{array}{r}1.15\\ + i0.35\end{array}$ & $\begin{array}{r}0.0943 \\ + i0.0286\end{array}$ & $\begin{array}{r}-2.19 \\ + i0.48\end{array}$
\\ \hline
  3 & 1.40 & $\begin{array}{r}0.081\\ + i0.383\end{array}$ & $\begin{array}{r}0.158 \\ + i0.283\end{array}$ & $\begin{array}{r}-1.49 \\ + i1.33\end{array}$
\\ \hline
  4 & 1.120 & $\begin{array}{r} -0.113\\ + i0.064\end{array}$ & $\begin{array}{r}0.099\\ + i0.316\end{array}$ & $\begin{array}{r} -1.15\\ + i2.03\end{array}$
\\ \hline
  5 & 1.038 & $\begin{array}{r}-0.0324\\ - i0.0230\end{array}$ & $\begin{array}{r}0.129\\ + i0.276\end{array}$ & $\begin{array}{r}-1.202 \\ + i2.390\end{array}$
\\ \hline
  10 & $1\!+\!1.55e^{-4}$ & $\begin{array}{r}-1.16e^{-4}\\ + i0.55e^{-4}\end{array}$ & $\begin{array}{r}0.16016\\ + i0.27763\end{array}$ & $\begin{array}{r} -1.38658\\ + i2.40163\end{array}$
\\ \hline
  15 & $1\!+\!6.41e^{-7}$ & $\begin{array}{r}-0.36e^{-7}\\ + i4.35e^{-7}\end{array}$ & $\begin{array}{r}0.1602506\\ + i0.2775626\end{array}$ & $\begin{array}{r} -1.386716\\ + i2.401863\end{array}$
\\ \hline
  20 & $1\!+\!2.64e^{-9}$ & $\begin{array}{r}1.21e^{-9}\\ + i0.84e^{-9}\end{array}$ & $\begin{array}{r}0.160249937\\ + i0.277561021\end{array}$ & $\begin{array}{r} -1.38672276\\ + i 2.40187419\end{array}$
\\ \hline
  25 & $1\!+\!1.08e^{-11}$ & $\begin{array}{r}4.51e^{-12}\\ - i2.12e^{-12}\end{array}$ & $\begin{array}{r}0.1602499527\\ + i0.2775610603\end{array}$ & $\begin{array}{r}-1.386722542 \\ + i2.401873901\end{array}$
\\ \hline
 30 & $1+4.47e^{-14}$ & $\begin{array}{r}0.14e^{-15}\\ - i1.68e^{-14}\end{array}$ & $\begin{array}{r}0.1602499522\\ + i0.2775610592\end{array}$ & $\begin{array}{r}-1.386722548 \\ + i2.401873910\end{array}$
\\ \hline
\end{tabular}
\caption{Computer experiment. Numbers rounded by truncation.}
\label{tab:exp}
\end{table}

\begin{table}
\setlength{\extrarowheight}{2pt}
\setlength{\arraycolsep}{0pt}
\begin{tabular}{|c|c|}
\hline
  $n$ & $u_n/v_{n-1}$
\\ \hline
  2 & $\begin{array}{r}-0.221  - i0.017\end{array}$
\\ \hline
  3 & $\begin{array}{r}-0.615  - i0.212\end{array}$
\\ \hline
  4 & $\begin{array}{r}-0.759  - i0.162\end{array}$
\\ \hline
  5 & $\begin{array}{r}-0.817  - i0.023\end{array}$
\\ \hline
  10 & $\begin{array}{r}-0.88886361  - i0.00029954\end{array}$
\\ \hline
  15 & $\begin{array}{r}-0.88888972  - i0.00000057\end{array}$
\\ \hline
  20 & $\begin{array}{r}-0.88888889199  - i1.46e^{-9}\end{array}$
\\ \hline
  25 & $\begin{array}{r}-0.88888888888985  - i1.158e^{-11}\end{array}$
\\ \hline
 30 & $\begin{array}{r}-0.8888888888888565  - i2.24e^{-14}\end{array}$
\\ \hline
\end{tabular}
\caption{Computer experiment. Numbers rounded by truncation.}
\label{tab:exp2}
\end{table}

\begin{figure}%
\begin{tikzpicture}
\node at (0,0) {\includegraphics[width=7cm]{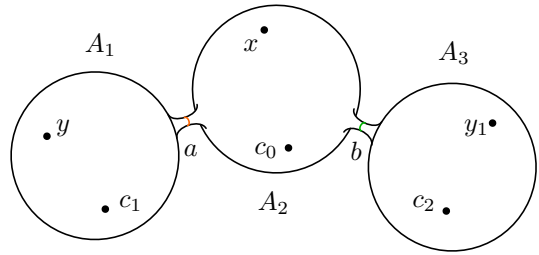}};
\node at (-0.1,-0.3) {$c_0$};
\node at (-1.9,-1) {$c_1$};
\node at (2,-1) {$c_2$};
\node at (-0.3,1.1) {$x$};
\node at (-2.8,0) {$y$};
\node at (2.7,0) {$y_1$};
\node at (-1.1,-0.3) {$a$};
\node at (1.1,-.3) {$b$};
\node at (-2.3,1.1) {$A_1$};
\node at (0,-1) {$A_2$};
\node at (2.4,1) {$A_3$};
\end{tikzpicture}
\caption{Reminder of Figure~\ref{fig:H2O}.}
\end{figure}

To finish, we would like to show a computer experiment measuring the rate at which the marked points group together, which is related to the rate of growth of the tubes, and the rate of shrinking of the geodesics homotopic to the canonical obstruction.
Let us choose our central sphere normalization, where the points labeled $c_0,e_0,x$ are mapped respectively to $0,1,\infty$.
Let $R_n = 10^{4/3^n}$ and $v_n$ be difference of complex coordinates from $y$ to $c_1$ (in the chosen normalization) and $u_n$ from $y_1$ to $c_2$.
On Table~\ref{tab:exp} we list, as a function of $n$ with $R=R_n$, the value of $v_n$, of the rate $v_{n}/v_{n-1}$ and of the ratios $u_n/v_n$.
The ratio $u_n/v_n$ seems to converge: the two bunches of points converge at the same rate, which is coherent with the observation that the middle sphere is near the middle of the tubes on Figure~\ref{fig:tubes} and coherent too with the fact that the tunnel in $\cal S_R$ between the middle and right sphere is a $1:1$ pre image by $F_R$ of the other tunnel for $\cal S_{R^3}$ (growing linearly). The latter remark suggests to look at the limit of the ratio $u_n/v_{n-1}$ and see if it is close to have modulus one: indeed, according to Table~\ref{tab:exp2}, it seems to converge to $-8/9$.
The rate $v_{n}/v_{n-1}$ seems to tend to a complex number
\[\rho\approx0.1602499522+i0.277561059.\]
Putting the square of its norm in Simon Plouffe's inverter\footnote{Available on the Internet at \texttt{http://pi.lacim.uqam.ca/} as of January 2012.} suggests it might be 
\[\rho \overset{\scriptstyle ?}= 2e^{i\pi/3}/3^{5/3}.\]
It shall not come as a surprise that the limit rate of convergence be an algebraic number: it is probably the leading eigenvalue of a well-defined branch of the pull-back map at some fixed point on some stratum of the pinched sphere compatification of the moduli space (or a compactification of the Teichmüller space?). See \cite{Se}.

\begin{question} Compute this eigenvalue and prove it is equal to $\rho$.
\end{question}

Even though from Table~\ref{tab:exp}, $R_n-1$ and $v_n$ seem to decrease to $0$ at the same rate, we expect one to tend slightly faster: $R_n -1 \sim \ln(10)/3^n$ whereas $v_n \sim \on{constant}\rho^n$, and $|\rho|=0.320\ldots \neq 1/3 = 0.333\ldots$

\bibliographystyle{alpha}
\bibliography{forArXiv}

\end{document}